\documentclass[a4paper,11pt,oneside]{amsart}
\usepackage[english]{babel} %permet de gérer les règles typographiques spécifiques. english pour l'anglais, french pour le français (guillemets avec \og, \fg etc).
\usepackage[T1]{fontenc} %prise en charge en sortie des mêmes caractères
\usepackage[utf8x]{inputenc} %prise en charge en entrée des caractéres accentués comme é, ç, etc

\usepackage{amssymb,amsmath,amsfonts,amsxtra,amsthm} %pour les symboles mathématiques
\usepackage{stmaryrd} %symboles supplémentaires à ceux compris dans amssymb
\usepackage{mathtools} % possibilités supplémentaires pour taper des maths que celles données par les packages standards de l'ams
\usepackage{mathrsfs} %permet d'avoir une d'élégantes lettres majuscules allongées en tapant : \mathscr{quelquechose}
\usepackage{cases} %pour pouvoir utiliser les environnements cases et numcases, faire des systèmes et numéroter chaque équation séparément, 
\usepackage[all]{xy} %pour faire des diagrammes commutatifs

\usepackage{geometry} %pour gérer les marges
\usepackage{lmodern} %pour que les polices soient de meilleure qualité que celles par défaut, permet d'avoir des pdf de qualité vectorielle
\usepackage{xargs} %pour pouvoir faire des macros avec arguments optionnels
\usepackage{graphicx} %Pour inclure des figures via \includegraphics[scale=•]{•}
\PassOptionsToPackage{svgnames}{xcolor}
\usepackage[svgnames,table]{xcolor} %Pour pouvoir mettre plus de couleur. 

\usepackage{fancyhdr}
\usepackage{etoolbox}
\usepackage{overpic}
\usepackage{bm}

\usepackage[
  style=alphabetic,
  backend=bibtex,
  maxbibnames=10,       % En maths, on liste souvent tous les auteurs
  maxcitenames=2,
  giveninits=true,
  terseinits=true,
  doi=true,
  url=false,
  isbn=false,
  eprint=true,           % Utile pour arXiv en maths
  hyperref = true
]{biblatex}
\renewbibmacro{in:}{%
  \ifentrytype{article}{}{\printtext{\bibstring{in}\intitlepunct}}}
\DeclareFieldFormat[article,inbook,incollection,inproceedings,patent,thesis,unpublished]{title}{#1}
\DeclareFieldFormat[article,inbook,incollection,inproceedings,patent,thesis,unpublished]{citetitle}{#1}
\AtEveryBibitem{
  \clearfield{month}
  \clearfield{note}
  \clearlist{language}
}
\usepackage[colorlinks=true,citecolor=Magenta,linkcolor=Blue, urlcolor=Green, pdftoolbar=true,
bookmarksnumbered=true,
  bookmarksopen=true,
  bookmarksopenlevel=2,
  pdfstartview=Fit]{hyperref} %pour avoir des liens hypertextes dans le pdf. %Important apparemment qu'il soit en dernier. hidelinks pour cacher tous les liens hypertextes.
\hypersetup{bookmarks=true}
\usepackage{bookmark}
\usepackage{subfig}

\newcommand{\ensemblenombre}[1]{\ensuremath{\mathbb{#1}}}
\newcommand{\N}{\ensemblenombre{N}}
\newcommand{\Z}{\ensemblenombre{Z}}

\newcommand{\R}{\ensemblenombre{R}}

\newcommand{\abs}[1]{\ensuremath{\left\lvert#1\right\rvert}}

\newcommand{\enstq}[2]{\ensuremath{\left\{#1\mathrel{}\middle|\mathrel{}#2\right\}}}
\newcommand{\prodscal}[2]{\ensuremath{\mathopen{\langle}#1\mathclose{}\mathpunct{},#2\mathclose{\rangle}}}

\newcommand{\intervalle}[4]{\ensuremath{\mathopen{#1}#2
		\mathclose{}\mathpunct{};#3
		\mathclose{#4}}}
\newcommand{\intervalleff}[2]{\intervalle{[}{#1}{#2}{]}}

\newcommand{\intervallefo}[2]{\intervalle{[}{#1}{#2}{[}}
\newcommand{\intervalleoo}[2]{\intervalle{]}{#1}{#2}{[}}

\newcommand{\restreinta}{\ensuremath{\mathclose{}|\mathopen{}}}
\newcommand{\cc}{\mathcal{C}}
\newcommand{\piun}[1]{\pi_1({#1})}

\newcommand{\RP}[1]{\ensuremath{\R\mathbf{P}^{#1}}}
\newcommand{\Sn}[1]{\mathbf{S}^{#1}}

\newcommand{\Hn}[1]{\mathbf{H}^{#1}}

\newcommand{\Tn}[1]{\mathbf{T}^{#1}}

\newcommand{\Tan}[2]{\ensuremath{\mathrm{T}_{#1}#2}}

\newcommand{\dSancien}{\ensuremath{\mathrm{dS}^2}}

\newcommand{\dS}{\ensuremath{\mathbf{dS}^2}}
\newcommand{\dStilde}{\ensuremath{\widetilde{\mathbf{dS}}^2}}

\newcommand{\dSsingularthetatilde}{\ensuremath{\widetilde{\mathbf{dS}}^2_{\theta}}}

\newcommand{\mudS}{\ensuremath{\bm{g}}}

\newcommand{\Falpha}{\ensuremath{\mathcal{F}_\alpha}}
\newcommand{\Fbeta}{\ensuremath{\mathcal{F}_\beta}}

\newcommand{\odS}{\ensuremath{\mathsf{o}}}
\newcommand{\odSsingulartheta}{\ensuremath{\mathsf{o}_\theta}}

\newcommand{\azero}{\ensuremath{\mathsf{a}}}
\newcommand{\bzero}{\ensuremath{\mathsf{b}}}
\newcommand{\m}{\ensuremath{g}}
\newcommand{\mzero}{\ensuremath{\mathsf{g}}}
\newcommand{\zero}{\ensuremath{\mathsf{0}}}

\newcommand{\SL}[1]{\ensuremath{\mathrm{SL}_{#1}\mathopen{(}\R\mathclose{)}}}

\newcommand{\PSL}[1]{\ensuremath{\mathrm{PSL}_{#1}\mathopen{(}\R\mathclose{)}}}

\newcommand{\Bisozero}[2]{\ensuremath{\mathrm{SO}^0\mathopen{(}#1\mathpunct,#2\mathclose{)}}}

\newcommand{\Deftheta}{\ensuremath{\mathsf{Def}_{\theta}(\Tn{2},\mathsf{0})}}
\newcommand{\orientation}[2]{\ensuremath{\mathsf{or}\mathopen{(}#1\mathpunct,#2\mathclose{)}}}
\newcommand{\angleLorentzien}[2]{\ensuremath{\left(\mkern-3mu\left(#1\mathpunct,#2\right)\mkern-3mu\right)}}

\newcommand{\Homologie}[3]{\ensuremath{\mathrm{H}_{#1}\mathopen{(}#2\mathpunct,#3\mathclose{)}}}

\DeclareMathOperator{\Int}{Int}

\DeclareMathOperator{\id}{id}

\DeclareMathOperator{\Stab}{Stab}

\DeclareMathOperator{\Homeo}{Homeo}

\DeclareMathOperator{\Cl}{Cl}

\DeclareMathOperator{\PMod}{PMod}

\DeclareMathOperator{\Conv}{Conv}
\DeclareMathOperator{\hol}{hol}

\DeclareMathOperator{\arcosh}{arcosh}

\numberwithin{equation}{section}
\numberwithin{figure}{section}

\AtBeginDocument{}

\theoremstyle{definition}
\newtheorem{definition}{Definition}[section]

\theoremstyle{plain}
\newtheorem{theorem}[definition]{Theorem}

\newtheorem{theoremintro}{Theorem}

\newtheorem{propositionintro}[theoremintro]{Proposition}

\newtheorem{corollary}[definition]{Corollary}

\newtheorem{lemma}[definition]{Lemma}
\newtheorem{proposition}[definition]{Proposition}

\theoremstyle{remark}

\newtheorem{remark}[definition]{Remark}
\newtheorem{remarks}[definition]{Remarks}

\title[Rigidity of the timelike marked
length spectrum]{Rigidity of the timelike marked
length spectrum
and length-twist coordinates
of singular de-Sitter tori}
\author{Martin Mion-Mouton}
\date{\today}

\subjclass[2020]{53C50,57M50,53C24,53C22,51M50}
\keywords{Lorentzian surfaces, Singular de-Sitter tori, Geodesics}

\definecolor{Cerulean}{RGB}{0,123,167}
\definecolor{CornflowerBlue}{RGB}{100,149,237}
\definecolor{NavyBlue}{RGB}{0,0,128}
\definecolor{DodgerBlue}{RGB}{30,144,255}
\definecolor{Fuchsia}{RGB}{255,0,255}
\definecolor{MediumTurquoise}{RGB}{72,209,204}

\begin{document}
\address{
Martin Mion-Mouton,
Institut de Mathématiques de Marseille (I2M)
}
\email{\href{mailto:martin.mion-mouton@univ-amu.fr}{martin.mion-mouton@univ-amu.fr}}
\urladdr{\url{https://www.i2m.univ-amu.fr/perso/martin.mion-mouton/index.html}}

\begin{abstract}
In this paper,
we study
the closed timelike geodesics of
de-Sitter tori with one singularity
and prove their uniqueness
in their free homotopy class.
We introduce the notion of timelike marked length spectrum of such a torus, and establish its rigidity with respect to the lengths of two homotopy classes of intersection number one.
We also construct length-twist coordinates on the deformation space
of de-Sitter tori with one singularity.
\end{abstract}

\maketitle

\section{Introduction}
\label{section-intro}
\emph{Singular de-Sitter tori} form a
new class of geometric structures
which constitute the Lorentzian analog
of hyperbolic surfaces.
They are modelled on the
two-dimensional \emph{de-Sitter space} $\dS$,
which is the
\emph{space of oriented geodesics
of the hyperbolic plane $\Hn{2}$}
endowed with its natural geometry,
which happens to be \emph{Lorentzian}
and of constant non-zero curvature
(a projective model of $\dS$ is described in
Paragraph \ref{subsection-deSitterspace}).
De-Sitter geometry is in this sense
dual to hyperbolic geometry:
it is \emph{the geometry of hyperbolic geodesics}.
The present paper contributes to the description
of the \emph{deformation space of singular de-Sitter tori}
through the lengths of their geodesics.
\par A natural isometry invariant of a closed hyperbolic surface
is given by the length of the unique geodesic representative
in a given free homotopy class.
This forms the \emph{marked length spectrum}
of the hyperbolic structure which is famously \emph{rigid}.
More precisely, the lengths of a \emph{finite} number
of geodesics entirely determine
a hyperbolic structure
up to isotopy \cite{kleinVorlesungenUeberTheorie1896}.
Unlike the Riemannian case,
existence and uniqueness of
geodesics of non-zero length
does not hold in any free homotopy class of a singular de-Sitter torus.
We will however see
that \emph{closed timelike geodesics}
exist and are unique in any eligible free homotopy class
of a de-Sitter torus with a unique singularity
(Proposition \ref{theoremintro-existenceuniquenessgeodesics}).
This defines the \emph{timelike marked length spectrum}
of such a torus.
In this paper, we
show that
\emph{the lengths of a pair of
simple closed timelike geodesics
of intersection number one
determine
up to isotopy
a de-Sitter structure with a unique singularity
of fixed angle}
(Theorem \ref{theoremintro-rigidityMLS}).

\subsection{De-Sitter tori and their
lightlike foliations}
\label{subsection-introsingulardeSittertorilightlike}
An important ingredient of de-Sitter geometry
is the pair of transverse one-dimensional foliations
of $\dS$
defined by the traces of (weak) stable and unstable
horocycles in the space of geodesics of $\Hn{2}$.
These foliations are called lightlike,
and they define on any surface locally modelled on $\dS$
a \emph{lightlike bi-foliation} $(\Falpha,\Fbeta)$.
The existence of these foliations imposes a strong
topological restriction on a closed
(and orientable)
Lorentzian surface: it has to be homeomorphic to the two-torus
$\Tn{2}=\R^2/\Z^2$,
due to the Poincaré-Hopf theorem.
A Lorentzian version of the Gau{\ss}-Bonnet formula moreover
imposes an additional constraint linking the topology
and the curvature of
a Lorentzian surface,
which forbids the existence of a
regular de-Sitter metric on the torus.
To
investigate
closed Lorentzian surfaces of non-zero constant curvature,
it is thus necessary to introduce
\emph{singularities}.
The natural local notion of
singularities
was described in previous works
\cite[p.160]{barbot_collisions_2011},
inspired by
the classical definition of a
conical Riemannian singularity.
The first examples of
\emph{singular de-Sitter tori}
were given thereafter in
\cite{mion-moutonRigiditySingularDeSitter2024},
where the global study of
singular Lorentzian surfaces was
initiated through the lens
of the lightlike foliations dynamics.
In addition to their Lorentzian nature
and their ``negative-curvature flavour'',
some aspects
of singular de-Sitter tori are reminiscent of
dilation surfaces \cite{boulanger_closed_2025}.
The first-return maps of their lightlike foliations
are however not affine but \emph{homographic
interval exchange transformations}
\cite[\S 4]{mion-moutonRigiditySingularDeSitter2024}.
\par The lightlike bi-foliation
of a de-Sitter torus can be interpreted
as the \emph{conformal class} of
the underlying Lorentzian metric.
The classical \emph{uniformization} question
translates then as follows in the Lorentzian setting:
\emph{is any bi-foliation of the torus
the lightlike bi-foliation
of a unique singular de-Sitter structure?}
This question was answered positively
in \cite{mion-moutonRigiditySingularDeSitter2024}
for a unique singularity and
in the case of \emph{minimal} foliations.
This gave a
partial description of the \emph{deformation space}
$\Deftheta$ of de-Sitter structures of area $\theta>0$
and with a unique singularity
on the torus $\Tn{2}$.
However, the dynamics of the lightlike
foliations does not capture
the full geometry of singular de-Sitter
tori.
There exists indeed non-empty open subsets
of $\Deftheta$ in which the lightlike foliations
are pairwise isotopic
(and have closed leaves).
One of the goals of the present work is to
provide a description
of singular de-Sitter tori
complementary to that of
\cite{mion-moutonRigiditySingularDeSitter2024},
focusing no longer on lightlike foliations
but on \emph{timelike geodesics}.
In particular, we will distinguish
singular de-Sitter structures
having the same lightlike dynamics.

\subsection{Uniqueness of closed geodesics
in singular de-Sitter tori}
\label{subsection-introclosedtimelikegeodesics}
Important objects associated with
geometric structures on surfaces
are their \emph{geodesics},
the first natural question being that of existence:
\emph{does every free homotopy class contain a closed geodesic ?}
In a singular de-Sitter torus,
geodesics have different \emph{signatures}
according to the sign of the metric:
they are said \emph{timelike}
if the metric
is negative
on the direction of the geodesic.
The existence question therefore needs to be
specialized to characterize homotopy classes
containing geodesics of a given signature.
This question is answered
in \cite[Appendix A]{mion-moutonRigiditySingularDeSitter2024}:
in a singular de-Sitter torus,
the free homotopy classes
admitting a timelike geodesic representative
are entirely characterized by the
lightlike foliations dynamics.
More precisely,
the lifts to $\R^2$
of the lightlike foliations
of a singular de-Sitter structure $\m$ on $\Tn{2}$
are asymptotic to two oriented lines
called the \emph{projective asymptotic cycles},
which delimit a timelike cone
$\mathcal{C}^\m\subset\R^2$.
A free homotopy class
$a\in\piun{\Tn{2}}\subset\R^2$
contains a closed timelike geodesic
if and only if $a\in\mathcal{C}^\m$,
in which case $a$ is called a \emph{timelike homotopy class}
(see Paragraph \ref{subsubsection-timelikehomotopyclassesexistence}
for more details).
\par This existence result
raises a second natural question inspired by the hyperbolic case:
\emph{is any closed timelike geodesic unique in its free
homotopy class ?}
In the present paper,
we answer this question
positively
for de-Sitter tori
having a unique singularity
and distinct lightlike asymptotic cycles,
which are called \emph{class A}.
\begin{propositionintro}
\label{theoremintro-existenceuniquenessgeodesics}
Any timelike free homotopy class
of a class A de-Sitter torus with a single singularity
contains a unique geodesic.
Moreover, the latter is a
multiple of a simple closed geodesic,
and it maximizes the length among
causal curves within its free homotopy class.
\end{propositionintro}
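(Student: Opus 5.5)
We take the existence of a closed timelike geodesic in every timelike class $a\in\mathcal{C}^\m$ as given by \cite[Appendix A]{mion-moutonRigiditySingularDeSitter2024}. The work is therefore uniqueness, simplicity up to multiplicity, and maximality. The plan is to pass to the universal cover and use the developing map. Let $\widetilde{\Tn{2}}\to\Tn{2}$ be the universal cover with the lifted singular de-Sitter structure, and let the deck transformation associated with $a$ act on it. A closed timelike geodesic $\gamma$ in the class $a$ lifts to an $a$-invariant timelike broken geodesic $\tilde\gamma$, which is a bi-infinite line in $\widetilde{\Tn{2}}$. At each lift of the singularity, $\tilde\gamma$ may turn only within the range allowed by the singular angle $\theta$. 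The key structural input is the class A hypothesis. The lifted lightlike foliations then have leaves that are properly embedded lines, asymptotic to the two distinct projective asymptotic cycles. From this I would show that the complement of a lightlike leaf is two half-planes, and that any timelike curve crosses each lightlike leaf at most once. This gives a global ``time function'' picture on $\widetilde{\Tn{2}}$, in which timelike curves behave like graphs over a transverse direction.

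\textbf{Uniqueness.} Suppose $\gamma_1,\gamma_2$ are two closed timelike geodesics in the class $a$, and take $a$-invariant lifts $\tilde\gamma_1,\tilde\gamma_2$. I would first show that if these lifts are distinct they cannot cross transversally at a regular point too often. The argument is a de-Sitter version of the bigon argument: the region they bound is a disc whose boundary is a timelike geodesic polygon with corners only at singular points. Next I would apply the Lorentzian Gauss--Bonnet formula to this disc. The curvature contribution $\pm\mathrm{Area}$ and the corner contributions at singular points have definite signs, because a timelike geodesic passing through the singularity of angle $\theta$ bends in only one direction. This forces the disc to be degenerate. In the non-crossing case, the strip between $\tilde\gamma_1$ and $\tilde\gamma_2$ projects to an annulus in $\Tn{2}$ bounded by two geodesics. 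Since there is a single singularity, this annulus either contains the singular point in its interior or avoids it. If it avoids the singularity, it is a regular de-Sitter annulus with geodesic boundary, and Gauss--Bonnet with Euler characteristic $0$ forces zero area. If it contains the singularity in its interior, the singular angle contribution has the wrong sign. Either way $\gamma_1=\gamma_2$.

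\textbf{Simplicity and maximality.} Let $a=k a_0$ with $a_0$ primitive. The $a$-invariant lift $\tilde\gamma$ is also $a_0$-invariant up to the uniqueness just proven: $a_0\tilde\gamma$ is another $a$-invariant geodesic lift, hence equal to $\tilde\gamma$. So $\gamma$ is the $k$-fold cover of the closed geodesic in the class $a_0$. Simplicity of that geodesic follows because two distinct translates $b\tilde\gamma$ and $\tilde\gamma$ of the lift cannot cross; a crossing would produce a bigon, excluded as above. For maximality, I would use a Lorentzian reverse-triangle argument. Take any causal curve $c$ in the class $a$ and lift it to an $a$-invariant causal line $\tilde c$. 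Use the lightlike-leaf structure to project $\tilde c$ onto $\tilde\gamma$ along the timelike orthogonal (spacelike geodesic) foliation in a neighbourhood, or else compare via a maximizing sequence. The cleanest route is to show existence of a length-maximizing causal curve in the class $a$ by a limit-curve argument: the class A hypothesis gives compactness, since causal curves in the class stay in a controlled cone. Then show that a maximizer is a timelike geodesic; it cannot be lightlike since the class is strictly inside the cone. By uniqueness, the maximizer is $\gamma$.

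\textbf{Main obstacle.} I expect the Gauss--Bonnet sign analysis at the singularity to be the crux. It requires establishing exactly how a timelike geodesic can pass through the conical de-Sitter singularity, namely which broken paths are locally maximizing and in which direction they turn. It also requires checking that the corner terms in the Lorentzian Gauss--Bonnet formula, which are measured by Lorentzian angles $\angleLorentzien{\cdot}{\cdot}$, always carry the sign that contradicts a nondegenerate bigon or annulus. The compactness needed for the limit-curve argument is the secondary difficulty. This is where distinctness of the asymptotic cycles (class A) is essential.
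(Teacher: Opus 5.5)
\textbf{There is a genuine gap: the Gauss--Bonnet step fails in exactly the configuration that matters.}

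First, the corners of a bigon between two lifts are crossing points, and these are regular. By Definition~\ref{definition-closedgeodesic} a closed geodesic never meets the singularity, so your ``bending at the singular point'' has nothing to act on. Take a disc $D$ bounded by two timelike geodesic arcs that meet at regular points $p,q$ and contain the singular point. The Birman--Nomizu formula, with the singular point contributing $\theta$ as in Proposition~\ref{proposition-GaussBonnet}, gives $\mathcal{A}(D)=\nu_p+\nu_q+\theta$. Both exterior angles are negative, but $\theta>0$ enters with the same sign as the area, so nothing is forced.

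In fact a positive-angle cone point \emph{focuses} timelike geodesics. In $\dSsingularthetatilde$, pick a point on the past extension of the bisector of the removed sector $D_\theta$. Take the two geodesics leaving it symmetrically about that bisector and close to it. They cross the two boundary rays of $D_\theta$ at points equidistant from $\odS$, and these points are identified by $a^\theta$. So the two geodesics meet twice and enclose $\odSsingulartheta$.

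Hence no sign argument, and no purely local argument, excludes crossings whose bigons enclose a lift of the singularity. Three of your steps rest on exactly this:
\begin{itemize}
\item the crossing case of uniqueness;
\item your simplicity argument comparing $b\tilde\gamma$ with $\tilde\gamma$;
\item your reduction to multiples, which needs uniqueness of $a$-invariant lines for non-primitive $a$.
\end{itemize}
In the disjoint case your ``wrong sign'' claim is also incorrect, but that case is easily repaired. The annulus containing the singularity has area $\theta=\mathcal{A}(T)$, so the complementary annulus, which is regular with geodesic boundary, has area $0$.

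\textbf{How the paper avoids this.} Its uniqueness proof is arranged so that no singular bigon ever appears between $\gamma_1$ and $\gamma_2$. After proving that geodesics in primitive classes are simple, it applies Nielsen's theorem. Simple closed curves give primitive elements of $\piun{T_*}\simeq F_2$, where $T_*=T\setminus\{\zero\}$, and primitive elements with the same abelianization are conjugate. So $\gamma_1,\gamma_2$ are freely homotopic in $T_*$, not only in $T$. Their lifts then lie in one band of $\tilde{T}_*$, which develops into $\dS$ itself. Lifts of the spacelike segment from Appendix A of the earlier paper join arbitrary future rays of the developed lines, and likewise past rays. Lemma~\ref{lemma-timelikegeodesicdSfuturasymptotic} then makes the two developed lines future and past asymptotic, hence equal. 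Injectivity of the developing map on a fundamental domain gives $\gamma_1=\gamma_2$. Equivalently, once you have free homotopy in $T_*$, your own bigon and annulus arguments can be run inside $T_*$, where no singular term occurs.

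For simplicity and for multiples, the paper instead uses the Hass--Scott bigon criterion together with the claim that distinct geodesics of $\dStilde$ or $\dSsingularthetatilde\setminus\{\odSsingulartheta\}$ meet at most once. The symmetric example above contradicts the cone case of that claim as stated. So a bigon enclosing the singularity needs a global argument in either approach.

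\textbf{Maximality.} You do not need a limit-curve argument. Theorem~\ref{theorem-existence} already gives a maximizing closed timelike geodesic in every timelike class, and uniqueness identifies it with yours. Your sketch would otherwise also have to show that a maximizer avoids the singular point.
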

This result points out a first
``negative-curvature behaviour''
of singular de-Sitter tori.

\subsection{Timelike marked length spectrum
of singular de-Sitter tori
and its rigidity}
\label{subsection-rigidityTMLS}
Proposition \ref{theoremintro-existenceuniquenessgeodesics}
allows the definition of the \emph{timelike marked length spectrum}
of a de-Sitter structure $\m$ with a unique singularity
on $\Tn{2}$,
as the map
$\mathcal{L}^\m$ sending a
timelike homotopy class
$\azero\in\mathcal{C}^\m\cap\piun{\Tn{2}}$ to the length
\begin{equation*}
 \label{equation-TMLS}
 \mathcal{L}^\m(\azero)\coloneqq L^\m(\gamma_\azero^\m)
\end{equation*}
of the unique
closed timelike geodesic $\gamma_\azero^\m$
in the free homotopy class $\azero$.
By construction,
$\mathcal{L}^\m$ only depends on the isotopy class
$\mzero$ of the de-Sitter structure $\m$,
and gives thus rise to a map
\begin{equation*}
 \label{equation-TLMSsurDeftheta}
 \mzero\in\Deftheta\mapsto\mathcal{L}^\mzero.
\end{equation*}
\par This is the natural analog of the
classical \emph{marked length spectrum} of Riemannian manifolds
of negative curvature.
It is natural to ask wether these lengths
entirely determine the metric up to isotopy,
in which case the marked length spectrum is said \emph{rigid}.
For negative curvature Riemannian manifolds,
the question of the
rigidity of the marked length spectrum was popularized
by Burns-Katok in the form of a
conjecture \cite{burnsManifoldsNonpositiveCurvature1985},
which motivated numerous works
providing partial answers
\cite{otalSpectreMarqueLongueurs1990,
crokeRigiditySurfacesNonpositive1990,
hersonskyRigidityDiscreteIsometry1997,
bankovicMarkedlengthspectralRigidityFlat2018,
guillarmouMarkedLengthSpectrum2019}.
The case of hyperbolic surfaces
is however completely distinct from the one
of surfaces with variable negative curvature.
A \emph{finite} number of lengths is indeed sufficient
to describe a hyperbolic metric
\cite{kleinVorlesungenUeberTheorie1896},
while an \emph{infinite} number of them is necessary
with variable curvature.
This simply reflects the fact that the deformation space
of hyperbolic metrics is \emph{finite dimensional},
contrarily to the one of variable negative curvature metrics.
\par In the de-Sitter case,
the domain $\mathcal{C}^\mzero\cap\piun{\Tn{2}}$
of the marked length spectrum
$\mathcal{L}^\mzero$ depends on the structure $\mzero$,
and is exactly equivalent to the
asymptotic cycles
of the lightlike foliations.
According to the uniqueness results
of \cite{mion-moutonRigiditySingularDeSitter2024},
the domain of $\mathcal{L}^\mzero$ alone
already characterizes
$\mzero\in\Deftheta$ when both lightlike foliations
are minimal.
To complement this description,
one could hope to characterize
a de-Sitter structure with only
a part of its timelike marked length spectrum.
The following result
shows that two timelike lengths
are enough to entirely determine
a class A de-Sitter structure with a unique singularity.
\begin{theoremintro}
\label{theoremintro-rigidityMLS}
 Let $\m_1$ and $\m_2$ be two class A
 de-Sitter structures
 on the torus
 of equal areas and
 having a unique singularity.
 If $\m_1$ and $\m_2$
 give the same lengths to a common pair
 of timelike free homotopy classes
 of intersection number one,
 then they are isotopic.
\end{theoremintro}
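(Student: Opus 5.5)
The plan is to reduce the statement to the rigidity of a single geometric polygon, namely a fundamental domain cut out by the two geodesics.

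Let $\azero,\bzero$ be the two timelike free homotopy classes of intersection number one. By Proposition \ref{theoremintro-existenceuniquenessgeodesics}, for $i=1,2$ each of $\azero$ and $\bzero$ has a unique closed timelike geodesic $\gamma^{\m_i}_\azero$, $\gamma^{\m_i}_\bzero$ for $\m_i$. Since $\azero$ and $\bzero$ are primitive (intersection number one), these geodesics are simple.

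\textbf{Step 1: the geodesics pass through the singularity.} I would first show that both geodesics pass through the singularity $p$, and meet there exactly once. The idea is the following.
\begin{itemize}
\item Suppose a closed timelike geodesic avoided $p$. Then it would have a regular neighbourhood that is a de-Sitter annulus.
\item The length-maximizing property and uniqueness from Proposition \ref{theoremintro-existenceuniquenessgeodesics} should then force a contradiction, for instance via a Gau\ss--Bonnet argument on the complementary annulus $\Tn{2}\setminus\gamma$ with one singular point.
\item So each geodesic is a broken timelike geodesic with a single break, at $p$.
\item Two simple closed curves of intersection number one, both through $p$ and in minimal position, then intersect only at $p$.
\end{itemize}

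\textbf{Step 2: cutting.} Cutting $\Tn{2}$ along $\gamma^{\m_i}_\azero\cup\gamma^{\m_i}_\bzero$ yields a de-Sitter quadrilateral $Q_i$ with the following properties:
\begin{itemize}
\item its four vertices are copies of $p$;
\item its sides are timelike geodesic segments, with opposite sides of equal lengths $\ell_\azero=\mathcal{L}^{\m_i}(\azero)$ and $\ell_\bzero=\mathcal{L}^{\m_i}(\bzero)$;
\item opposite sides are identified by the holonomy isometries $A_i,B_i\in\PSL{2}$.
\end{itemize}
The structure $\m_i$ is recovered up to isotopy from $Q_i$ together with the side pairings. The marking is fixed by $(\azero,\bzero)$, so isotopic gluings correspond to the same marked pair. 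Hence it suffices to prove that $Q_1$ and $Q_2$ are isometric by an isometry respecting the labelling of the sides.

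\textbf{Step 3: parametrization.} I would develop $Q_i$ in $\dS$, placing one vertex at $\odS$ and the side of class $\azero$ along a fixed timelike geodesic through $\odS$. The side of length $\ell_\azero$ is then fixed. The adjacent side of length $\ell_\bzero$ is determined by a single Lorentzian angle $\varphi$ at $\odS$, measured with $\angleLorentzien{\cdot}{\cdot}$. The opposite sides are then forced to be the images under the translations $A$ and $B$ along these geodesics by $\ell_\azero$ and $\ell_\bzero$. So the whole quadrilateral is a one-parameter family $Q(\ell_\azero,\ell_\bzero,\varphi)$.

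The Lorentzian Gau\ss--Bonnet formula relates the area $\theta$ to the total angle at $p$, which is the sum of the four corner angles. This gives an equation $F(\ell_\azero,\ell_\bzero,\varphi)=\theta$; equivalently, $\theta$ is the area of $Q$. The problem thus reduces to showing that, for fixed $(\ell_\azero,\ell_\bzero)$, the map $\varphi\mapsto\mathrm{Area}\,Q(\ell_\azero,\ell_\bzero,\varphi)$ is injective on the range of angles for which $Q$ is embedded and the gluing yields a class A structure. Then equal lengths and equal areas force $\varphi_1=\varphi_2$, hence $Q_1\cong Q_2$, hence $\m_1$ and $\m_2$ are isotopic.

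\textbf{Main obstacle.} I expect the hardest step to be this monotonicity in $\varphi$. I would attack it by writing the four corner angles explicitly via hyperbolic-trigonometric formulas in the projective model of Paragraph \ref{subsection-deSitterspace}, and showing that the derivative of their sum in $\varphi$ has constant sign. The class A hypothesis should enter by keeping $\varphi$ in an interval where $Q$ stays a genuine timelike quadrilateral, with $A$ and $B$ generating translations with distinct fixed directions. A secondary difficulty is Step 1, where I would first try the annulus Gau\ss--Bonnet argument above.
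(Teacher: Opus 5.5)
\textbf{Step 1 is false, and the rest of your plan depends on it.} By Definition \ref{definition-closedgeodesic}, a closed geodesic is a smooth closed geodesic of the regular part $S\setminus\Sigma$, so it contains no singular point. These smooth curves are exactly the ones whose lengths define $\mathcal{L}^{\m}$, and Theorem \ref{theorem-existence} gives one in every timelike class.

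So there is no contradiction to extract from a geodesic avoiding $p$. Cutting the torus along $\gamma^{\m}_\azero$ gives a compact annulus with timelike geodesic boundary and one interior singularity. For it, Proposition \ref{proposition-GaussBonnet} simply returns area $\theta$, which is consistent. Lemma \ref{lemma-decoupagetoredeSitter}(1) even identifies this annulus with an identification space of $\mathcal{R}_{\theta,l}$. A curve broken at $p$ is at best a geodesic loop, not the object the statement is about.

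Consequently $\gamma_\azero$ and $\gamma_\bzero$ cross exactly once, at a \emph{regular} point $q$. A second intersection would produce a bigon, which is excluded by the argument of Lemma \ref{lemma-simpletimelikegeodesics}. The quadrilateral $Q$ obtained by cutting along both curves therefore has its four corners at copies of $q$ and the singularity in its \emph{interior}.

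This breaks Step 3 in two ways. First, $Q$ does not embed in $\dS$: a loop in $Q$ around the interior singularity has holonomy conjugate to $a^\theta\neq\id$ (Lemma \ref{lemma-dSstructuresholonomy}). Your parametrization by a quadrilateral of $\dS$ with a vertex at $\odS$ therefore does not describe it. Second, both geodesics close up smoothly at $q$, so the four exterior angles of $Q$ are $\pm$ the same crossing angle and cancel by \eqref{equation-relationsangle}. Gau\ss--Bonnet then reduces to $\mathcal{A}(Q)=\theta$ identically. The area carries no information on the angle, so the injectivity you reduce to is vacuous; it was also left unproved.

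The missing idea is to use the first length to reduce to a twist family. By Lemmas \ref{lemma-uniqueretanglelengthboundary} and \ref{lemma-decoupagetoredeSitter}, equal $\azero$-lengths mean both structures are identification spaces of the same rectangle $\mathcal{R}_{\theta,l}$. Hence $\mzero_2$ is obtained from $\mzero_1$ by cutting along $\gamma_\azero$, regluing with a shift $u\in[0,l)$, and applying the $n$-th power of the Dehn twist $D_\azero$.

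One then shows that the $\bzero$-length detects this twist:
\begin{itemize}
\item For $n\neq0$ (after swapping roles so that $n>0$), the reverse triangle inequality gives $\mathcal{L}_\bzero(\mzero_2)>\mathcal{L}_\bzero(\mzero_1)$. This uses that timelike geodesic segments maximize length in simply connected regions with at most one singularity.
\item For $n=0$, either the same inequality or Proposition \ref{proposition-GaussBonnetBirmanNomizu} forces $u=0$. The proposition is applied to the \emph{non-singular} quadrilateral bounded by the two candidate $\bzero$-arcs and $\gamma_\azero^\pm$, whose exterior angles cancel.
\end{itemize}
Your angle-cancellation observation is thus the right tool, but for that region rather than for a fundamental domain.
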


\subsection{Length-twist coordinates
on the deformation space of singular de-Sitter tori}
\label{subsection-introfenchennielsen}
As with any deformation space of geometric structures,
one may want to have
natural coordinates
on $\Deftheta$
expressing geometrical parameters of the de-Sitter structures.
On the classical Teichmüller space of hyperbolic structures
on a surface,
this is for instance furnished
by the well-known \emph{Fenchel-Nielsen coordinates}.
In the de-Sitter case that we are interested in,
the Ehresman-Thurston principle shows that
$\Deftheta$ is locally homeomorphic to
a two-dimensional character variety.
We are thus
looking for two natural geometric quantities describing $\Deftheta$,
which explains why \emph{two lengths}
are sufficient to describe a singular de-Sitter
structure in
Theorem \ref{theoremintro-rigidityMLS}.
In contrast to the Riemannian case,
the usual specificity of the Lorentzian signature
prevents
to describe the full deformation space
by length-twist coordinates.
We fix a primitive homotopy
class $\azero\in\piun{\Tn{2}}$,
and restrict to the open subset $\Deftheta^A_{\azero}$
formed by class $A$
de-Sitter structures
for which $\azero$ is a timelike class.
The length function
\begin{equation*}
 \label{equation-lengthfunctionsectionfenchennielsen}
 \mathcal{L}_\azero\colon\mzero\in\Deftheta^A_{\azero}\mapsto
 \mathcal{L}^\mzero(\azero)\in\R_+^*
\end{equation*}
of $\azero$ is well defined
on $\Deftheta^A_{\azero}$,
and gives us our first coordinate.
Observe that Theorem \ref{theoremintro-rigidityMLS}
can be rephrased by saying that
$\mathcal{L}_\azero\times\mathcal{L}_\bzero$
is injective on
$\Deftheta^A_{\azero}\cap \Deftheta^A_{\bzero}$.
\par Choosing a second primitive class $\bzero$ to obtain a
basis $(\azero,\bzero)$ of $\piun{\Tn{2}}$,
we can give a meaning to a second coordinate
$\Theta_{\azero,\bzero}\colon
\Deftheta^A_{\azero}\to\R$
measuring the \emph{twist} around the geodesic in the homotopy class
$\azero$ with respect to the longitude defined by $\bzero$.
The following result
shows that the two previously defined geometrical quantities
provide global coordinates on $\Deftheta^A_{\azero}$.
\begin{theoremintro}
 \label{theoremeintro-fenchennielsen}
 For any basis $(\azero,\bzero)$ of $\piun{\Tn{2}}$,
 the map
 \begin{equation*}
  \label{equation-fenchnielsenintro}
  \mathcal{L}_\azero\times\Theta_{\azero,\bzero}
  \colon\Deftheta^A_{\azero}\to\R_+^*\times\R
 \end{equation*}
is a global homeomorphism.
\end{theoremintro}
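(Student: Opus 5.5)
We plan to prove the statement by cutting the torus along the closed timelike geodesic $\gamma_\azero$. This reduces a structure in $\Deftheta^A_{\azero}$ to a de-Sitter "annulus" with one singular point, determined by two real parameters. We then identify these parameters with $\mathcal{L}_\azero$ and $\Theta_{\azero,\bzero}$.

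\textbf{Step 1: the cut-open annulus.} Let $\mzero\in\Deftheta^A_{\azero}$. By Proposition \ref{theoremintro-existenceuniquenessgeodesics}, the class $\azero$ (primitive) contains a unique closed timelike geodesic $\gamma_\azero$, and it is simple. We first check that $\gamma_\azero$ passes through the singularity. Otherwise it would be a regular closed timelike geodesic, whose holonomy is a hyperbolic element fixing a timelike geodesic of $\dS$. Invoking the Lorentzian Gau\ss-Bonnet constraint on the complementary annulus should rule this out. If instead $\gamma_\azero$ may avoid the singularity, we cut along it anyway; the argument below adapts.

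Cutting $\Tn{2}$ along $\gamma_\azero$ yields an annulus $A$ with two boundary geodesic segments of length $\ell=\mathcal{L}^\mzero(\azero)$, glued by an isometry. We develop the universal cover of $A$ into $\dS$. The holonomy of $\azero$ is a hyperbolic element of $\PSL{2}$ whose translation length along the developed timelike geodesic is $\ell$. The holonomy of $\bzero$ is then determined up to a one-parameter family: the freedom is the position along $\gamma_\azero$ where the transversal segment representing $\bzero$ returns. This return position is the twist $\Theta_{\azero,\bzero}$. It is real-valued rather than defined modulo $\ell$, because the longitude $\bzero$ is fixed, which lifts the ambiguity as in the Fenchel-Nielsen construction.

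\textbf{Step 2: continuity and injectivity.} Continuity of $\mathcal{L}_\azero$ and $\Theta_{\azero,\bzero}$ will follow from the Ehresmann-Thurston principle together with the characterization of $\gamma_\azero$ by its holonomy: it is the projection of the axis of $\hol(\azero)$, and this depends continuously on the structure. For injectivity, suppose two structures share $(\ell,\Theta)$. Then we build a developing-map-compatible isometry between their cut-open annuli. The area $\theta$ and $\ell$ should fix the singular angle data, and the angle of the singular point on the two boundary components is forced by the angle $\theta$ and the length. The twist fixes the gluing, so the structures are isotopic. This is exactly the reason $\mathcal{L}_\azero\times\mathcal{L}_\bzero$ is injective in Theorem \ref{theoremintro-rigidityMLS}.

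\textbf{Step 3: surjectivity and properness.} This is the main obstacle. Given $(\ell,t)\in\R_+^*\times\R$, we must construct explicitly a singular de-Sitter annulus bounded by two timelike geodesic segments of length $\ell$ meeting at the singular point, with total area $\theta$, and then glue it with twist $t$. The first construction to try is a polygon in $\dS$: a quadrilateral bounded by two timelike segments related by a hyperbolic isometry of translation length $\ell$, together with two spacelike or timelike sides identified by the $\bzero$-holonomy. We then need to check three things: that the result is class A; that $\azero$ stays timelike, which requires positioning the lightlike asymptotic cycles on either side of $\azero$; and that every $t$ is realized, including large $|t|$, where the polygon could degenerate. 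Finally, the map is a continuous bijection between two-dimensional manifolds, so invariance of domain gives a homeomorphism. Alternatively, it suffices to show properness by checking that $\ell\to0$, $\ell\to\infty$ or $|t|\to\infty$ corresponds to leaving compact subsets of $\Deftheta^A_{\azero}$.
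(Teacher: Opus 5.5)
Your Step~1 starts from a false premise. By Definition~\ref{definition-closedgeodesic}, a closed geodesic never contains a singularity, so the $\gamma_\azero$ given by Proposition~\ref{theoremintro-existenceuniquenessgeodesics} avoids $\zero$. Gau{\ss}--Bonnet does not obstruct this. Cutting along $\gamma_\azero$ gives an annulus with timelike geodesic boundary, containing the cone point of angle $\theta$ in its \emph{interior}, and Proposition~\ref{proposition-GaussBonnet} only says that its area is $\theta$, which is consistent. So the object you must classify is an annulus bounded by two closed timelike geodesics of length $\ell$ with one interior singularity. It is not the polygon ``bounded by two timelike segments meeting at the singular point'' that you build in Step~3, and your remark ``the argument below adapts'' is never substantiated. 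Relatedly, you do not engage with the actual definition of $\Theta_{\azero,\bzero}$. It is not the return point of an arbitrary transversal representing $\bzero$. It is read off from a canonical transversal: the $\alpha$-lightlike leaf of the singularity, which meets $\gamma_\azero$ because class A lightlike foliations are suspensions.

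The real gap is the core claim on which both injectivity and surjectivity rest. This claim is that $(\theta,\ell)$ determine the cut-open annulus up to isometry, and that the remaining gluing freedom is exactly the single real parameter measured by $\Theta_{\azero,\bzero}$. You only assert it (``should fix'', ``is forced by''). A holonomy dimension count cannot supply it: the Ehresmann--Thurston principle is purely local, and nothing you invoke says a singular structure is determined by its holonomy. Appealing to Theorem~\ref{theoremintro-rigidityMLS} would be circular, since its proof uses the same decomposition.

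The missing idea is to cut the annulus further along the $\alpha$-leaf of the singularity. This yields a rectangle with two $\alpha$-lightlike edges, two timelike edges of length $\ell$, and area $\theta$ (Lemma~\ref{lemma-decoupagetoredeSitter}). Such a rectangle is unique up to $\Bisozero{1}{2}$ by Lemma~\ref{lemma-uniqueretanglelengthboundary}, which uses a central-symmetry and area argument. The gluings $(A_1,A_2,B_1,B_2)$ are then determined by marked boundary points, and the remaining freedom reduces to one point:
\begin{itemize}
\item equality of the timelike lengths determines $x''$ from $y''$;
\item regularity of the vertices $x,y$, i.e.\ the holonomy condition \eqref{equation-A1A2} via Lemma~\ref{lemma-dSstructuresholonomy}, determines $(o,o')$ from $y''$.
\end{itemize}
Hence each $y''\in J$ gives exactly one torus, whose twist is $L(\intervalleff{x}{y''}_J)/\ell$.

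With this in place, your worry about large $\abs{t}$ disappears. $\mathcal{L}_\azero$ is invariant under $(D_\azero^n)_*$ while $\Theta_{\azero,\bzero}$ shifts by $n$, so only $t\in\intervallefo{0}{1}$ has to be realized. Finally, invariance of domain would require knowing a priori that $\Deftheta^A_\azero$ is a topological surface (e.g.\ \cite[Theorem E]{mion-moutonRigiditySingularDeSitter2024}). The paper instead writes down the continuous inverse $(l,u)\mapsto\mzero_{\theta,l,p_{ul}}$ directly.
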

These length-twist coordinates give a natural
real-analytic atlas on $\Deftheta^A$
(see Corollary \ref{corollary-DefthetaAhausdorfftopologicalsurface}
and Remark \ref{remark-coordonneesdiff}
for more details).

\subsection{Methods and perspectives}
We highlight in this text some
fundamental geometrical
properties of two-dimensional de-Sitter geometry,
which do not appear to have been previously exploited
and will play an important role in this paper.
We also introduce new elementary tools
for the study of singular de-Sitter tori.
Our hope is to illustrate
that while de-Sitter geometry is fundamentally
distinct from hyperbolic geometry
(essentially
due to the existence of invariant foliations),
many efficient methods are however available
which conceptually replace the usual ``toolbox''
of a hyperbolic geometer.
We wish thereby to motivate
the further investigation of
these structures.
\par The study of singular de-Sitter surfaces
remains in its infancy,
and it is our opinion that the present work
has opened many questions.
The first broad direction concerns
singular de-Sitter structures on
\emph{surfaces of genus higher than two}.
We refer for instance to
\cite[Remark 4.5]{mion-moutonRigiditySingularDeSitter2024}
for a discussion of
the new singularities to be introduced on
such surfaces
(in order to introduce singularities of the lightlike foliations).
In their case even the existence
of closed timelike geodesics
is generally not ensured,
which is the subject of an ongoing work.
The coincidence of de-Sitter geometry and
non-trivial topology
is likely to make the study
of the lengths of closed timelike geodesics
particularly interesting,
concerning the rigidity question studied
in the present paper
as well as the \emph{asymptotic counting}
of such geodesics.
Another promising perspective for these questions
is the consideration of \emph{singular Lorentzian surfaces
with variable curvature of constant sign},
which are the Lorentzian counterpart
of Riemannian surfaces with negative curvature.\footref{footnote-courburedS}
In a future work, we wish
to investigate the links of such structures
with dynamical properties of their
timelike geodesic flow.
\par We focus
in the present paper
on de-Sitter tori with a \emph{unique} singularity.
The main reason
for this is the possibility for
a de-Sitter torus with multiple singularities
to contain distinct freely homotopic
closed timelike geodesics
which ``separate'' the singularities.
Outside of this difficulty,
Lemma \ref{lemma-decoupagetoredeSitter}
holds and yields partial coordinates
on the deformation space,
by arguments similar to Paragraph
\ref{subsection-lengthtwist}.
The investigation of the specific behaviours
appearing in presence of
multiple singularities will be the object
of a forthcoming work.

\subsection{Organization of the paper}
We give in Section \ref{section-deSittertori}
an overview of de-Sitter tori,
sufficient for the purposes of this text.
In Section \ref{section-uniquenessclosedtimelikegeodesics},
we introduce the notions concerning closed timelike geodesics
and prove Proposition \ref{theoremintro-existenceuniquenessgeodesics}.
We define the twist coordinate and prove Theorem
\ref{theoremeintro-fenchennielsen} in
Section \ref{section-FenchenNielsencoordinates}.
We finally prove
Theorem \ref{theoremintro-rigidityMLS}
in Section \ref{section-rigiditytimelikeMLS}.

\subsection*{Acknowledgments}
The author is grateful to Anna Wienhard for originally
drawing his attention to the question of a
``Lorentzian marked length spectrum''.
He also thanks Clément Pérault
for his interest concerning
this work.

\section{Singular de-Sitter tori}
\label{section-deSittertori}
\subsection{De-Sitter space}
\label{subsection-deSitterspace}
We denote by
$\R^{1,2}=(\R^3,q_{1,2})$
the three-dimensional \emph{Minkowski space},
endowed with the Lorentzian quadratic form
$q_{1,2}(x,y,z)=x^2+y^2-z^2$.
A vector $u\in\R^{1,2}$ is respectively called:
\begin{itemize}
\item \emph{spacelike} if $q_{1,2}(u)>0$,
\emph{timelike} if $q_{1,2}(u)<0$,
\item \emph{definite} if $u$ is timelike or spacelike,
\item \emph{lightlike} if $q_{1,2}(u)=0$,
\item \emph{causal} is $q_{1,2}(u)\leq0$,
and \emph{anticausal} is $q_{1,2}(u)\geq0$.
\end{itemize}
Half-lines of $\R^{1,2}$ are named accordingly.
A plane $P\subset\R^{1,2}$ is called:
\begin{itemize}
 \item \emph{timelike} if ${q_{1,2}}\restreinta_P$
 has \emph{Lorentzian} signature $(-,+)$,
 \emph{i.e.} contains two lightlike lines;
 \item \emph{lightlike} if $q_{1,2}\restreinta_P$
 is degenerated,
 \emph{i.e.} contains a unique lightlike line;
 \item \emph{spacelike} if $q_{1,2}\restreinta_P$
 has \emph{Euclidean} signature $(+,+)$,
 \emph{i.e.} contains no lightlike line.
\end{itemize}
The timelike cone of $\R^{1,2}$
has two connected
components, and vectors in the component of
$(0,0,1)$ (respectively $(0,0,-1)$)
are called \emph{future} (resp. \emph{past}) timelike vectors.
The choice of such a future connected component
in the timelike cone is called a \emph{time-orientation}.
\subsubsection{De-Sitter space,
lightlike foliations and their orientations}
\label{subsubsection-deSitterspacefoliations}
We denote by $\mathbf{P}^+(\R^{1,2})$
(homeomorphic to $\Sn{2}$)
the space of half-lines
of $\R^{1,2}$.
The two-dimensional \emph{de Sitter space}
is the quadric
\begin{equation*}
\label{equation-deSitter}
 \dS\coloneqq
 \{d\in\mathbf{P}^+(\R^{1,2})\text{~spacelike}\}
 \subset\mathbf{P}^+(\R^{1,2})
\end{equation*}
endowed with the \emph{Lorentzian metric}
$\mudS$
induced by the restriction to $\dS$
of $q_{1,2}$, which
has constant curvature $1$.
This makes
the de-Sitter space
$\dS$ the Lorentzian analog of
the \emph{hyperboloid model of the
hyperbolic plane}\footnote{\label{footnote-courburedS}Note that
if $g$ is a Lorentzian metric of constant curvature
$1$ on a surface, then $-g$
is also a Lorentzian metric,
and of constant curvature $-1$.
Unlike the Riemannian case,
there are therefore \emph{only two local
constant curvature
Lorentzian geometries in dimension $2$}
(up to anti-isometries and scaling by a constant
factor):
the linear model of curvature zero
(the Minkowski plane $\R^{1,1}$),
and the model of non-zero curvature
(the de-Sitter space $\dS$).}:
\begin{equation*}
 \label{equation-H2}
 \Hn{2}\coloneqq
 \{d\in\mathbf{P}^+(\R^{1,2})\text{~timelike future}\}
 \subset\mathbf{P}^+(\R^{1,2}).
\end{equation*}
Each timelike plane of $\R^{1,2}$
inherits a time-orientation
(whose future cone is contained within the future cone
of $\R^{1,2}$).
We endow $\dS$ with the induced
future timelike and spacelike cones,
and with the orientation coming from
that of $\R^3$
(with the exterior normal rule).
The \emph{lightlike foliations} $\mathcal{F}_{\alpha/\beta}$
of $\dS$ are the one tangent to its two lightlike
line fields.
They are named and oriented
compatibly with the orientation and the time-orientation
of $\dS$
as indicated in Figure \ref{figure-definitionXsingulartheta}.
\subsubsection{Geodesics in the projective de-Sitter model}
\label{subsubsection-geodesicsprojectivedS}
The model $\dS$ of the de-Sitter space is
\emph{projective}
in the sense that the geodesics
of $\dS$ are precisely the connected components
of the intersections of projective lines
of $\mathbf{P}^+(\R^{1,2})$ with $\dS$.
Let
\begin{equation*}
\label{equation-bordinfinidS}
 \partial_\infty^\pm\dS\coloneqq
 \{d\in\mathbf{P}^+(\R^{1,2})\text{~lightlike
 future/past}\}
 \subset\mathbf{P}^+(\R^{1,2})
\end{equation*}
denote the
\emph{future} (resp. past) \emph{boundary
of $\dS$ at infinity},
which we
endow with the
orientation compatible with the one
of $\dS$.
The geodesics of $\dS$
are described as follows.
\begin{enumerate}
\item \emph{Timelike projective lines}
 are the one intersecting each of the two boundaries
 $\partial_\infty^\pm\dS$ in two points.
 Their intersections with $\dS$
 have two connected components which are opposite
 \emph{timelike geodesics}.
 \item \emph{Lightlike projective lines}
  are the lines $L$ tangent to
 $\partial_\infty^\pm\dS$,
 \emph{i.e.} intersecting each of the two boundaries
 at a unique point $L^\pm$.
 Their intersections with $\dS$
 have two connected components.
 \begin{itemize}
  \item The component
  converging to $\partial_\infty^-\dS$
  in the future is an \emph{$\alpha$ lightlike geodesic}.
  \item The component
  converging to $\partial_\infty^+\dS$
  in the future is a \emph{$\beta$ lightlike geodesic}.
 \end{itemize}
 \item \emph{Spacelike projective lines}
 are the one disjoint from
 $\partial_\infty^\pm\dS$, and coincide with
 \emph{spacelike geodesics} of $\dS$.
\end{enumerate}

The complement of any spacelike geodesic $s$ of $\dS$
in $\mathbf{P}^+(\R^{1,2})$
is the union of a future affine chart
$\mathcal{A}$
and of a past affine chart,
in which $\partial_\infty^\pm\dS$
is a round circle, and
of which $\Hn{2}$ (respectively its past copy)
is the interior
and $\dS_s\coloneqq \dS\cap\mathcal{A}$ the exterior.
The traces in $\mathcal{A}$ of
timelike, lightlike and spacelike geodesics
of $\dS$ are the intervals in $\dS_s$
of the affine lines of $\mathcal{A}_s$
which respectively
intersect,
are tangent, and are disjoint
from the future boundary $\partial_\infty^+\dS$.
We refer to
\cite[Figure 5 p.13]{nurowskiBeltramiSitterModel2025}
for a representation of such an affine chart,
and for a thorough comparison
of different models of $\dS$.

\subsubsection{De-Sitter plane and isometries}
\label{subsubsection-deSitterplane}
The universal
cover of the de-Sitter space,
homeomorphic to $\R^2$,
is denoted by
$\dStilde$ and called the \emph{de-Sitter plane}.
We endow $\dStilde$ with the pullback of
the metric of $\dS$.
The automorphism group of the
universal cover
$\dStilde\to\dS$
is isomorphic to $\Z$ and generated
by the action of a (closed) spacelike geodesic of $\dS$
on the universal cover.
\par The connected component of the identity
in the stabiliser of $q_{1,2}$
in $\SL{3}$ is denoted by
$\Bisozero{1}{2}$.
It is the group of isometries
of $\dS$
(and of $\Hn{2}$)
preserving both its orientation and its
time-orientation
(see for instance
\cite[Lemma 2.2]{mion-moutonRigiditySingularDeSitter2024}).

\subsubsection{Identification with the space
of oriented geodesics of $\Hn{2}$}
\label{subsubsection-spaceorientedgeodesics}
Any $p\in\dS$ is contained in
a unique $\alpha$ (respectively $\beta$) lightlike
geodesic $p_\alpha$ (resp. $p_\beta$)
of $\dS$,
having a past limit point
$p_\alpha^\infty\in\partial_\infty^+\dS$
(resp. a future limit point
$p_\beta^\infty\in\partial_\infty^+\dS$).
This defines two
$\Bisozero{1}{2}$-equivariant
projections
\begin{equation*}
\label{equation-projectionsCalphabeta}
\pi_{\alpha/\beta}\colon
p\in\dS\mapsto
p_{\alpha/\beta}^\infty\in\partial_\infty^+\dS
\end{equation*}
whose fibers are precisely the $\alpha$
(respectively $\beta$)
lightlike geodesics of $\dS$.
Note that $p_\alpha^\infty\neq p_\beta^\infty$,
namely
\begin{equation*}
\label{equation-espacegeodesiquesH2}
 (\pi_\alpha\times\pi_\beta)(p)\in(\partial_\infty^+\dS)^{(2)}
\coloneqq(\partial_\infty^+\dS)^2
\setminus\enstq{(d,d)}{d\in\partial_\infty^+\dS}.
\end{equation*}
Moreover
the $\Bisozero{1}{2}$-equivariant map
\begin{equation}
\label{equation-identificationespacegeodesiquesH2}
 \pi_\alpha\times\pi_\beta
 \colon\dS\to
 (\partial_\infty^+\dS)^{(2)}
\end{equation}
is a bijection.
Identifying $\partial_\infty^+\dS$ with the boundary
of $\Hn{2}$,
$(\partial_\infty^+\dS)^{(2)}$ is
the space of oriented geodesics of $\Hn{2}$
which is therefore identified with $\dS$
in a $\Bisozero{1}{2}$-equivariant way.
The geodesic
of $\Hn{2}$
containing $(\pi_\alpha\times\pi_\beta)(p)$
and oriented from
$p_\alpha^\infty$ to $p_\beta^\infty$
is simply the intersection with $\Hn{2}$
of the orthogonal of $p$ in $\R^{1,2}$
(oriented compatibly with $p$
and with the orientation of $\R^{1,2}$).
\begin{remark}
 \label{remark-dSuniquenaturalgeometry}
 One can check that $\mudS$ is the unique
pseudo-Riemannian metric of $\dS$
(up to scaling by a constant factor)
which is invariant by $\Bisozero{1}{2}$
(see
\cite[\S 2.3]{mion-moutonRigiditySingularDeSitter2024}
for instance).
The de-Sitter geometry
studied in this article
is therefore the unique isometry-invariant
geometry of the space of oriented geodesics
of the hyperbolic plane.
\end{remark}

\subsection{Singular de-Sitter tori}
\label{subsection-singulardeSittertori}
We introduce in this paragraph
all the notions about
singular de-Sitter tori
needed in this text.
We refer to
\cite[\S 3 and \S 4]{mion-moutonRigiditySingularDeSitter2024}
for more details and
for proofs of the claims appearing in this
paragraph.

\subsubsection{De-Sitter surfaces and holonomy morphism}
\label{subsubsection-deSittersurfaces}
A \emph{de-Sitter surface} is a surface locally
modelled on $\dS$, in a sense
made rigorous by the following notion
of \emph{$(G,X)$-structure}.
\begin{definition}\label{equation-definitionsurfaceslocalementdS}
A \emph{$\dS$-atlas} on an oriented
topological surface $S$ is an
atlas of orientation-preserving
topological charts $\varphi_i\colon U_i\to\dS$
from connected open subsets $U_i\subset S$
to $\dS$
(called \emph{$\dS$-charts}),
whose
transition maps
$\varphi_j\circ\varphi_i^{-1}\colon \varphi_j(U_i\cap U_j)\to
\varphi_i(U_i\cap U_j)$
equal
the restriction of an element of $\Bisozero{1}{2}$.
A \emph{$\dS$-structure} on $S$
is a maximal $\dS$-atlas,
and a \emph{$\dS$-surface} is an oriented surface
endowed with a $\dS$-structure.
\end{definition}
As in the case of hyperbolic surfaces,
one can show that the data of a $\dS$-structure
on $S$ is exactly equivalent
to that of a time-oriented
Lorentzian metric of constant curvature $1$
(see \cite[Proposition-Definition 2.5]{mion-moutonRigiditySingularDeSitter2024}).
\par The restriction of the
universal covering map
to sufficiently small open subsets
endows the universal cover $\tilde{S}$
of a $\dS$-surface $S$
with an induced $\dS$-structure.
For instance, $\dStilde$ has a natural $\dS$-structure.
The fundamental group $\piun{S}$ of $S$
acts on $\tilde{S}$ by
isometries, and this action
is encoded through
the \emph{holonomy morphism}
\begin{equation*}
 \label{equation-holonomy}
 \hol\colon\piun{S}\to\Bisozero{1}{2}
\end{equation*}
of $S$.
The latter is characterized by the existence of a
\emph{developing map}
$\delta\colon\tilde{S}\to\dS$
whose restriction to any sufficiently small subset
is a $\dS$-chart, and which
is equivariant with respect to the holonomy morphism
$\hol$.

\subsubsection{Singular de-Sitter surfaces and their lightlike foliations}
\label{subsubsection-localsingularities}
We now define the local model of singularities,
originally introduced in \cite[p.160]{barbot_collisions_2011}
and inspired from classical Riemannian conical
singularities.
Let $\odS\in\dStilde$ be a fixed base-point
whose stabilizer in $\Bisozero{1}{2}$
is denoted by $\{a^u\}_{u\in\R}$.
We fix $\theta>0$.
Any future timelike half-geodesic
$\gamma\subset\dStilde$
emanating from $\odS$
bounds together with
$a^\theta(\gamma)$
an open sector $D_\theta$
of $\dStilde$ of angle $\theta$
indicated in Figure
\ref{figure-definitionXsingulartheta}.
The complement
$\dStilde\setminus D_\theta$
is a $\dS$-surface with two timelike geodesic
boundary components
$\gamma$ and $a^\theta(\gamma)$
and a conical point $\odS$.
The quotient
\begin{equation*}
 \label{equation-dSsingularthetatilde}
 \dSsingularthetatilde\coloneqq
 (\dStilde\setminus D_\theta)/\sim_\theta
\end{equation*}
by the identification
$\gamma\ni x\sim_\theta
a^\theta(x)\in a^\theta(\gamma)$
of the boundary components
is the \emph{standard $\dS$-cone}
of angle $\theta$.
It bears a marked point
$\odSsingulartheta\in\dSsingularthetatilde$
which is the projection of $\odS$.
The identification
being made by isometries,
$\dSsingularthetatilde\setminus\{\odSsingulartheta\}$
has a natural $\dS$-structure
induced by that of $\dStilde$.
Note that $\dSsingularthetatilde$
remains homeomorphic to $\R^2$.
A similar construction defines
the local singularity for $-\theta$,
but we will only need positive singularities
in this text as we will see
in Proposition \ref{proposition-GaussBonnet}.
The local singularity can be described in various other ways
as detailed in
\cite[\S 3.1]{mion-moutonRigiditySingularDeSitter2024}.
\begin{figure}[!h]
	\begin{center}
		\def\svgwidth{0.65 \columnwidth}
			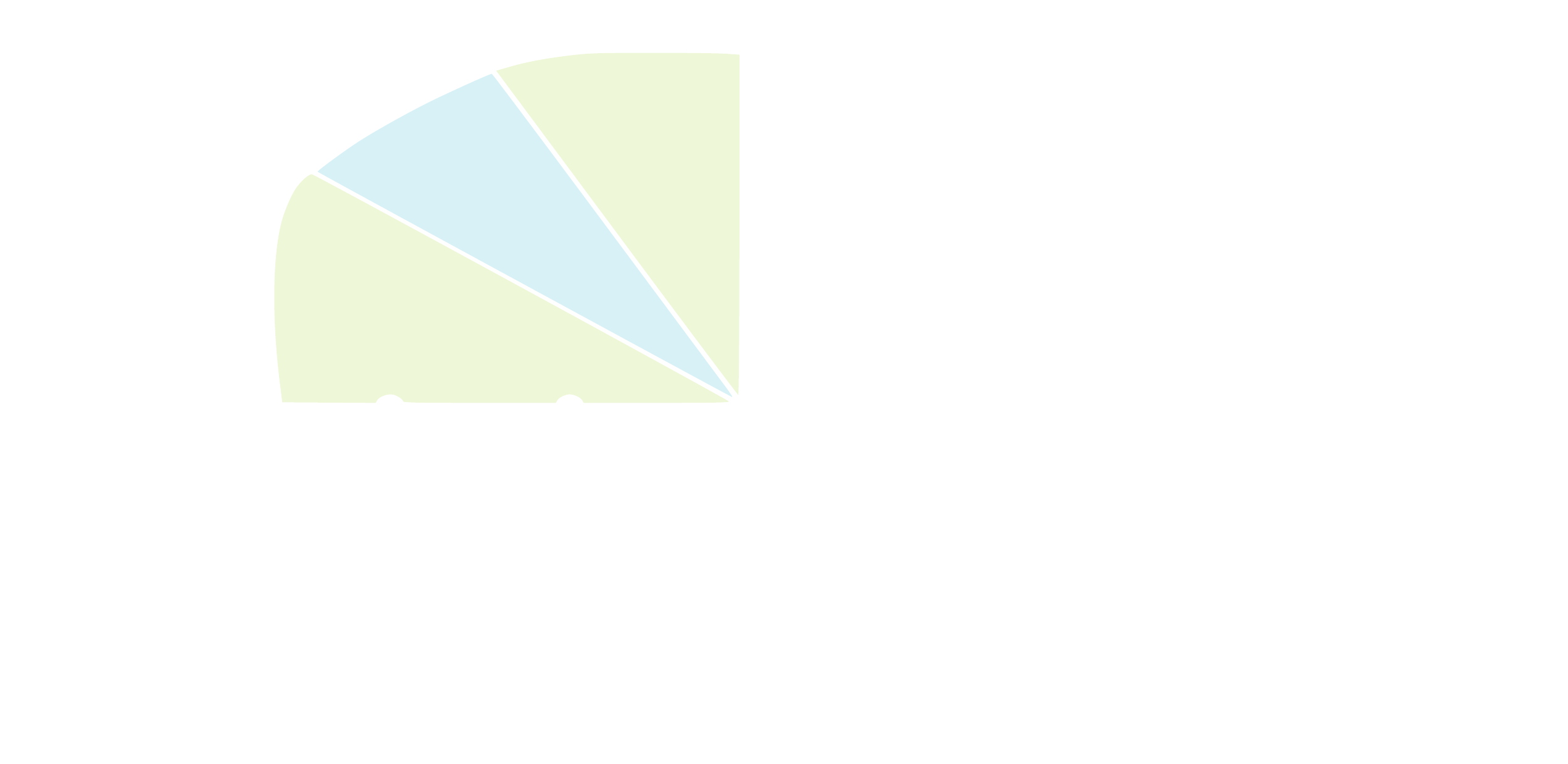
	\end{center}
	\caption{Local de-Sitter singularity
	of angle $\theta$.}
			\label{figure-definitionXsingulartheta}
\end{figure}
\par By the very definition of $\dSsingularthetatilde$,
the holonomy
of a small curve $c$
around $\odSsingulartheta$
equals $a^\theta$
(see Figure \ref{figure-definitionXsingulartheta},
and \cite[\S 3.1.3]{mion-moutonRigiditySingularDeSitter2024}
for more details).
Since $a^\theta\neq\id$,
this shows that
the $\dS$-structure of
$\dSsingularthetatilde\setminus\{\odSsingulartheta\}$
cannot be extended to
$\odSsingulartheta$.
Indeed if it could,
then the curve $c$ would be homotopically trivial
in the $\dS$-surface $\dSsingularthetatilde$,
and its holonomy would thus be trivial as
well.
While this argument may
be unclear
to a reader not
used to the notion of
$(G,X)$-structures,
it partially explains
the importance of the holonomy
for singular $\dS$-structures
that we now define.
\begin{definition}[{\cite[Definition 3.16]{mion-moutonRigiditySingularDeSitter2024}}]
 \label{definition-singulardSstructures}
 A \emph{singular $\dS$-structure} $\m$
 with \emph{singular points} $\Sigma\subset S$
 on an oriented surface $S$
 is a $\dS$-structure on $S\setminus\Sigma$
 whose singular points are
 \emph{standard singularities}
 in the following sense.
 For any $x\in\Sigma$,
 there exists
 $\theta_x\in\R^*$ (the \emph{angle at $x$})
 and a homeomorphism
 $\varphi$
 (called a \emph{singular $\dS$-chart}
 at $x$)
 from an open neighbourhood
 $U\subset S$ of $x$
 to an open neighbourhood
 $V\subset\dSsingularthetatilde$ of $\odSsingulartheta$
 such that:
 \begin{enumerate}
  \item $U\cap\Sigma=\{x\}$,
  \item $\varphi(x)=\odSsingulartheta$,
  \item and $\varphi$ is an isometry in restriction
  to $S\setminus\Sigma$.
 \end{enumerate}
Points $x\in S\setminus\Sigma$ are said
\emph{regular}.
A \emph{singular $\dS$-structure with
geodesic boundary} on an oriented surface
$S$ with boundary
is a singular $\dS$-structure
on the interior of $S$,
of which the boundary is geodesic
(for the induced Lorentzian metric).
\end{definition}
The following statement,
proved in
\cite[Lemma 3.5]{mion-moutonRigiditySingularDeSitter2024},
gives a useful way to
know if a potential singular point
is actually regular or not.
\begin{lemma}[{\cite[Lemma 3.5]{mion-moutonRigiditySingularDeSitter2024}}]
 \label{lemma-dSstructuresholonomy}
Let $S$ be a singular $\dS$-surface
of holonomy morphism $\hol$.
Let $x\in S$ be a standard singularity
of angle $\theta$,
and $c_x$ be the homotopy class
of a small closed curve around $x$.
Then $\hol(c_x)$ is conjugated to $a^\theta$.
In particular, $x$ is a regular point
if and only if $\hol(c_x)=\id$.
\end{lemma}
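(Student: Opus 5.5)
The plan is to reduce everything to the local model $\dSsingularthetatilde$ and compute holonomy there, then transport the result to $S$ through a singular $\dS$-chart. Fix a singular $\dS$-chart $\varphi\colon U\to V$ at $x$ as in Definition \ref{definition-singulardSstructures}, with $U\cap\Sigma=\{x\}$. Shrinking $U$, I will assume $V$ is a small disc around $\odSsingulartheta$, so that $U\setminus\{x\}$ is an annulus and $c_x$ generates its fundamental group. Since $\varphi$ is an isometry of $\dS$-structures on $U\setminus\{x\}$, the $\dS$-structure of $S\setminus\Sigma$ restricted to $U\setminus\{x\}$ is isomorphic to the one of $V\setminus\{\odSsingulartheta\}$. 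By naturality of developing pairs, the restriction of $\hol$ to the image of $\piun{U\setminus\{x\}}\to\piun{S\setminus\Sigma}$ is therefore conjugate in $\Bisozero{1}{2}$ to the holonomy of $V\setminus\{\odSsingulartheta\}$. The conjugation comes from the freedom in the choice of developing map and of base point, and it also absorbs the fact that $c_x$ is only a free homotopy class.

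The core computation is the holonomy of a small loop $c$ around $\odSsingulartheta$ in $\dSsingularthetatilde$. I would cut $V\setminus\{\odSsingulartheta\}$ along the image of $\gamma\sim_\theta a^\theta(\gamma)$, which gives a simply connected region $W\subset\dStilde\setminus D_\theta$. On $W$ a developing map is the restriction of the covering map $\dStilde\to\dS$, up to the identification. Analytically continuing this map across the glued edge means composing with $a^\theta$, because the gluing identifies $x\in\gamma$ with $a^\theta(x)$. Following $c$ once in the positive direction, the developing map comes back multiplied by $a^{\pm\theta}$, with the sign fixed by the orientation conventions. Hence $\hol(c_x)$ is conjugate to $a^\theta$, provided $c_x$ is oriented consistently with Figure \ref{figure-definitionXsingulartheta}.

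For the last claim, suppose first that $x$ is regular. Then it is a point of the $\dS$-structure itself, so $c_x$ is trivial in $\piun{U}$ and $\hol(c_x)=\id$. Conversely, suppose $\hol(c_x)=\id$. Then $a^\theta$ is conjugate to $\id$, so $a^\theta=\id$. Because $\{a^u\}$ is the stabilizer of $\odS$ acting on $\dStilde$, a one-parameter group of boosts acting faithfully (it has no nontrivial periodicity, unlike Riemannian rotations), this forces $\theta=0$. This contradicts $\theta_x\in\R^*$, so $x$ is not a standard singularity with nonzero angle. Equivalently, in the $\theta=0$ picture the chart extends across $x$ and the point is regular. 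Faithfulness of $u\mapsto a^u$ follows from the explicit form of $a^u$ as a hyperbolic rotation fixing the spacelike line $\odS$ in $\R^{1,2}$.

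The main obstacle is keeping the bookkeeping in the cut-and-continue computation honest: the developing map must be continuous across the cut, and the sign and orientation of $a^{\pm\theta}$ must come out right. The lemma asks for conjugacy to $a^\theta$, not to $a^{-\theta}$. These are not conjugate in $\Bisozero{1}{2}$ in general, so I would fix the orientation of $c_x$ using the orientation of $S$ and Figure \ref{figure-definitionXsingulartheta}.
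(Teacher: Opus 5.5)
Your argument is correct and follows the same approach the paper relies on: the paper only cites the lemma, but sketches exactly this reasoning in Paragraph \ref{subsubsection-localsingularities}. The holonomy of a small loop around $\odSsingulartheta$ is the gluing isometry $a^\theta$, a singular chart transports it to $S$ up to conjugacy, and regularity is equivalent to trivial holonomy, because contractible loops have trivial holonomy while $u\mapsto a^u$ is injective.

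One correction, which removes your ``main obstacle'': $a^{-\theta}$ \emph{is} conjugate to $a^\theta$ in $\Bisozero{1}{2}$. The rotation of angle $\pi$ of $\Hn{2}$ about any point of the geodesic $\odS^{\perp}\cap\Hn{2}$ conjugates $a^\theta$ to $a^{-\theta}$ (it swaps $\odS$ and $-\odS$); equivalently, hyperbolic elements of $\PSL{2}$ are conjugate to their inverses. So your computation giving $a^{\pm\theta}$ already proves the claim for either orientation of $c_x$, which is why the statement does not need to orient $c_x$.
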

An important property
of a singular $\dS$-structure $\m$
on a surface $S$
is that the lightlike foliations
induced by $\m$
outside of the singularities
extend on $S$ to two transverse
\emph{topological} one-dimensional foliations
(see \cite[Lemma 3.24]{mion-moutonRigiditySingularDeSitter2024}).
These foliations are still called the
\emph{lightlike foliations of $S$},
and are denoted by $(\Falpha^\m,\Fbeta^\m)$.

\subsubsection{Lorentzian angles and Gau{\ss}-Bonnet formula}
\label{subsubsection-Lorentzianangles}
We will use in the sequel of the text
a notion of Lorentzian angles
that we now introduce
following
\cite{birmanGaussBonnetTheorem2dimensional1984}.
\begin{definition}[\cite{birmanGaussBonnetTheorem2dimensional1984}]\label{definition-timespaceLorentzianangle}
Let $P$ be an oriented plane endowed with a Lorentzian scalar product
$\prodscal{\cdot}{\cdot}$.
For $X,Y\in P$, we denote
$\orientation{X}{Y}=1$ (respectively $-1$)
if $(X,Y)$ is a positively (resp. negatively) oriented basis,
and $\orientation{X}{Y}=0$ if $(X,Y)$ are linearly dependent.
Then for $(X,Y)$ two unit timelike vectors
belonging to the same quadrant
of $P$,
the Lorentzian \emph{angle from $X$ to $Y$} is defined by
\begin{equation*}
\label{equation-definitionanglememequadrant}
\angleLorentzien{X}{Y}\coloneqq \orientation{X}{Y}
\arcosh\abs{\prodscal{X}{Y}}
\end{equation*}
with $\arcosh\colon\intervallefo{1}{+\infty}\to\R^+$
the inverse hyperbolic cosine function.
This definition is extended to any pair $(X,Y)$
of unit timelike vectors by the relation
\begin{equation*}\label{equation-definitionangledifferentsquadrant}
\angleLorentzien{X}{Y}=\angleLorentzien{X}{-Y}.
\end{equation*}
For $a$ and $b$ any two timelike
(half-)geodesics of $\dS$
emanating from a point $p\in\dS$,
we will denote by
$\angleLorentzien{a}{b}_p
=\angleLorentzien{\Tan{p}{a}}{\Tan{p}{b}}$
the angle from $a$ to $b$ at $p$.
\end{definition}
Note that \eqref{equation-definitionanglememequadrant} is well-defined
since $\abs{\prodscal{X}{Y}}\geq1$ according to the Lorentzian
Cauchy-Schwartz inequality.
Furthermore for any two unit timelike vectors $X,Y$, the relations
\begin{equation}\label{equation-relationsangle}
 \angleLorentzien{-X}{-Y}=\angleLorentzien{-X}{Y}
 =-\angleLorentzien{Y}{X}
 =\angleLorentzien{X}{Y}
\end{equation}
follow easily from the definition
(see \cite[Lemma 1]{birmanGaussBonnetTheorem2dimensional1984}).
This notion of angles allows to prove
the following Gau{\ss}-Bonnet
formula for simply connected domains,
due to Birman-Nomizu \cite{birmanGaussBonnetTheorem2dimensional1984}.
\begin{proposition}[{\cite[p.80]{birmanGaussBonnetTheorem2dimensional1984}}]
 \label{proposition-GaussBonnetBirmanNomizu}
 Let $P\subset\dStilde$ be a
 compact subset whose boundary is
 a polygon with timelike geodesic
 edges $(E_1,\dots,E_n)$ ($n\geq 3$)
 of respective endpoints $(v_1,\dots,v_n)$.
 Let $\nu_i=\angleLorentzien{E_i}{E_{i+1}}_{v_i}$
 denote the exterior angle
 at $v_i$
 for $i=1,\dots,n$
 (with $E_{n+1}\coloneqq E_1$).
 The area of $P$ equals
 the sum of the exterior angles:
 \begin{equation*}
  \label{equation-GaussBonnetBirmanNomizu}
  \mathcal{A}(P)=\sum_{i=1}^n\nu_i.
 \end{equation*}
\end{proposition}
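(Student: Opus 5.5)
We prove the Gau{\ss}-Bonnet formula of Proposition \ref{proposition-GaussBonnetBirmanNomizu} by reducing to triangles and then computing the area of a timelike geodesic triangle directly. The convention $\nu_i=\angleLorentzien{E_i}{E_{i+1}}_{v_i}$ is extended to unoriented geodesics by the relations \eqref{equation-relationsangle}. The plan has three steps.

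\emph{Step 1: reduction to triangles.} Since $P\subset\dStilde$ is compact, its developing image lies in a region where the projective model applies. We first show that $P$ can be cut by timelike geodesic diagonals into timelike triangles. This uses the fact that the timelike directions at a vertex form an open cone, so a suitable diagonal can be chosen, if necessary after subdividing edges at new vertices whose exterior angle is zero.

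We then check that both sides of the formula are additive under such a cut. The area is clearly additive. For the angles, at each endpoint of an added diagonal the exterior angles of the two pieces recombine into the exterior angle of $P$, up to a ``straight angle'' term. The relations \eqref{equation-relationsangle} say that reversing a timelike direction does not change the Lorentzian angle, so a straight angle contributes $0$. This is precisely why there is no $\pm\pi$ correction, in contrast with the Riemannian case. Hence additivity holds with no constant, and an induction on $n$ reduces the statement to $n=3$.

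\emph{Step 2: triangles, by variation.} For a timelike triangle we fix one vertex $v_1$ and the edge $E_1$, and slide $v_2$ along the line of $E_1$. Both sides vanish in the degenerate limit where the triangle collapses onto a segment: the area tends to $0$, and the exterior angles tend to $0$, $0$, and a straight angle, which equals $0$.

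We then differentiate both sides in the sliding parameter. The derivative of the area is the integral, along the moving side, of the width element given by the Jacobi field of the geodesics through $v_3$. In constant curvature $1$ in the Lorentzian signature, this Jacobi field is $\sinh$ of the timelike distance, by the Lorentzian analog of polar coordinates. The derivative of the angle sum is computed from the Lorentzian first variation formula together with the hyperbolic laws of cosines and sines in $\dS$. We expect the two derivatives to match, which proves the triangle case.

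\emph{Alternative for Step 2: Stokes.} One can instead write the metric around a vertex in Lorentzian polar coordinates $-dt^2+\sinh^2(t)\,d\varphi^2$, in which the area element is $\sinh t\,dt\,d\varphi$. Integrating $\cosh t\,d\varphi$ along the boundary then gives the area. The geodesic curvature of the timelike sides vanishes, so only the vertex jumps of the angle $\varphi$ remain, and these equal the exterior angles $\nu_i$.

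The main obstacle is the sign and orientation bookkeeping. One must check that the exterior angles, with the convention $\angleLorentzien{X}{Y}=\angleLorentzien{X}{-Y}$ and the orientation sign $\orientation{X}{Y}$, add up coherently with no $2\pi$-type term. We would first check this on an explicit triangle in the projective affine chart $\mathcal{A}$, where timelike geodesics are the affine lines meeting the future boundary circle.
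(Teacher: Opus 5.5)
The paper gives no proof of this proposition; it only cites Birman--Nomizu. Your additivity bookkeeping is sound: $\angleLorentzien{X}{Y}$ depends only on the lines $\R X,\R Y$ and is additive, so straight angles contribute $0$ and no $2\pi$ term appears. However, the one step where curvature enters, the triangle case, is not proved.

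\emph{The variation argument.} It stops at ``we expect the two derivatives to match'', which is the entire content of the formula. The sliding you describe also does not produce the degeneration you describe. Moving $v_2$ along a line through $v_1$ keeps the angle at $v_1$ fixed, so only the sum tends to $0$. Moreover, the moving triangle may acquire a non-timelike side, where angles are undefined, unless you slide the middle vertex towards the causally minimal one.

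\emph{The Stokes alternative.} This is the right idea, but it is incorrect as written. Here $\varphi$ is the polar coordinate of the boundary point, which is continuous along $\partial P$ and has no jumps at vertices. Also, $\cosh t\,d\varphi$ does not vanish on non-radial geodesic sides. The missing fact is that $\omega=\cosh t\,d\varphi$ is the connection form of the orthonormal frame $(\partial_t,\sinh(t)^{-1}\partial_\varphi)$, and $d\omega=\sinh t\,dt\wedge d\varphi$ is the area form. Hence, along a timelike geodesic, $-\omega$ is the differential of the rapidity of the tangent line with respect to this frame. The rapidity is $0$ on the two radial sides. So the integral of $\omega$ over the third side equals the sum of the rapidity jumps at its endpoints, i.e.\ their exterior angles. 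The excised arc $\{t=\epsilon\}$ around the centre $v$ contributes the angle at $v$. This requires the triangle to lie in the chronological future of $v$, which holds when $v$ is its causally minimal vertex.

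\emph{Step 1.} It is not justified either. A diagonal between existing vertices need not be timelike, and openness of the timelike cone does not by itself give a timelike cut. Take a Minkowski plane with null coordinates in which $(u,v)$ is timelike if and only if $uv>0$. The quadrilateral with vertices $(0,0)$, $(3,1/2)$, $(4,4)$, $(2,1)$ has four timelike sides. Its only interior diagonal, from $(3,1/2)$ to $(2,1)$, is spacelike. All these conditions are open, so a small copy exists in $\dS$. You would need an actual construction of the subdivision.

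\emph{A direct route.} The reduction to triangles is in fact unnecessary. On $\dStilde$ there are global coordinates in which the metric reads $-d\tau^2+\cosh^2(\tau)\,dx^2$. The frame $(\partial_\tau,\cosh(\tau)^{-1}\partial_x)$ has connection form $\sinh\tau\,dx$, whose differential is the area form. The rapidity of the tangent line of $\partial P$ with respect to this frame is a genuinely real-valued function, because timelike lines at a point form a copy of $\R$. It jumps by $\nu_i$ at the vertices and has differential $-\sinh\tau\,dx$ along the sides. Its total variation around $\partial P$ is $0$, so Stokes gives $\sum_i\nu_i=\mathcal{A}(P)$ for every $P$ at once, with the sign fixed by the orientation conventions. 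This is the Lorentzian version of the moving-frame proof of the local Gau\ss--Bonnet theorem. It also works directly in $\dStilde$, whereas your opening claim about the projective model overlooks that $P$ need not project injectively to $\dS$.
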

Using this relation,
the following Gau{\ss}-Bonnet formula
was obtained
for singular $\dS$-surfaces
with geodesic boundary
in \cite[Proposition 3.28]{mion-moutonRigiditySingularDeSitter2024}.
\begin{proposition}[{\cite[Proposition 3.28]{mion-moutonRigiditySingularDeSitter2024}}]
 \label{proposition-GaussBonnet}
 Let $S$ be a compact
 singular $\dS$-surface
 with timelike geodesic boundary
 and $n$ singularities of angles
 $(\theta_1,\dots,\theta_n)$.
 The area of $S$ equals
 \begin{equation*}
  \label{equation-Gaussbonnet}
  \mathcal{A}(S)=\sum_{i=1}^n\theta_i.
 \end{equation*}
If $S$ has a unique singularity $x$,
$x$ has thus a positive angle equal to the
area of $S$.
\end{proposition}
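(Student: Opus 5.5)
The plan is to triangulate $S$ into timelike geodesic triangles, apply Proposition \ref{proposition-GaussBonnetBirmanNomizu} to each triangle, and sum; the angle sums at the vertices then reduce to the singular angles.

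\textbf{Step 1: decomposition.} Choose a decomposition of $S$ into finitely many compact pieces $P_1,\dots,P_m$. Each piece should be a geodesic polygon with timelike edges, whose interior contains no singularity and which is simply connected. Consequently each piece embeds isometrically in $\dStilde$ via a developing map. The vertex set should contain all singularities $x_1,\dots,x_n$ and all corners on the boundary. Producing such a decomposition is the main obstacle. In a Lorentzian surface one cannot join arbitrary nearby points by timelike geodesics, so the usual Riemannian triangulation argument fails. My first attempt would be local:
\begin{itemize}
\item near a regular point, use small ``timelike diamonds'', namely quadrilaterals whose four edges are timelike and close to the lightlike directions, and subdivide them;
\item near a singularity, use the standard cone $\dSsingularthetatilde$ cut along finitely many future and past timelike half-geodesics from $\odSsingulartheta$, which gives sectors that develop into $\dStilde$.
\end{itemize}
I would then refine the local pieces into a global cellulation by compactness, cutting further along timelike segments. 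If this is too delicate, an alternative is to cite the decomposition used in the reference \cite{mion-moutonRigiditySingularDeSitter2024}.

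\textbf{Step 2: local Gauß--Bonnet.} For each piece, Proposition \ref{proposition-GaussBonnetBirmanNomizu} gives $\mathcal{A}(P_j)=\sum_{v}\nu_v(P_j)$, the sum of the exterior Lorentzian angles at the vertices of $P_j$. Summing over $j$ gives
\[
\mathcal{A}(S)=\sum_j\sum_{v\in P_j}\nu_v(P_j).
\]

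\textbf{Step 3: regrouping by vertex.} Group the exterior angles around each vertex of the cellulation. At a vertex, the exterior angle of a polygon equals the oriented angle between consecutive incoming and outgoing edges. Using the relations \eqref{equation-relationsangle}, the angle between $E_i$ and $E_{i+1}$ at $v_i$ can be rewritten as an ``interior'' sector angle with a sign convention.
\begin{itemize}
\item At an interior regular vertex, the interior sectors of all pieces meeting there close up the full tangent plane. Their Lorentzian angles, counted with the signs from \eqref{equation-relationsangle}, sum to zero. This reflects the fact that the holonomy of a small loop around the vertex is trivial, by Lemma \ref{lemma-dSstructuresholonomy}, and that Lorentzian angles are additive along a quadrant with the $\pm Y$ convention.
\item At a boundary vertex on a smooth stretch of the timelike geodesic boundary, the sectors fill a half-plane bounded by a timelike line. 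They therefore contribute zero, since a straight timelike line has Lorentzian angle $0$ with itself up to sign.
\item At a singular vertex $x_i$, the sectors fill the cone $\dSsingularthetatilde$. Their sum equals the boost parameter $\theta_i$ of the holonomy $a^{\theta_i}$, again by Lemma \ref{lemma-dSstructuresholonomy} together with the construction gluing $\gamma$ to $a^{\theta}(\gamma)$.
\end{itemize}

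The bookkeeping in Step 3 (signs and the quadrant conventions of Definition \ref{definition-timespaceLorentzianangle}) is routine once checked on the standard cone and on the identity. Summing the vertex contributions yields $\mathcal{A}(S)=\sum_i\theta_i$. For a single singularity this reads $\theta=\mathcal{A}(S)>0$, since the area is positive.
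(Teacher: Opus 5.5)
Your argument follows the route the paper indicates: the formula is deduced in the cited reference from the Birman--Nomizu formula by cutting $S$ into timelike geodesic polygons with the singularities among the vertices and regrouping exterior angles by vertex. Your Step~3 bookkeeping is right, because $\angleLorentzien{X}{Y}$ only depends on the lines $\R X,\R Y$ and is a difference of rapidities. The angles therefore telescope to $0$ at regular interior or boundary vertices (and at vertices lying inside an edge of a neighbouring piece), and to the rapidity $\theta_i$ of the linear holonomy at a singular vertex.

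The one step you leave open is the existence of the cellulation in Step~1. Your local sketch gives it once made precise:
\begin{itemize}
\item cover $S$ by finitely many small timelike diamonds, half-diamonds along $\partial S$, and stars of thin timelike triangles around each singular point;
\item perturb slightly so that all their edges are in general position, which is possible since timelikeness is an open condition;
\item take the complementary regions of the union of all edges, each of which lies in a single chart or sector and so develops injectively, adding timelike cuts wherever a region is not simply connected.
\end{itemize}
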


\subsubsection{Examples of singular de-Sitter tori}
We conclude this section
with examples of
$\dS$-tori with a unique singularity.
Recall first that $\dS$ identifies
$\Bisozero{1}{2}$-equivariantly
with $(\partial_\infty^+\dS)^{(2)}$
according to
\eqref{equation-identificationespacegeodesiquesH2}.
Since $\partial_\infty^+\dS$
can itself be projectively identified with
$\RP{1}$,
$\dS$ is eventually identified
with
\begin{equation*}
 \dSancien\coloneqq(\RP{1})^{(2)},
\end{equation*}
equivariantly with respect to an isomorphism
from $\Bisozero{1}{2}$ to $\PSL{2}$
\cite[Remark 2.2]{mion-moutonRigiditySingularDeSitter2024}.
We work in this paragraph with the model
$\dSancien$
that we endow with its unique
$\PSL{2}$-invariant
Lorentzian metric of constant curvature $1$
\cite[\S 2.3]{mion-moutonRigiditySingularDeSitter2024}.
We also identify
$\RP{1}$ to $\R\cup\{\infty\}$,
on which $\PSL{2}$ acts by homographies.
\par For any fixed $\theta>0$,
$x\in\intervalleoo{1}{\infty}$
and $y\in\intervalleoo{0}{1}$,
there exists in $\dSancien$ a
unique polygon
$\mathcal{L}_{\theta,x,y}$
with lightlike edges
and area $\theta$
as indicated in
Figure \ref{figure-gluingLshaped}
\cite[\S 4.3]{mion-moutonRigiditySingularDeSitter2024}.
Any
$x'\in\intervalleoo{1}{\infty}$
and $y'\in\intervalleoo{0}{y_+}$
moreover define unique
identifications of the edges
of $\partial\mathcal{L}_{\theta,x,y}$
by isometries in $\PSL{2}$
as indicated in
Figure \ref{figure-gluingLshaped}.\footnote{This
is due to the fact that
$\PSL{2}$ acts simply transitively
on proper $\alpha$ (respectively $\beta$)
lightlike segments of $\dSancien$.}
The quotient of
$\mathcal{L}_{\theta,x,y}$ by these
edges identifications
is a $\dS$-torus $T_{x,y,x',y'}$
of area $\theta$ and with
at most three singularities
(at the projections of the vertices
of $\mathcal{L}_{\theta,x,y}$).
According to \cite[Lemma 4.10]{mion-moutonRigiditySingularDeSitter2024},
there exists
unique parameters $(x'(x,y),y'(x,y))$ so that
the
(projection of the)
purple point of coordinates $(1,0)$
is the unique singularity of
$T_{x,y,x'(x,y),y'(x,y)}$.
The $\dS$-torus
\begin{equation*}
 \mathcal{T}_{\theta,x,y}\coloneqq T_{x,y,x'(x,y),y'(x,y)}
\end{equation*}
having area $\theta$,
its unique singularity has thus angle
$\theta$ according to
Proposition \ref{proposition-GaussBonnet}.
\begin{figure}[h]
	\begin{center}
		\def\svgwidth{0.8 \columnwidth}
			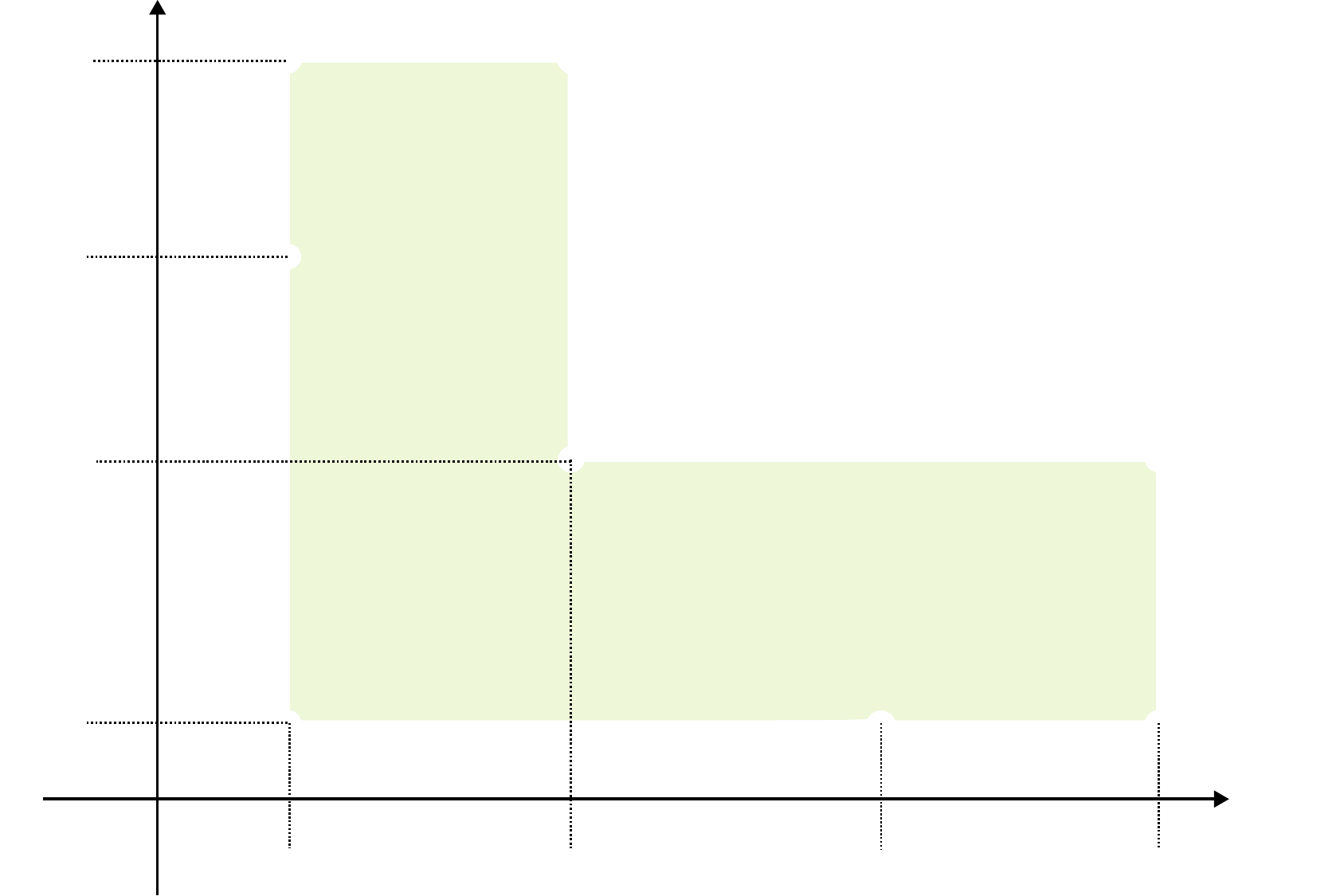
	\end{center}
	\caption{A class A singular $\dS$-torus
	$\mathcal{T}_{\theta,x,y}$.}
			\label{figure-gluingLshaped}
\end{figure}
\par We actually described all the de-Sitter structures
that will be studied in this text,
which are the \emph{class A} structures to be
defined below.\footnote{Note
that it is not known yet
if there exists de-Sitter tori
with a unique singularity which are not
class A.}
According to
\cite[Theorem 9.6]{mion-moutonRigiditySingularDeSitter2024},
any class A $\dS$-torus
of area $\theta$
with a unique singularity
is indeed isometric to one of the
tori $\mathcal{T}_{\theta,x,y}$
that we just constructed.

\section{Timelike geodesics
representatives
in free homotopy classes}
\label{section-uniquenessclosedtimelikegeodesics}
\subsection{Closed timelike geodesics}\label{subsection-existenceclosedtimelikegeodesics}
We introduce in this paragraph the primary object of interest
of this article:
\emph{closed geodesics} of singular de-Sitter surfaces.
\begin{definition}
\label{definition-closedgeodesic}
A \emph{geodesic loop}
of a singular de-Sitter surface $(S,\Sigma)$
is the image of
a closed continuous curve $\gamma\colon\intervalleff{0}{1}\to S$
such that $\gamma\restreinta_{\intervalleoo{0}{1}}$ is a
parametrized geodesic
of the regular de-Sitter surface $S^*\coloneqq S\setminus\Sigma$.
A \emph{closed geodesic} is a geodesic loop such that
$\gamma(0)=\gamma(1)\in S^*$ and
$\gamma'(1)=\gamma'(0)$.
A \emph{multiple} of a closed geodesic $\gamma$
is the closed geodesic $n\gamma$ obtained by travelling
$n$ times $\gamma$ for some $n\in\N^*$.
\end{definition}
Beware that
geodesic loops and closed geodesics are unparametrized
by default,
and that a closed geodesic
is $\cc^\infty$ in this paper
(which is sometimes called a $\cc^1$-closed geodesic
in the literature).
We also stress that
\emph{closed geodesics do not contain
any singularity}.

\subsubsection{Asymptotic cycles and
class A structures}
\label{subsubsection-timelikehomotopyclasses}
In contrast to the Riemannian case,
geodesics have different \emph{signatures}
in Lorentzian surfaces.
We would like to translate this
geometric distinction
homologically,
into a
necessary and sufficient condition
for a free homotopy class to contain
a causal curve.
To this end, we first need to
materialize homologically the lightlike foliations.
This will be in this text the role
of the \emph{oriented projective asymptotic cycle}
of a lightlike foliation $\mathcal{F}_{\alpha/\beta}$
of a de-Sitter structure
on the two-torus $\Tn{2}\coloneqq\R^2/\Z^2$,
which is a half-line
\begin{equation*}
 \label{equation-asymptoticcycle}
 A^+(\mathcal{F}_{\alpha/\beta})\in\mathbf{P}^+(\Homologie{1}{\R}{\Tn{2}})
\end{equation*}
in the homology.
Its \emph{non-oriented projective asymptotic cycle} is the line
$A(\mathcal{F}_{\alpha/\beta})\coloneqq
\R A^+(\mathcal{F}_{\alpha/\beta})$.
We will be interested in this article
with singular de-Sitter structures $\m$
whose lightlike foliations have distinct non-oriented
projective asymptotic cycles
$A(\mathcal{F}^\m_{\alpha})\neq A(\mathcal{F}^\m_{\beta})$,
which are called \emph{class A structures}.
\begin{remark}
 \label{remark-projctiveasymptoticcycle}
The lightlike foliations of a class
A structure $\m$ are
suspensions of circle homeomorphisms
(see \cite[Lemma 6.6]{mion-moutonRigiditySingularDeSitter2024}).
Let $S\subset\Tn{2}$ be any simple closed curve transverse
to $\mathcal{F}^\m_{\alpha/\beta}$.
The first-return map $P_{\alpha/\beta}$
of $\mathcal{F}^\m_{\alpha/\beta}$ on $S$
is a homeomorphism of the circle $S$
which describes entirely
$\mathcal{F}^\m_{\alpha/\beta}$
``locally'',
and whose main topological conjugacy invariant
is the \emph{rotation number}
$\rho(P_{\alpha/\beta})\in\Sn{1}$.
Observe however that the image
of $\mathcal{F}^\m_{\alpha/\beta}$
by a Dehn twist around $S$
is not isotopic to $\mathcal{F}^\m_{\alpha/\beta}$,
while keeping the same first-return map.
One therefore refines
the rotation number of $P_{\alpha/\beta}$
to a \emph{global topological isotopy invariant}
of $\mathcal{F}^\m_{\alpha/\beta}$,
detecting such Dehn twists,
which is its oriented projective asymptotic cycle
$A(\mathcal{F}_{\alpha/\beta})$.
We refer to
\cite[\S 5.2]{mion-moutonRigiditySingularDeSitter2024}
for more details.
\end{remark}
Since the asymptotic cycles are isotopy
invariant, the asymptotic
cycle of the isotopy class
$\mzero$ of a singular
de-Sitter structure $\m$ is well-defined.

\subsubsection{Timelike homotopy classes
and existence of closed timelike geodesics}
\label{subsubsection-timelikehomotopyclassesexistence}
Thanks to asymptotic cycles,
we can now interpretate the three
signatures of Lorentzian geometry
for homotopy classes.
\begin{definition}
\label{definition-timelikeconehomology}
 For any
 (isotopy class of)
 singular de-Sitter structure
 $\mzero$ of $\Tn{2}$,
 \begin{equation*}
 \label{equation-Cmu}
  \mathcal{C}^\mzero_+\coloneqq\Int(\Conv(A^+(\Fbeta^\mzero)\cup-A^+(\Falpha^\mzero)))
  \subset\Homologie{1}{\R}{\Tn{2}}
 \end{equation*}
is the \emph{future timelike cone in homology} of $\mzero$,
which is the convex hull of the half-lines
$A^+(\Fbeta^\mzero)$ and $-A^+(\Falpha^\mzero)$.
We also denote by $\mathcal{C}^\mzero_+\coloneqq-\mathcal{C}^\mzero_-$
the past timelike cone, and by
$\mathcal{C}^\mzero\coloneqq\mathcal{C}^\mzero_+\cup\mathcal{C}^\mzero_-$
the \emph{timelike cone in homology}.
We define likewise the \emph{spacelike cone in homology}
$\mathcal{C}^\mzero_{space}$ of $\mzero$.
\end{definition}
\begin{remarks}
\label{remark-timelikecone}
\begin{enumerate}
 \item One may note that $A^+(\Falpha^\mzero)$ and $A^+(\Fbeta^\mzero)$
are the oriented lightlike lines of
a unique (up to positive scalar multiplication)
Lorentzian quadratic form on
$\Homologie{1}{\R}{\Tn{2}}$,
of which $\mathcal{C}^\mzero$ is the timelike cone.
\item If $\mzero$ is a class B structure \emph{i.e.}
$A(\Falpha^\mzero)=A(\Fbeta^\mzero)$,
then either $\mathcal{C}^\mzero$ or $\mathcal{C}^\mzero_{space}$ is empty.
\end{enumerate}
\end{remarks}

The following existence result is proved
in \cite[Appendix A]{mion-moutonRigiditySingularDeSitter2024}.
It is a reformulation of Proposition A.8,
Corollary A.11
and Theorem A.17 of this article.
\begin{theorem}[{\cite[Appendix A]{mion-moutonRigiditySingularDeSitter2024}}]
\label{theorem-existence}
 Let $\m$ be a class A de-Sitter structure of $\Tn{2}$
 with a unique singularity.
 Then a free homotopy class $c$ of closed curves contains
 a closed timelike geodesic
 which maximizes the length among causal curves
 in $c$,
 if and only if
 $c$ belongs to the timelike cone $\mathcal{C}^\m$ in homology.
\end{theorem}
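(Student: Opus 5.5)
The plan is to prove the two implications separately. Necessity is soft and only uses the lightlike foliations. Sufficiency is a Lorentzian length-maximization argument in the style of Avez–Seifert, carried out in the universal cover $\R^2$ of $\Tn{2}$ and adapted to the conical singularity.

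\emph{Necessity.} Let $\gamma$ be a closed causal curve in the class $c$. We may assume it is future-directed. Outside of the singularity, $\gamma$ is transverse to $\Falpha^\m$ and $\Fbeta^\m$, or tangent to one of them along lightlike pieces. Since $\m$ is class A, both foliations are suspensions of circle homeomorphisms (Remark \ref{remark-projctiveasymptoticcycle}). We then compare the algebraic intersection of $c$ with the leaves against the asymptotic cycles $A^+(\Falpha^\m)$ and $A^+(\Fbeta^\m)$. Lift $\gamma$ to $\R^2$. Each lightlike leaf separates $\R^2$, and the lift of $\gamma$ crosses the $\alpha$- and $\beta$-leaves only in the directions prescribed by the time-orientation. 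Iterating the deck translation by $c$, the lift of $n\gamma$ therefore stays in the closed sector bounded by the leaves through its starting point. These leaves are at bounded distance from lines of directions $A(\Falpha^\m)$ and $A(\Fbeta^\m)$, so $c$ lies in the closure of $\mathcal{C}^\m$. Strictness holds as well. If $c$ were on a lightlike line of homology, the closed causal curve would have zero algebraic intersection with one foliation. For a suspension, a closed curve of that kind cannot be transverse to the foliation, hence cannot be timelike. In the lightlike case the curve would be a closed leaf, which is excluded in class A, or which at least forces the class to be on the boundary, as for $\Falpha^\m$ or $\Fbeta^\m$.

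\emph{Sufficiency.} Let $c\in\mathcal{C}^\m$, say in $\mathcal{C}^\m_+$.
\begin{enumerate}
\item First produce a closed timelike curve in $c$. Concatenate $\beta$-leaf segments and reversed $\alpha$-leaf segments whose displacements approximate $A^+(\Fbeta^\m)$ and $-A^+(\Falpha^\m)$ in the right proportions, close the path up, and push the corners slightly into the timelike cone.
\item Bound the length. Causal curves of the lift in the class $c$ between a point and its $c$-translate stay in a compact "causal diamond". The reason is that the lifted foliations are asymptotic to two distinct lines (class A), so the future cone of a point is contained in a uniformly thin sector. This gives a uniform Euclidean bound on such curves, hence a bound on their Lorentzian length, once a compact fundamental domain is fixed for the base point.
\item Take a maximizing sequence of closed causal curves in $c$. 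By the limit curve theorem (Arzel\`a–Ascoli plus upper semicontinuity of Lorentzian length), extract a closed causal limit $\gamma$ of maximal length $L>0$.
\item Show that $\gamma$ is a closed timelike geodesic avoiding the singularity. Away from the singularity, the reverse triangle inequality in convex $\dS$-neighbourhoods says that any corner or lightlike piece can be perturbed to strictly increase length. This forces $\gamma$ to be locally a timelike geodesic, and it is smooth at the base point because we maximize over free loops.
\end{enumerate}

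The main obstacle is the singular point $x$ of angle $\theta>0$ (Proposition \ref{proposition-GaussBonnet}). I would work in the standard cone $\dSsingularthetatilde$ and show that a causal curve through $\odSsingulartheta$ is never locally maximizing. Unfolding the cone, one side of the curve sees a total "timelike angle" exceeding the flat one by $\theta$. This should let us replace a small segment through $\odSsingulartheta$ by a broken geodesic going around the cone point which is strictly longer, using Proposition \ref{proposition-GaussBonnetBirmanNomizu} on a small triangle to compare lengths. I expect this local computation, together with the uniform compactness estimate of step 2, to carry the real difficulty.
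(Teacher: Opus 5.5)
\textbf{The necessity (strictness) step is wrong, and the boundary case is not handled.} The paper does not prove this statement itself. It quotes Proposition A.8, Corollary A.11 and Theorem A.17 of the earlier article. The way these are used later (closed timelike curves of a class of $\mathcal{C}^\m$ through every point, locally maximizing segments, maximization among causal curves) matches your Avez--Seifert sufficiency scheme. Your proof that $c\in\overline{\mathcal{C}^\m}$ is also fine.

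The gap is the \emph{strictness} step of your necessity part. Closed lightlike leaves are not excluded in class A. The paper itself treats the case where the $\alpha$-leaf of the singularity is closed, and mentions open subsets of $\Deftheta$ with closed lightlike leaves. Moreover, a closed curve with zero intersection with the asymptotic cycle can be transverse to a suspension that has a closed leaf with non-trivial holonomy. Concretely:
\begin{itemize}
\item In $\R^2$, each lifted $\alpha$-leaf meets each lifted $\beta$-leaf exactly once, since they stay at bounded distance from non-parallel lines. So $\R^2$ is the product of the two leaf spaces, future timelike curves are those increasing in both factors, and the deck transformation $T_c$ acts diagonally.
\item Let $c$ be the class of a closed $\alpha$-leaf with attracting or repelling holonomy. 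Up to replacing $c$ by $-c$, $T_c$ moves all $\beta$-leaves causally. On one side of the lifted closed leaf it also moves $\alpha$-leaves causally.
\item Hence $T_cp\in I^+(p)$ for $p$ on that side, and the boundary class $c$ contains closed timelike curves.
\end{itemize}

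The geodesic property does not rescue the argument by itself either. Suppose the annulus between two consecutive closed $\alpha$-leaves contains no singularity. Its holonomy $h$ is then necessarily hyperbolic. Its boundary leaves develop onto half-geodesics of the two $h$-invariant $\alpha$-geodesics, and the $\beta$-segments crossing it develop onto the $\beta$-segments joining them. This identifies the annulus with the quotient by $h$ of a lightlike square $\overline{J}\times I\subset(\partial_\infty^+\dS)^{(2)}$, where $I,J$ are the arcs cut out by the fixed points of $h$. That square contains an $h$-invariant timelike geodesic. So the annulus contains a closed timelike geodesic of class $c$. It maximizes length among causal curves of class $c$ in that annulus, and closed causal curves of class $c$ cannot cross closed $\alpha$-leaves. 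Hence, unless you prove that such annuli never occur, the rational boundary case needs an argument, or a restriction of the statement, that your proposal does not contain.

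\textbf{Two lesser points in the sufficiency part.} First, in step 1, ``close the path up'' does not follow from approximate displacements. The product structure above gives it directly. For $c\in\mathcal{C}^\m_+$, $T_c$ has translation number of the causal sign on both leaf spaces. If $T_c(a)\le a$ for a single $a$, then $T_c^k(a)\le a$ for all $k$, which contradicts this. So $T_cp\in I^+(p)$ for \emph{every} $p$; this is how the paper uses Proposition A.8.(1). Second, the local statement at the cone point, which you only announce, is exactly what is delegated to Appendix A of the earlier article. Your heuristic is right. In the flat model, take $q_\pm$ at unit distance in the past and future of the vertex, on the timelike line containing one edge of the removed wedge. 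The path through the vertex has length $2$, while the geodesic from $q_-$ to $q_+$ passing on the wedge side has length $2\cosh(\theta/2)>2$.
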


\subsection{Uniqueness of closed timelike geodesics}\label{subsection-uniquenessclosedtimelikegeodesics}
We make the naive observation that for
any base-point $x\in\Tn{2}$,
the natural map sending a homotopy class
$a\in\pi_1(\Tn{2},x)$
relative to $x$ to the free homotopy class
containing $a$ is an isomorphism.\footnote{Because
$\pi_1(\Tn{2},x)\simeq\Z^2$ is abelian.}
We therefore identify through these maps
the group of free homotopy classes with the
fundamental groups of $\Tn{2}$, which
we denote by $\piun{\Tn{2}}\simeq\Z^2$.
A free homotopy class $a\in\piun{\Tn{2}}$
is said \emph{primitive} if it is not a non-trivial multiple,
\emph{i.e.} if it belongs to
$\piun{\Tn{2}}_*\simeq\Z^2_*\coloneqq\Z^2\setminus\N^*\Z^2$.
We recall that a free homotopy class of $\piun{\Tn{2}}$
is primitive if and only if
it contains a \emph{simple} closed curve
(\emph{i.e.} a closed curve without self-intersections).
We will denote by $[\gamma]$ the free homotopy class of a closed
curve $\gamma$.
\par Unlike the case of hyperbolic surfaces,
we need to show that closed geodesics
in primitive homotopy classes are simple
\emph{before} proving the uniqueness.
\begin{lemma}
\label{lemma-simpletimelikegeodesics}
Let $\gamma$ be a definite closed geodesic of a class A
de-Sitter
torus $T$
with a single singularity.
If the free homotopy class of $\gamma$
in $\piun{T}$
is primitive,
then $\gamma$ is simple.
\end{lemma}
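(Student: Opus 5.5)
The plan is to argue in the universal cover $\tilde T\simeq\R^2$, where the lifts of the singularity form a $\piun{T}\simeq\Z^2$-orbit $\tilde\Sigma$. We use that $\piun{T}$ is abelian, and we rule out the resulting configuration with the Gau{\ss}-Bonnet formulas of Propositions \ref{proposition-GaussBonnetBirmanNomizu} and \ref{proposition-GaussBonnet}. Let $\azero=[\gamma]$, which is primitive, and let $\tilde\gamma$ be a lift of $\gamma$. Then $\tilde\gamma$ is a complete definite geodesic of $\tilde T\setminus\tilde\Sigma$, and it is invariant under the deck translation $\azero$.

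\emph{Step 1: topological reduction.} The first claim to establish is that $\tilde\gamma$ is embedded. If it were not, $\gamma$ would have a self-intersection coming from a point $p\in\tilde\gamma\cap\tilde\gamma$ with two parameters differing by a non-integer number of periods; one sub-arc of $\tilde\gamma$ would then be a geodesic monogon. Excluding monogons reduces the question to the following. Since $\gamma$ is not simple but $\tilde\gamma$ is embedded, there is a deck transformation $h\notin\langle\azero\rangle$ such that $h\tilde\gamma$ meets $\tilde\gamma$ at some point $p$. Because $\piun{T}$ is abelian, $h\tilde\gamma$ is also $\azero$-invariant. Hence $\azero p$ lies in both curves as well, and the arcs $[p,\azero p]$ of $\tilde\gamma$ and of $h\tilde\gamma$ form a closed curve. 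After possibly replacing $p$ by the next intersection point along $\tilde\gamma$, we obtain an embedded geodesic bigon or monogon $D\subset\tilde T$ bounding a disc. This disc may contain finitely many points of $\tilde\Sigma$.

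\emph{Step 2: excluding bigons and monogons.} First suppose $D$ contains no singular point. Then the developing map is injective on a neighbourhood of $D$. This is obtained by analytic continuation on the disc: the map is a local homeomorphism on a simply connected compact region whose boundary is developed injectively. Under this map the two boundary arcs are sent to two definite geodesics of $\dS$, that is, to arcs of projective lines of $\mathbf{P}^+(\R^{1,2})$. By Paragraph \ref{subsubsection-geodesicsprojectivedS}, two distinct definite geodesics of $\dS$ meet in at most one point, since the second intersection point of the two projective lines lies on the opposite component. This forbids a bigon. A monogon is forbidden because a definite geodesic of $\dS$ is embedded.

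When $D$ contains singular points, the plan is to combine the Birman–Nomizu formula with the singular Gau{\ss}-Bonnet formula, after cutting $D$ along geodesic segments through the singular points. The resulting relation should read
\begin{equation*}
\mathcal{A}(D)=\nu_1+\nu_2+\sum_{x\in D\cap\tilde\Sigma}\theta_x .
\end{equation*}
In the bigon case obtained from $p$ and $\azero p$, the two vertex angles are exchanged by $\azero$ with reversed orientation, so $\nu_1+\nu_2=0$ by \eqref{equation-relationsangle}.

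To reach a contradiction, the next step is to compare $D$ with the strip between $\tilde\gamma$ and $h\tilde\gamma$. This strip projects onto an annulus in $T$ whose area is controlled by the total area $\theta$, which equals the angle of the unique singularity. Since $T$ has only one singularity, the strip contains at most one singular point per $\azero$-period. Cutting $T$ along $\gamma$ and applying Proposition \ref{proposition-GaussBonnet} to the pieces should then force $D$ to have zero area, which is absurd.

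\emph{Main obstacle.} I expect the bookkeeping of Step 2 in the presence of singular points inside $D$ to be the hardest part. This means getting the signs of the Lorentzian exterior angles right for definite geodesics crossing in different quadrants, and getting the contribution of positive-angle singularities to the area right. My first attempt would be to avoid Gau{\ss}-Bonnet altogether, as follows. Choose the bigon $D$ innermost. Then use the class A hypothesis, in the form that the lightlike foliations are suspensions (Remark \ref{remark-projctiveasymptoticcycle}), to push $D$ off the singular point along lightlike leaves. Finally apply the developing-map argument to the resulting regular region.
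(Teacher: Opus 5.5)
Your proposal has a genuine gap in the only nontrivial case: a bigon or monogon whose disc contains singular points. You work in the universal cover $\tilde{T}$ of the closed torus, so the innermost disc $D\subset\tilde{T}$ you produce may contain several lifts of the singularity. In that case neither your developing-map argument nor your exclusion of monogons applies, because the $\dS$-structure on $D$ minus its singular points has non-trivial holonomy around each of them.

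The Gau{\ss}-Bonnet route does not supply the missing contradiction. Each enclosed singularity contributes a positive $\theta$, and the Lorentzian exterior angles of a timelike bigon have no sign constraint, so $\mathcal{A}(D)=\nu_1+\nu_2+\sum_x\theta_x$ is not absurd. Your cancellation $\nu_1+\nu_2=0$ relies on the vertices being $p$ and $\azero p$. Once you move $p$ to obtain an embedded bigon, the two vertices are no longer related by $\azero$. Even with cancellation you would only get $\mathcal{A}(D)=\sum_x\theta_x>0$.

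The ``strip between $\tilde\gamma$ and $h\tilde\gamma$'' is not defined, since these lines cross, and nothing in your plan forces zero area. Finally, pushing $D$ off the singular points along lightlike leaves destroys the geodesic nature of its edges, which is exactly what the developing-map argument needs.

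The paper avoids this as follows.
\begin{itemize}
\item It applies \cite[Theorem 2.7]{hassIntersectionsCurvesSurfaces1985} in $T$ itself, not in a cover. This yields a $1$-gon or $2$-gon embedded in $T$, whose disc therefore contains at most the unique singularity.
\item A $1$-gon is a homotopically trivial closed timelike curve. It is excluded by \cite[Corollary A.6]{mion-moutonRigiditySingularDeSitter2024}, whatever it encloses.
\item For a $2$-gon around the singularity $p$, the paper uses the local model rather than areas, and this is the idea you are missing. Since $D$ is simply connected and $p$ is its only singular point, a singular chart at $p$ extends to an embedding of a neighbourhood of $D$ into $\dSsingularthetatilde$, where $\theta>0$ is the angle. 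In $\dSsingularthetatilde\setminus\{\odSsingulartheta\}$, two distinct timelike geodesics meet at most once. Your regular-case argument is just the case $\theta=0$ of this.
\end{itemize}

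A further point: your justification that two distinct definite geodesics of $\dS$ meet at most once is valid only for timelike geodesics. A spacelike projective line lies entirely in $\dS$, so two distinct spacelike geodesics meet at two antipodal points. The paper likewise carries out only the timelike case explicitly.
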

\begin{proof}
We can assume that $\gamma$ is timelike to fix ideas,
the arguments being the same in the spacelike case
by inverting the metric.
Assume by contradiction
that the timelike closed geodesic
$\gamma\subset T$ is not simple.
Since $\gamma$ is
freely homotopic to a simple closed curve
by assumption,
it must then contain a non-trivial
embedded $1$-gon or $2$-gon
according to \cite[Theorem 2.7 p.92]{hassIntersectionsCurvesSurfaces1985}.
A $1$-gon in $\gamma$ would be a
homotopically trivial
timelike closed curve,
which is forbidded by
\cite[Corollary A.6]{mion-moutonRigiditySingularDeSitter2024}.
There exists therefore a closed topological disk $D$ in $T$
whose boundary is the union of two distinct
simple timelike geodesic
segments $a$ and $b$, such that
$a\cap b=\partial a=\partial b$.
Assume that $D$ contains
the unique singularity $p$ of $T$,
of positive angle $\theta$.
Since $p$ is the unique singularity of $D$
and $D$ is simply connected,
any singular $\dS$-chart at $p$
extends to an open neighborhood $U$ of $D$, to give
a topological embedding
$\varphi\colon U\to\dSsingularthetatilde$
which is a $\dS$-chart outside of $p$ and
such that $\varphi(p)=\odSsingulartheta$.
This is due to
\cite[Lemma 3.5]{mion-moutonRigiditySingularDeSitter2024}
in the one hand,
and on the other hand
to the fact that any class A $\dS$-torus
is isometric to a torus
$\mathcal{T}_{\theta,x,y}$ as described
in Figure \ref{figure-gluingLshaped},
according to
\cite[Theorem 9.6.(1)]{mion-moutonRigiditySingularDeSitter2024}.
If $D$ does not contain the singularity,
then such a $\dS$-chart $\varphi$
also exist and we denote it in the same way
with $\theta=0$.
The image by $\varphi$ of $\partial D$ gives
two distinct simple timelike geodesic segments
$a_0$ and $b_0$ in $\dSsingularthetatilde\setminus\{\odSsingulartheta\}$,
which meet two times at their extremities.
But one easily observe that two distinct geodesics in
$\dStilde$ or
$\dSsingularthetatilde\setminus\{\odSsingulartheta\}$
meet at most one
(see for instance
\cite[Figure A.1]{mion-moutonRigiditySingularDeSitter2024}).
This contradiction concludes the proof.
\end{proof}

\subsubsection{Asymptotic points of
timelike geodesics
in the de-Sitter space}
\label{subsubsection-geodesicdeSitterpœlane}
Any timelike geodesic $l$ of $\dS$ has a unique
\emph{future limit point $l^+\in\partial_\infty^+\dS$}
(respectively \emph{past limit point
$l^-\in\partial_\infty^-\dS$})
in $\mathbf{P}^+(\R^{1,2})$.
Moreover $l^+\neq -l^-$
and $l$ is uniquely described by
$(l^+,l^-)$:
$(l_1^+,l_1^-)=(l_2^+,l_2^-)$
if and only if $l_1=l_2$.
In other words, the space of timelike geodesics
of $\dS$ identifies with $\dS$
(by taking the orthogonal in $\R^{1,2}$),
hence with the space of oriented geodesics of $\Hn{2}$.
\par Let $L_1$ and $L_2$ be
two timelike planes of $\R^{1,2}$,
and $L_i^\pm$ be the four corresponding timelike geodesics
of $\dS$.
The three possible configurations
of $L_1$ and $L_2$
are the following.
\begin{enumerate}
 \item \emph{$L_1\cap L_2$ is lightlike},
 and contains thus one future and one past
 half-line $(L_1\cap L_2)^\pm$.
 \begin{itemize}
  \item The closure in $\mathbf{P}^+(\R^{1,2})$ of
  one of the components
  $L_1^+$ (respectively $L_1^-$) of $L_1$
  meets the closure of one of the components
  $L_2^+$ (resp. $L_2^-$) of $L_2$
  along $(L_1\cap L_2)^+$
  (resp. $(L_1\cap L_2)^-$).
  The timelike geodesics
  $L_1^\pm$ and $L_2^\pm$ of $\dS$
  are said \emph{future asymptotic}
  (resp. \emph{past asymptotic}).
  \item The closure of
  $L_1^+$ does not meet $L_2^-$
  (resp. of $L_1^-$ does not meet $L_2^+$).
 \end{itemize}
\item \emph{$L_1\cap L_2$ is spacelike},
and we denote by $(L_1\cap L_2)^\pm$
its two half-lines.\footnote{Observe that the sign
$\pm$ is here arbitrary
and has no meaning of time-orientation.}
\begin{itemize}
  \item One of the components
  $L_1^+$ (respectively $L_1^-$)
  meets the corresponding
  $L_2^+$ (resp. $L_2^-$)
  in $\dS$
  along $(L_1\cap L_2)^+$
  (resp. $(L_1\cap L_2)^-$).
  \item The closure of
  $L_1^+$ does not meet $L_2^-$
  (resp. of $L_1^-$ does not meet $L_2^+$).
 \end{itemize}
\item \emph{$L_1\cap L_2$ is timelike}.
Then the four connected component's closures
of $L_1$ and $L_2$ are pairwise disjoint.
\end{enumerate}

In hyperbolic surfaces,
the uniqueness
of geodesics
in their free homotopy class
follows from the classical fact
that two geodesics of the hyperbolic plane
which remain at a bounded distance
must be equal.
In the de-Sitter space $\dS$,
we will replace this argument
by a phenomenon of \emph{spacelike connectedness}
between asymptotic timelike geodesics.
\begin{lemma}
\label{lemma-timelikegeodesicdSfuturasymptotic}
 Let $l_1$, $l_2$ be two
 future timelike geodesics of $\dS$
 such that for any $p_i\in l_i$,
 there exists a spacelike geodesic segment joining
 $\intervallefo{p_1}{+\infty}_{l_1}$ and
 $\intervallefo{p_2}{+\infty}_{l_2}$.
 Then $l_1$ and $l_2$ are
 future asymptotic.
 The obvious analogous claim holds in the past.
\end{lemma}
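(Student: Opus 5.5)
The idea is to work in the projective model $\mathbf{P}^+(\R^{1,2})$. I will show that if the future limit points $l_1^+,l_2^+\in\partial_\infty^+\dS$ are distinct, then far enough in the future no two points of $l_1$ and $l_2$ can be joined by a spacelike geodesic segment. Conversely, if $l_1^+=l_2^+$, then the timelike planes $L_1,L_2$ spanned by $l_1,l_2$ share the future lightlike half-line $l_1^+$. Their intersection is then lightlike, and the closures of $l_1$ and $l_2$ meet along $(L_1\cap L_2)^+$, which is exactly the definition of future asymptotic in configuration (1) of the preceding paragraph. So it suffices to derive a contradiction from $l_1^+\neq l_2^+$.

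\emph{Step 1: algebraic criterion for spacelike joinability.} Represent points $x_1,x_2\in\dS$ by unit spacelike vectors, $q_{1,2}(x_i)=1$. The projective line through $x_1,x_2$ is spacelike exactly when the plane $\mathrm{span}(x_1,x_2)$ is spacelike. This holds if and only if its Gram determinant $1-\langle x_1,x_2\rangle^2$ is positive, \emph{i.e.} $\abs{\langle x_1,x_2\rangle}<1$. Since spacelike geodesics are precisely the spacelike projective lines, the points $x_1,x_2$ are joined by a spacelike geodesic segment if and only if $\abs{\langle x_1,x_2\rangle}<1$.

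\emph{Step 2: asymptotics near the future boundary.} Let $u_i$ be future lightlike vectors representing $l_i^+$. Since $u_1,u_2$ are distinct future lightlike directions, $\langle u_1,u_2\rangle<0$. For $x$ on the ray $\intervallefo{p_i}{+\infty}_{l_i}$, write $x=t\,y$, where $y$ has Euclidean norm one and $t=q_{1,2}(y)^{-1/2}$. As $x$ goes to the future end of $l_i$, $y$ tends to $u_i/\|u_i\|$, so $q_{1,2}(y)\to0$ and hence $t\to+\infty$. Choose $p_1,p_2$ far enough in the future that:
\begin{itemize}
\item every $y$ coming from $\intervallefo{p_i}{+\infty}_{l_i}$ lies in a small neighbourhood of $u_i/\|u_i\|$, so that $\langle y_1,y_2\rangle\leq c<0$ for a uniform constant $c$;
\item the corresponding scalings satisfy $t_i\geq T$, with $T^2\abs{c}>1$.
\end{itemize}
Then $\abs{\langle x_1,x_2\rangle}=t_1t_2\abs{\langle y_1,y_2\rangle}>1$ for all $x_i\in\intervallefo{p_i}{+\infty}_{l_i}$. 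By Step 1 no spacelike segment joins the two rays, contradicting the hypothesis.

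\emph{Past case and main obstacle.} The past statement follows by the same argument with past limit points, or by applying the time-reversing symmetry. The only delicate point I expect is the uniformity in Step 2: both the negativity bound on $\langle y_1,y_2\rangle$ and the blow-up of the scalings must hold simultaneously on whole rays. Continuity of $q_{1,2}$ and of the bilinear form on the compact Euclidean sphere handles this once $p_i$ is chosen close enough to $l_i^+$ in $\mathbf{P}^+(\R^{1,2})$.
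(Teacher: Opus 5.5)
Your proof is correct, and it takes a different route from the paper. The paper argues synthetically. It first disposes separately of the case where $l_1\neq l_2$ lie on the same timelike projective line. Then, for $L_1\neq L_2$ not future asymptotic, it picks a lightlike projective line $L$ meeting $l_1$ at $p_1$ and $l_2$ at $p_2$, and uses it as a causal barrier: no spacelike geodesic meets both open rays beyond $p_1$ and $p_2$, a fact it leaves to the reader as ``one checks easily''. You replace this barrier by two explicit ingredients: the criterion that $x_1,x_2$ can be spacelike-joined only if $\abs{\langle x_1,x_2\rangle}<1$, and the blow-up $\langle x_1,x_2\rangle\to-\infty$ when the points tend to distinct points of $\partial_\infty^+\dS$. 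This has three advantages. The argument becomes quantitative and fully checkable. It removes the case distinction, since opposite components of a timelike plane have distinct future endpoints and are therefore covered automatically. It also proves more: $\langle x_1,x_2\rangle<-1$ places $x_1,x_2$ on opposite components of a timelike projective line, so far out the two rays are joined by no geodesic at all. The paper's version is shorter and fits its picture-based causal reasoning, at the cost of steps left unverified.

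There are two harmless imprecisions. First, the equivalence in Step 1 fails for $x_2=\pm x_1$: for $x_2=-x_1$ one has $\langle x_1,x_2\rangle=-1$, yet a spacelike half great circle joins them. You only use the implication $\abs{\langle x_1,x_2\rangle}>1\Rightarrow$ not spacelike-joinable, which is sound, since it forces linear independence and a timelike span. Second, in your converse, $L_1\cap L_2$ is lightlike only if $L_1\neq L_2$. The remaining case $L_1=L_2$ gives $l_1=l_2$, which the paper also sets aside.
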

\begin{proof}
Let us denote by $L_i$ the timelike projective
line of $\mathbf{P}^+(\R^{1,2})$
containing $l_i$.
We observe first that $L_1\neq L_2$.
Indeed if $L_1=L_2$ but $l_1\neq l_2$
by contradiction,
then with $s_0\subset\dS$
a spacelike geodesic intersecting each $l_i$
at some point $p_i$,
one notice that
any spacelike geodesic $s_1$
intersecting
$\intervalleoo{p_1}{+\infty}_{l_1}$
would not intersect
$\intervalleoo{p_2}{+\infty}_{l_2}$,
which would contradict our assumption.
We now reason by contraposition,
assuming that $l_1$ and
$l_2$ are \emph{not}
future asymptotic.
Since $L_1\neq L_2$,
there exists a lightlike projective line
$L$ of $\mathbf{P}^+(\R^{1,2})$
which intersects $l_1$ and $l_2$
at respective points $p_1$ and $p_2$.
One check then easily that
any spacelike geodesic $s\subset\dS$
which intersects $\intervalleoo{p_1}{+\infty}_{l_1}$
will not intersect
$\intervalleoo{p_2}{+\infty}_{l_2}$.
This concludes the proof of the contrapositive of the lemma.
\end{proof}

\subsubsection{Proof of Proposition \ref{theoremintro-existenceuniquenessgeodesics}}
\label{subsubsection-proofuniqueness}
We first show the uniqueness
of closed geodesics in primitive homotopy classes.
\begin{lemma}
\label{lemme-uniquenesssimpleclosedgeodesics}
Let $\gamma_1$ and $\gamma_2$ be two
definite closed geodesics of a class A
de-Sitter torus $T$
with a single singularity,
having primitive free homotopy classes
in $\piun{T}$.
If $\gamma_1$ and $\gamma_2$ are freely homotopic
in $T$,
then $\gamma_1=\gamma_2$ as unparametrized geodesics.
\end{lemma}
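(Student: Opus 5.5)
We argue as in the hyperbolic case, with bounded distance replaced by the spacelike connectedness of Lemma \ref{lemma-timelikegeodesicdSfuturasymptotic}. We treat the timelike case; the spacelike case is identical after inverting the metric. By Lemma \ref{lemma-simpletimelikegeodesics}, $\gamma_1$ and $\gamma_2$ are simple. Since they are freely homotopic, there are lifts $\tilde\gamma_1,\tilde\gamma_2$ to the universal cover $\tilde T$ (which carries the singularity lifts) that are invariant under the same deck transformation $a\in\piun{T}$, the common primitive class. Suppose $\gamma_1\neq\gamma_2$. We distinguish two cases: $\gamma_1$ and $\gamma_2$ intersect, or they are disjoint.

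\textbf{Disjoint case.} Two disjoint simple closed curves in the same primitive class bound an annulus $A\subset T$ whose boundary consists of the two timelike geodesics. If $A$ contains no singularity, Proposition \ref{proposition-GaussBonnet} gives $\mathcal{A}(A)=0$, which is absurd since a non-degenerate annulus has positive area. Otherwise $A$ contains the unique singularity, and we pass to the complementary annulus $T\setminus\Int A$, which has the same boundary and no singularity; it again has area zero, a contradiction. Alternatively, and closer to the preparatory lemma, one can develop the regular annulus: its universal cover develops into $\dStilde$, and the two boundary lifts are timelike geodesics invariant under the same hyperbolic holonomy $\hol(a)$. Because the strip between them is foliated by spacelike segments joining them arbitrarily far in the future and in the past, Lemma \ref{lemma-timelikegeodesicdSfuturasymptotic} forces them to be both future and past asymptotic. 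Two timelike geodesics sharing both endpoints at infinity coincide, since $(l^+,l^-)$ determines $l$. This gives the contradiction.

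\textbf{Intersecting case.} Here $\tilde\gamma_1$ and $\tilde\gamma_2$ are distinct $a$-invariant lifts that meet. If they meet once, $a$-invariance makes them meet infinitely often. Consecutive intersection points then bound a bigon in $\tilde T$. We would like this bigon to be embedded and to have area at most $\theta$ around a single lift of the singularity, and to rule it out as in the proof of Lemma \ref{lemma-simpletimelikegeodesics}: develop it into $\dSsingularthetatilde$ via a singular chart, and use the fact that two distinct geodesics there meet at most once.

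The main obstacle is this intersecting case. In $\tilde T$ the bigon could enclose several lifts of the singularity, so the chart argument does not apply directly. What we will try first is a Hass--Scott-type reduction showing that two simple closed curves in the same primitive class of the torus that intersect minimally must form an embedded bigon in $T$ itself. An embedded bigon in $T$ contains at most one singular point, and the argument of Lemma \ref{lemma-simpletimelikegeodesics} then applies verbatim. If this reduction fails, the fallback is to cut $T$ along $\gamma_1$ and use Theorem \ref{theorem-existence}: maximality of the length among causal curves shows that surgery at intersection points would produce a causal curve in the same class longer than the maximum.
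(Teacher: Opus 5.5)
Your proof is correct once the intersecting case is stated as a theorem instead of something to ``try'', and it takes a genuinely different route from the paper.

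\textbf{Your argument.}
\begin{itemize}
\item \emph{Disjoint case.} The complementary annulus that avoids the singularity is a compact regular $\dS$-surface with timelike geodesic boundary and non-empty interior. Proposition~\ref{proposition-GaussBonnet} with $n=0$ gives it zero area, which is absurd.
\item \emph{Intersecting case.} Crossings occur at regular points, since closed geodesics avoid the singularity. They are transverse, since two distinct geodesics cannot be tangent at a regular point, and hence finite in number. Freely homotopic curves on the torus have geometric intersection number zero, so $\gamma_1$ and $\gamma_2$ are not in minimal position. The bigon criterion (Hass--Scott) then gives an embedded bigon in $T$ itself. It contains at most the one singularity, so the chart argument of Lemma~\ref{lemma-simpletimelikegeodesics} applies verbatim.
\end{itemize}

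\textbf{Corrections.} Replace ``that intersect minimally'' by ``that intersect at all, hence are not in minimal position''. Drop three detours:
\begin{itemize}
\item the bigons in $\tilde T$, which you do not need;
\item the alternative developing argument in the disjoint case, where the claimed foliation of the strip by spacelike segments is not justified;
\item the fallback, since Theorem~\ref{theorem-existence} only provides \emph{some} maximizing geodesic in the class, not that $\gamma_1$ and $\gamma_2$ both maximize.
\end{itemize}

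\textbf{The paper's route.} The paper does not split into cases. It uses Nielsen's theorem on primitive elements of the free group $\piun{T_*}$ to show that $\gamma_1$ and $\gamma_2$ are freely homotopic in $T_*=T\setminus\{\zero\}$. Hence they lie in an annulus avoiding the singularity. It then imports from Appendix A of the earlier paper a spacelike geodesic segment joining them, and develops. Lemma~\ref{lemma-timelikegeodesicdSfuturasymptotic} shows that both lifts develop onto the same timelike geodesic of $\dS$. It concludes by injectivity of the developing map on a lift of the fundamental domain $\mathcal{L}_{\theta,x,y}$.

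\textbf{Comparison.} Your route is shorter and uses only tools already stated in the paper: Gauss--Bonnet and the bigon argument of Lemma~\ref{lemma-simpletimelikegeodesics}. It bypasses Nielsen's theorem and the appendix machinery. It also shows transparently why uniqueness can fail with several singularities: then both complementary annuli may carry singular points. The paper's route is the literal analogue of the hyperbolic bounded-distance argument. It showcases spacelike connectedness as the de-Sitter replacement for that tool, which is one of the paper's stated aims.
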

\begin{proof}
Denoting by $\zero\in T$ the singularity,
the fundamental group of
$T_*\coloneqq T\setminus\{\zero\}$
(based at any point)
is a free group on two generators,
and the inclusion $T_*\subset T$
induces a morphism
$\psi\colon\piun{T_*}\to\piun{T}\simeq\Z^2$
which is the abelianization.
The homotopy classes of simple closed curves
in $T_*$
are exactly the \emph{primitive} elements
of $\piun{T_*}$
(namely the ones that can be completed
to form a basis
of the free group $\piun{T_*}$),
and two primitive homotopy classes
of $\piun{T_*}$
belong to the same free homotopy class of curves
in $T_*$
if and only if
they are conjugated in $\piun{T_*}$.
Since $\gamma_1$ and $\gamma_2$ are simple closed curves
according to Lemma \ref{lemma-simpletimelikegeodesics},
their homotopy classes are primitive
in $\piun{T_*}$.
Since they are freely homotopic in $T$
by assumption, they have moreover the same image
by the abelianization $\psi$.
A Theorem of Nielsen \cite{nielsen_isomorphismen_1917}
shows then that the homotopy classes of
$\gamma_1$ and $\gamma_2$ in $T_*$
are conjugated
(see \cite[Corollary 3.2 p.20]{osbornePrimitivesFreeGroup1981}),
\emph{i.e.} that $\gamma_1$ and $\gamma_2$
are freely homotopic in $T_*$.
\par As in the proof of Lemma \ref{lemma-simpletimelikegeodesics},
we can assume that $\gamma_1$ and $\gamma_2$ are
timelike to fix ideas,
up to inverting the metric.
Since $\gamma_1$ and $\gamma_2$
are freely homotopic in $T_*$,
they are contained in
a closed annulus $A\subset T_*$ bounded by two
closed timelike curves
(freely homotopic to $\gamma_1$ and $\gamma_2$).
The proof of the existence Theorem
of closed definite geodesics
shows the existence of a spacelike
geodesic segment $\eta\subset A$
joining $\gamma_1$ and $\gamma_2$.
More precisely,
\cite[Lemma A.15, Proposition A.16
and Theorem A.17]{mion-moutonRigiditySingularDeSitter2024}
show the existence
of such a spacelike
locally maximizing segment $\eta\subset A$,
and
\cite[Proposition A.10]{mion-moutonRigiditySingularDeSitter2024}
shows that $\eta$ is a geodesic segment
(since it does not contain any singularity).
We fix a connected component
$\tilde{A}\subset\tilde{T}_*$
of the lift of the annulus $A\subset T_*$
in the universal cover $\tilde{T}_*$.
This topological band contains one lift
$\tilde{\gamma}_i$ of each of the two
closed timelike geodesics $\gamma_i$.
Let $\delta\colon\tilde{T}_*\to\dS$
be a developing map of the $\dS$-structure of $T_*$.
The anti-Riemannian metrics of
$\gamma_1$ and $\gamma_2$ coming
from the metric of $T_*$
are complete since the $\gamma_i$'s are compact,
and $\tilde{\gamma}_i$ is therefore complete as well.
The restriction of $\delta$ is a
local isometry from the complete
timelike geodesic
$\tilde{\gamma}_i$ to a
timelike geodesic $l_i$ of $\dS$,
and such a map has to be surjective onto $l_i$.
Indeed for any $p\in\tilde{\gamma}_i$ and any
$q'\in l_i$ in the future of $\delta(p)$,
the distance $r$ from $\delta(p)$ to $q'$
along $l_i$ is finite.
There exists thus by completeness of
$\tilde{\gamma}_i$ a point $q\in\tilde{\gamma}_i$
at distance $r$ in the future of $p$, and
we have then $\delta(q)=q'$
which shows that $\delta(\tilde{\gamma}_i)=l_i$.
\par Let $p'_1=\delta(p_1)\in l_1$ and $p'_2=\delta(p_2)\in l_2$
be any two points.
There exists then a lift $\tilde{\eta}\subset\tilde{A}$
of $\eta$ going from
the future interval
$\intervallefo{p_1}{+\infty}_{\tilde{\gamma}_1}$
of $\tilde{\gamma}_1$ starting at $p_1$
to the future interval
$\intervallefo{p_2}{+\infty}_{\tilde{\gamma}_2}$
of $\tilde{\gamma}_2$ starting at $p_2$.
The image of $\tilde{\eta}$ by $\delta$ is a spacelike geodesic
segment of $\dS$ going from
$\intervallefo{p'_1}{+\infty}_{l_1}$ to
$\intervallefo{p'_2}{+\infty}_{l_2}$.
Since such a geodesic segment exists for any two
$p'_1\in l_1$ and $p'_2\in l_2$,
Lemma \ref{lemma-timelikegeodesicdSfuturasymptotic}
shows that $l_1$ and $l_2$
are future asymptotic.
The obvious analogous reasoning shows that
$l_1$ and $l_2$
are past asymptotic,
and therefore that $l_1=l_2$ since the
future and past limit points of a timelike geodesic
of $\dS$ entirely determine it.
\par Since the de-Sitter structure of $T$
is class A, $T$ is isometric to a singular de-Sitter
torus $\mathcal{T}_{\theta,x,y}$
described in Figure \ref{figure-gluingLshaped}
according to
\cite[Theorem 9.6.(1)]{mion-moutonRigiditySingularDeSitter2024}.
The lightlike polygon $\mathcal{L}_{\theta,x,y}$
being a fundamental domain for
$T$, its interior lifts
in $\tilde{T}_*$
to an open set
$\widetilde{\mathcal{L}}$ on which
the developing map $\delta$ is injective.
We can choose a lift $\widetilde{\mathcal{L}}$
intersecting the band $\tilde{A}$,
hence containing open intervals
of $\tilde{\gamma}_1$ and $\tilde{\gamma}_2$.
Since we have shown that
$\delta(\tilde{\gamma}_1)=\delta(\tilde{\gamma}_2)$
and $\delta\restreinta_{\widetilde{\mathcal{L}}}$
is injective,
this shows that
$\tilde{\gamma}_1$ and $\tilde{\gamma}_2$
admit a common open interval,
and therefore that
$\tilde{\gamma}_1=\tilde{\gamma}_2$.
Hence $\gamma_1=\gamma_2$,
which concludes the proof of the lemma.
\end{proof}

We can now conclude the
proof of Proposition \ref{theoremintro-existenceuniquenessgeodesics}.
\begin{proof}[Proof of Proposition \ref{theoremintro-existenceuniquenessgeodesics}]
As previously,
we can assume that $\gamma$ is timelike to fix ideas.
If the timelike free homotopy class $c$ is primitive,
the claim is a consequence of
the existence Theorem \ref{theorem-existence},
and of Lemmas \ref{lemma-simpletimelikegeodesics}
and \ref{lemme-uniquenesssimpleclosedgeodesics}.
Assume now that $c=nc_0$ with $c_0$ a primitive class and $n\geq 2$.
The existence being given by the primitive case,
it suffices to show that any geodesic $\gamma$
in the free homotopy class $c$ has to be a multiple.
Indeed, such a $\gamma$ is then of the form $\gamma=n\gamma_0$
with $\gamma_0$ in the primitive class $c_0$,
and the uniqueness of $\gamma$ in the class $c$
follows thus from the one
of $\gamma_0$ in the primitive class $c_0$
(which was already proved).
Let us assume by contradiction that $\gamma$
is not a multiple.
It is then a (general position) closed geodesic
with excess self-intersection points in
its free homotopy class,
and $\gamma$ must thus contain a singular
$1$-gon or $2$-gon
according to
\cite[Theorem 4.2]{hassIntersectionsCurvesSurfaces1985}.
Since a singular $1$-gon would give a
homotopically trivial
timelike closed curve
forbidden by
\cite[Corollary A.6]{mion-moutonRigiditySingularDeSitter2024},
$\gamma$ eventually contains a singular $2$-gon.
As in the proof of Lemma \ref{lemma-simpletimelikegeodesics},
this situation leads to two distinct timelike geodesics
in $\dStilde$ having two intersection points,
which is impossible.
This contradiction concludes the proof of the
theorem.
\end{proof}

\section{Length-twist coordinates
on the deformation space of singular de-Sitter tori}
\label{section-FenchenNielsencoordinates}
In this section,
we define coordinates in the \emph{deformation space} $\Deftheta^A$
which are analogous to the classical Fenchel-Nielsen coordinates
in the Teichmüller space.
\par We denote by
$\Deftheta$
(respectively $\Deftheta^A$)
the space
of
(resp. class A)
singular de-Sitter
structures on $\Tn{2}$ of fixed area $\theta>0$
and with a unique singular point
at $\zero\in\Tn{2}$,
quotiented by the group
$\Homeo^0(\Tn{2},\zero)$
of homeomorphisms of $\Tn{2}$ isotopic to the
identity relative to $\zero$
(see \cite[\S 6.1]{mion-moutonRigiditySingularDeSitter2024}
for more details on $\Deftheta$ and its topology).
We will denote by $\m$ a singular de-Sitter structure
and by $\mzero\in\Deftheta$ its isotopy class.
The quotient $\PMod(\Tn{2},\zero)$
of homeomorphisms of $\Tn{2}$ fixing $\zero$
by $\Homeo^0(\Tn{2},\zero)$ is called the
\emph{modular group of $(\Tn{2},\zero)$},
and naturally acts on $\Deftheta$.
\par We will denote by
$L(\gamma)$
the length of any causal curve $\gamma$
in a $\dS$-surface
(the context preventing
any confusion regarding the surface
under consideration).

\subsection{de-Sitter tori as identification
space of
rectangles with two timelike edges}
\label{subsection-identificationrectangle}
As in the hyperbolic case, the first step
is to uniquely characterize a polygon
in the de-Sitter space
by the length of its sides.
\begin{lemma}
 \label{lemma-uniqueretanglelengthboundary}
For any $(k,l)\in(\R_+^*)^2$, there exists
up to action of $\Bisozero{1}{2}$
a unique rectangle
$\mathcal{R}_{\theta,k,l}$ in $\dS$
satisfying the following conditions.
\begin{itemize}
 \item The oriented boundary
 of $\mathcal{R}_{\theta,k,l}$
 is formed of four geodesic segments which are consecutively
 future $\alpha$ lightlike,
 future timelike,
 past $\alpha$ lightlike,
 and past timelike.
 \item The left (respectively right)
 timelike edge has length $k$
 (resp. $l$).
\end{itemize}
\end{lemma}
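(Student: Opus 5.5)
The statement as written does not constrain the area, but the notation $\mathcal{R}_{\theta,k,l}$ and a parameter count show that the area must be fixed. I will therefore prove the lemma with the additional requirement that $\mathcal{R}_{\theta,k,l}$ has area $\theta$, which I take to be the intended meaning. The plan is to translate the rectangle into configurations in $\Hn{2}$ via the identification \eqref{equation-identificationespacegeodesiquesH2}, count parameters, and then solve for the single free parameter using the area.

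\emph{Dictionary with $\Hn{2}$.} An $\alpha$ lightlike geodesic is a fiber of $\pi_\alpha$. So the two $\alpha$ edges consist of oriented hyperbolic geodesics issuing from fixed ideal points $\xi$ (bottom edge) and $\eta$ (top edge). A timelike geodesic of $\dS$ is the trace of a timelike plane $L$. Since $L^\perp$ is spacelike, it is a point of $\dS$, i.e. an oriented geodesic $g$ of $\Hn{2}$, and the points $p\in L$ are exactly the oriented geodesics $p^\perp$ orthogonal to $g$. Parametrising by the foot point on $g$ makes the Lorentzian length along the timelike geodesic equal to hyperbolic distance along $g$. I would check this normalisation of $\mudS$ on one explicit example.

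Consequently a rectangle of the required type amounts to the following data:
\begin{itemize}
\item two ideal points $\xi\neq\eta$;
\item two geodesics $g_1$ (left edge) and $g_2$ (right edge).
\end{itemize}
The four vertices are the perpendiculars dropped from $\xi$ and from $\eta$ onto $g_1$ and onto $g_2$. The timelike edge lengths are the distances $k$ and $l$ between the feet of the perpendiculars from $\xi$ and $\eta$ on $g_1$ and on $g_2$ respectively. The prescribed cyclic order (future $\alpha$, future timelike, past $\alpha$, past timelike) imposes open conditions: the relative position of $\xi,\eta$ and the endpoints of $g_1,g_2$ on $\partial\Hn{2}$, and the signs of the displacements.

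\emph{Normalisation and count.} Using $\Bisozero{1}{2}\simeq\PSL{2}$ acting on $\dSancien=(\RP{1})^{(2)}$, I put $\xi=\infty$, $\eta=0$, and the foot data on $g_1$ in a standard position. The remaining stabiliser is the diagonal group; it is used up once the left edge is normalised to length $k$. Then $g_2$ has two free parameters, and the condition that the right edge has length $l$ cuts this down to a one-parameter family $(g_2(t))_{t}$. Concretely, the feet of the perpendiculars from $0$ and $\infty$ onto a geodesic with endpoints $u<v$ are explicit, and their distance is $\tfrac12\log$ of a cross-ratio of $(0,\infty,u,v)$. So "length $l$" is a level set of a cross-ratio, which I parametrise explicitly, for instance by the position of $g_2$ along the fixed direction.

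\emph{Area.} The last step, which I expect to be the main obstacle, is to show that the area $A(t)$ of the rectangle is a strictly monotone continuous function of $t$ whose image is exactly $\R_+^*$. Existence and uniqueness of $t$ with $A(t)=\theta$, hence of $\mathcal{R}_{\theta,k,l}$, follows from this. Proposition \ref{proposition-GaussBonnetBirmanNomizu} cannot be applied directly because two edges are lightlike. I would first try to bypass this by computing the area in the coordinates $(x,y)\in(\RP{1})^{(2)}$: there the $\mathrm{PSL}_2$-invariant area form is $\frac{dx\,dy}{(x-y)^2}$ (up to the normalising constant), and the $\alpha$ edges are coordinate segments, so the integral is elementary and yields a closed formula in $t$. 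Monotonicity and the limits $A\to0$ and $A\to+\infty$ at the two ends of the parameter interval should then be read off directly. As a fallback, I would cut off the lightlike corners by short timelike segments, apply Birman--Nomizu, and take the limit.
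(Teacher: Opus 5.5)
Your reading of the statement is right: the area has to be fixed to $\theta$, as the caption of Figure~\ref{figure-rectanglethetal} confirms. The dictionary with $\Hn{2}$ and the parameter count are also correct, and the uniqueness half of your plan goes through.

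\textbf{The step that fails.} You claim the area has image exactly $\R_+^*$, i.e.\ that it tends to $0$ at one end of the parameter interval. This is false as soon as $k\neq l$. You can see it by carrying out your own computation.

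\emph{Setup.} In the coordinates $(a,b)=(p_\alpha^\infty,p_\beta^\infty)$, the invariant metric normalised as in your dictionary is $4\,da\,db/(a-b)^2$, with area form $2\,da\,db/(a-b)^2$. Put $\xi=\infty$ and $\eta=0$. Both timelike edges run in the future direction from the $\xi$-leaf to the $\eta$-leaf, so $a$ sweeps the same half-line along both, and the endpoints of $g_1,g_2$ lie in the other half-line. Up to $z\mapsto -z$, which is harmless for lengths and areas, take $a\in(-\infty,0)$ and $0<u_i<v_i$.

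\emph{Area and lengths.} The edge orthogonal to $g_i$ is the graph of the harmonic involution $\sigma_i$ fixing $u_i,v_i$. One has $2/(a-\sigma_i(a))=1/(a-u_i)+1/(a-v_i)$, so integrating first in $b$ and then in $a$ gives
\[
\mathcal{A}=\abs{\log\frac{u_2v_2}{u_1v_1}},\qquad k=\log\frac{v_1}{u_1},\qquad l=\log\frac{v_2}{u_2}.
\]
Normalise $u_1v_1=1$ and write $(u_2,v_2)=(se^{-l/2},se^{l/2})$. The area is then $2\abs{\log s}$, which proves uniqueness at once.

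\emph{The missing constraint.} The vertices are $x=(\infty,\cosh\frac{k}{2})$, $y=(\infty,s\cosh\frac{l}{2})$, $x'=(0,1/\cosh\frac{k}{2})$ and $y'=(0,s/\cosh\frac{l}{2})$. The two timelike edges are disjoint only if $b_y-b_x$ and $b_{y'}-b_{x'}$ have the same sign. With $c=\cosh(k/2)/\cosh(l/2)$, this means $s>\max(c,c^{-1})$, or $s<\min(c,c^{-1})$ for the opposite labelling of the edges.

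This non-crossing condition is the open condition missing from your list, and it is what bounds the parameter interval. At its end two vertices merge, and the rectangle degenerates not into a segment but into a triangle with timelike sides $k$, $l$ and one $\alpha$ side, of area $2\abs{\log c}>0$. Hence the areas realised are exactly those in $(2\abs{\log c},+\infty)$. As a flat check, Minkowski quadrilaterals with two parallel null sides and timelike sides $k,l$ have area $>\abs{k^2-l^2}/4$ by the same computation, which is the leading term of $2\abs{\log c}$.

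\textbf{Consequence for the statement.} No argument can give existence for every $(k,l)$. For $k\neq l$ and $\theta\leq2\abs{\log c}$ no such rectangle exists, so the existence part of the statement is false as written.

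\textbf{What the paper actually does.} Its proof establishes only uniqueness, synthetically:
\begin{itemize}
\item it normalises the left edge $J$;
\item it rules out disjoint right edges, since one rectangle would then strictly contain the other;
\item for crossing right edges, it uses the time-reversing involution fixing the crossing point $z$ and the equal areas of the two small triangles to show that $z$ bisects both right edges, whence $y_1=y_2$.
\end{itemize}
That is all that is used later. For $k\neq l$ (Lemma~\ref{lemma-decoupagetoredeSitter}) the rectangle is produced by cutting an actual annulus. Existence is invoked only for $k=l$ (proof of Theorem~\ref{theoremeintro-fenchennielsen}), where $c=1$, $s$ ranges over $(1,+\infty)$, and your argument does yield every $\theta>0$.

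Your computational route is therefore a valid alternative for uniqueness, and unlike the paper's argument it determines the exact range of areas. The existence claim, however, must be restricted to $\theta>2\abs{\log c}$.
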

\begin{proof}
Let $\mathcal{R}_1$ and $\mathcal{R}_2$ be two such rectangles.
We name the vertices and edges of our rectangles
as indicated on the reference rectangle
$\mathcal{R}_{\theta,k,l}$ on the left-hand side of
Figure \ref{figure-rectanglethetal}.
Since $\Bisozero{1}{2}$ acts transitively on $\dS$,
we can assume without loss of generality that
$\mathcal{R}_1$ and $\mathcal{R}_2$ have the same
the bottom left vertex $x\in\dS$.
Since $\Stab_{\Bisozero{1}{2}}(x)$
moreover acts simply transitively on timelike directions
at $x$, we can also assume without loss of generality that
the left timelike eddge $J_1$ and $J_2$ of
$\mathcal{R}_1$ and $\mathcal{R}_2$
have the same direction at $x$.
Since those edges have by assumption
the same length $k$, we eventually
have $J_1=J_2=J$.
If the open right timelike edges $\Int(I_1)$
and $\Int(I_2)$
were disjoint, then
$\mathcal{R}_1$ and $\mathcal{R}_2$ would not have
the same area, contradicting our assumption.
Therefore, $\Int(I_1)$ and $\Int(I_2)$
intersect at a point $z$ indicated
on the right-hand side of
Figure \ref{figure-rectanglethetal}.
The $\beta$-leaf of $z$
intersects the $\alpha$-leaf of $y'_1$
(respectively of $y_1$)
at a point $z_+$ (resp. $z_-$).
The point $z$ defines together with
$y'_1$ and $y'_2$ (resp. with
$y_1$ and $y_2$) a triangle $T_2$
(resp. $T_1$).
The rectangles $\mathcal{R}_1$ and $\mathcal{R}_2$
are the respective union
(disjoint outside of edges)
of a common subrectangle
$\mathcal{R}_0$ and of the triangles
$T_1$ and $T_2$.
Since $\mathcal{R}_1$ and $\mathcal{R}_2$ have
the same area,
$T_1$ and $T_2$ have thus the same area.
There exists a unique orientation-preserving isometry
$\varphi$ of $\dS$
which reverses the time-orientation,
fixes $z$ and preserves
each non-oriented geodesic through $z$.
Since the angles
$\theta$ between the timelike edges
of $T_1$ and $\varphi(T_2)$
are equal
and their only non-timelike edge
is $\alpha$ lightlike,
one of the two triangles
$T_1$ and $\varphi(T_2)$
is contained in the other.
These triangles having the same
area, this shows that
$T_1=\varphi(T_2)$,
\emph{i.e.} that the
right-hand side of
Figure \ref{figure-rectanglethetal}
has a central symmetry with respect to $z$.
Since $I_1$ and $I_2$ have the same length
by assumption, we have thus
$L(\intervalleff{z}{y_1}_{I_1})
=\frac{L(I_1)}{2}
=\frac{L(I_2)}{2}
=L(\intervalleff{z}{y_2}_{I_2})$
(with $\intervalleff{p}{q}_I$
the segment from $p$ to $q$
on an oriented geodesic $I$),
and therefore
$y_1=y_2$.
Let indeed denote by $a^\theta$ the unique isometry
fixing $z$ and sending
the direction $\intervalleff{z}{y_2}_{I_2}$ on
the direction $\intervalleff{z}{y_1}_{I_1}$.
Then $a^\theta(y_2)=y_1$
since $L(\intervalleff{z}{y_1}_{I_1})
=L(\intervalleff{z}{y_2}_{I_2})$,
hence $a^\theta$ fixes $z_-\in\Fbeta(z)\setminus\{z\}$.
Since $\Stab_{\Bisozero{1}{2}}(z)$ acts simply
transitively on
$\Fbeta(z)\setminus\{z\}$,
this shows that $a^\theta=\id$
hence that $y_1=y_2$.
Therefore $y'_1=y'_2$ as well,
which shows that
$\mathcal{R}_1=\mathcal{R}_2$
and concludes the proof.
\end{proof}
\begin{figure}[!h]
	\begin{center}
		\def\svgwidth{1.16 \columnwidth}
			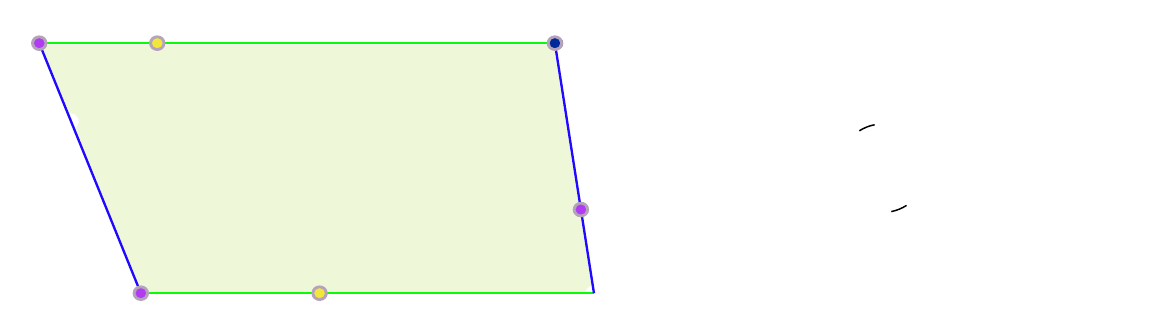
	\end{center}
	\caption{Rectangle
	$\mathcal{R}_{\theta,k,l}$ of $\dS$
	with area $\theta$,
	$\alpha$ lightlike horizontal edges
	and timelike vertical edges of lengths $(k,l)$.}
			\label{figure-rectanglethetal}
\end{figure}

We will denote
$\mathcal{R}_{\theta,l}\coloneqq\mathcal{R}_{\theta,l,l}$
when both timelike sides of the rectangle
are equal.
\begin{remark}
 \label{remark-identificationspace}
The choice of points $(y'',x'',o,o')$
on the boundary of $\mathcal{R}_{\theta,l}$
defines
identifications of the resulting
eight edges of $\partial\mathcal{R}_{\theta,l}$
by isometries
$(A_1,A_2,B_1,B_2)$
of $\Bisozero{1}{2}$,
as indicated in Figure
\ref{figure-rectanglethetal}.
The quotient of $\mathcal{R}_{\theta,l}$
by such identifications
is a $\dS$-torus
of area $\theta$ and
with at most three singular points
$(y,x,o)$,
henceforth
called an \emph{identification space}
of $\mathcal{R}_{\theta,l}$.
We can analogously construct
a de-Sitter annulus with
(piecewise) geodesic timelike boundary
as an \emph{identification space
of the lightlike edges of
$\mathcal{R}_{\theta,k,l}$}.
\end{remark}
The de-Sitter annuli
with timelike geodesic boundary
and containing a single singularity
are basic building blocks
for singular de-Sitter tori.
\begin{lemma}
 \label{lemma-decoupagetoredeSitter}
Let $\gamma_1$ and $\gamma_2$
be two
simple closed timelike
geodesics of a class A singular de-Sitter torus $T$.
\begin{enumerate}
 \item If $\gamma_1=\gamma_2$
 and $T$ contains a unique singularity,
 then $T$ is isometric to an identification space
 of the rectangle $\mathcal{R}_{\theta,l}$,
 with $l$ the length
 of $\gamma_1=\gamma_2$ and $\theta$ the area of $T$.
 \item Assume that $\gamma_1$ and $\gamma_2$
are freely homotopic,
disjoint and
bound an open annulus $A$
of area $\theta$
containing a single singularity.
Then the singular de-Sitter sub-annulus
$\Cl(A)\subset T$
with timelike geodesic boundary
is isometric to an identification space
of the lightlike edges of
$\mathcal{R}_{\theta,l_1,l_2}$,
with $l_i$ the length
 of $\gamma_i$.
\end{enumerate}
\end{lemma}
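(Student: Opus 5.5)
We first prove (2) and then deduce (1) by cutting $T$ along $\gamma_1$.

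\emph{Proof of (2).} Let $p$ be the singularity in $A$. Through $p$ passes a leaf of $\Falpha$, and by transversality we want a lightlike $\alpha$ segment $\sigma$ from $\gamma_1$ to $\gamma_2$ through $p$.

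To build $\sigma$, we follow the $\alpha$-leaves forward and backward from $p$ inside $\Cl(A)$. A leaf cannot stay forever in the compact annulus without hitting the boundary. Indeed, a leaf trapped in $A$ would accumulate and produce a closed leaf or a Reeb-like recurrence in $A$, whose homology class lies in $\Z[\gamma_1]$. This contradicts the class A assumption: $[\gamma_1]$ is timelike, hence not on the lightlike asymptotic cycle $A(\Falpha)$.

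For the same reason, since $\gamma_i$ is timelike and hence transverse to $\Falpha$, the $\alpha$ foliation restricted to $\Cl(A)$ is a product foliation by segments crossing from $\gamma_1$ to $\gamma_2$. The flow along leaves from $\gamma_1$ to $\gamma_2$ gives a homeomorphism. The singular leaf through $p$ may split into two prongs at $p$, since the angle is $\theta$: one prong is the future and one the past $\alpha$ half-leaf of $p$, and each reaches a boundary component. Cutting $\Cl(A)$ along the singular leaf (both sides of the cut) yields a simply connected disk $R$ containing no singularity in its interior. Its boundary consists of a segment of $\gamma_1$, a segment of $\gamma_2$, and two copies of the cut leaf.

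The developing map is injective on $R$, by simple connectivity and the local injectivity plus convexity properties of $\alpha$-foliated product disks. Its image is a quadrilateral in $\dS$ with edges consecutively future $\alpha$ lightlike, timelike, past $\alpha$ lightlike, timelike. The timelike edges have lengths $l_1$ and $l_2$, each being the full $\gamma_i$ cut at one point. By Gau{\ss}-Bonnet (Proposition \ref{proposition-GaussBonnet}) the area is $\theta$. Lemma \ref{lemma-uniqueretanglelengthboundary} identifies it with $\mathcal{R}_{\theta,l_1,l_2}$, and the gluing of the two lightlike copies is the identification space.

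\emph{Main obstacle.} The delicate point is showing that the cut leaf through $p$ meets each $\gamma_i$ exactly once, as a single lightlike segment with the singular point on it, so that the developed image really is a four-edged rectangle. Two uses of Gau{\ss}-Bonnet in the strip (Proposition \ref{proposition-GaussBonnetBirmanNomizu}) should rule out an $\alpha$ segment hitting the same $\gamma_i$ twice: the resulting bigon with one lightlike and one timelike edge would have an impossible area or angle sum.

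\emph{Proof of (1).} Cut $T$ along $\gamma_1$ to get an annulus with timelike geodesic boundary $\gamma_1^\pm$, both of length $l$, of area $\theta$, containing the unique singularity. Apply (2) to this cut annulus; the argument above only uses the annulus, not its embedding in $T$. This gives $\mathcal{R}_{\theta,l,l}=\mathcal{R}_{\theta,l}$ with its lightlike edges identified.

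Regluing the two timelike edges by the isometry of $\gamma_1^+$ onto $\gamma_1^-$ is an element of $\Bisozero{1}{2}$ applied piecewise. The cut points on each timelike side subdivide it, giving pairings $(A_1,A_2,B_1,B_2)$ of eight edges as in Remark \ref{remark-identificationspace}. Hence $T$ is an identification space of $\mathcal{R}_{\theta,l}$.
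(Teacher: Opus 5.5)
Your strategy matches the paper's. You cut $\Cl(A)$ along the $\alpha$-leaf of the singular point $p$, recognise the resulting disk as $\mathcal{R}_{\theta,l_1,l_2}$, and obtain (1) by the same argument after cutting $T$ along $\gamma_1$. The paper delegates the identification of the cut disk to the rectangular case of \cite[Proposition 9.2]{mion-moutonRigiditySingularDeSitter2024}, while you sketch a developing-map argument followed by Lemma \ref{lemma-uniqueretanglelengthboundary}.

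\textbf{The gap.} The step that fails as written is your resolution of the ``main obstacle''. Proposition \ref{proposition-GaussBonnetBirmanNomizu} only treats polygons with timelike geodesic edges, and there is no Lorentzian angle between a timelike and a lightlike direction. A bigon with a lightlike edge is therefore outside its scope. What settles this step is transverse orientation:
\begin{enumerate}
\item With $X_\alpha,X_\beta$ the future lightlike directions, any future timelike $T$ equals $aX_\alpha+bX_\beta$ with $a,b>0$. Hence $(T,X_\alpha)$ always has the same orientation, so future $\alpha$-leaves cross a future timelike curve always from the same side.
\item Since $\gamma_1,\gamma_2$ are freely homotopic as future-oriented curves, $A$ lies on the left of one and on the right of the other. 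So a future $\alpha$ half-leaf entering $\Cl(A)$ through one boundary component can only leave through the other.
\item Your non-trapping argument supplies the rest: a closed $\alpha$-leaf freely homotopic to $\gamma_1$ would force the asymptotic cycle of $\Falpha$ to be $\R[\gamma_1]$, contradicting that $[\gamma_1]$ is timelike.
\end{enumerate}
Together these give the product structure of $\Falpha$ on $\Cl(A)$. In particular the leaf of $p$ is an arc meeting each $\gamma_i$ exactly once.

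\textbf{Two points to make explicit.} First, there are no ``prongs'', and the two copies of $p$ on $\partial R$ are not corners. The sector $D_\theta$ removed in $\dSsingularthetatilde$ lies inside the future timelike cone, so the $\alpha$-leaf of $p$ is a straight lightlike line in a singular chart. The holonomy at $p$, conjugate to $a^\theta$, preserves the lightlike lines through its fixed point. So each side of the cut develops onto a half-plane of $\dS$ with straight lightlike boundary, and $\partial R$ really has four geodesic edges.

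Second, injectivity of the developing map on $R$ follows cleanly from $\pi_\alpha$. A timelike geodesic of $\dS$ meets each $\alpha$-leaf at most once. The $\alpha$-leaves of $R$ all cross the edge coming from $\gamma_1$, so distinct leaves develop into distinct fibres of $\pi_\alpha$. The developing map is also injective along each such leaf, which gives injectivity on $R$.
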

\begin{proof}
We consider the case (2), the proof of (1) being analogous.
Let $p$ be the unique singularity of $A$.
Since $T$ is class A, the
$\alpha$ leaf of $p$
describes an arc $\delta$ in $A$ joining $\gamma_1$ to $\gamma_2$.
Let $R$ be the de-Sitter surface with geodesic
boundary obtained by cuting $A$ along
$\delta$.
Since the interior and the
timelike boundaries of $A$
do not contain any singularity,
the same proof \emph{mutatis mutandis} than
the rectangular case in
\cite[Proposition 9.2]{mion-moutonRigiditySingularDeSitter2024}
shows that $R$ is isometric
to the rectangle $\mathcal{R}_{\theta,l_1,l_2}\subset\dS$,
and that $A$ is an identification space
of $\mathcal{R}_{\theta,l_1,l_2}$.
\end{proof}

\subsection{Length-twist
coordinates in $\Deftheta$}
\label{subsection-lengthtwist}
Let $(\azero,\bzero)\in(\piun{\Tn{2}}_*)^2$
be a fixed pair of primitive homotopy classes
of algebraic intersection number
$\hat{i}(\azero,\bzero)=1$.
We can now conclude the
construction of coordinates in the
open subset
\begin{equation*}
 \label{equation-Defthetaa}
 \Deftheta^A_{\azero}\coloneqq
 \enstq{\mzero\in\Deftheta}{
 \azero\in\mathcal{C}^\mzero}
\end{equation*}
of class A structures
of which $\azero$ is a timelike class.
Proposition \ref{theoremintro-existenceuniquenessgeodesics}
already gave us a
\emph{length function of $\azero$}
on $\Deftheta^A_{\azero}$, defined as
\begin{equation*}
 \label{equation-lengthfunctionintro}
 \mathcal{L}_\azero\colon\mzero\in\Deftheta^A_{\azero}
 \mapsto
 L^\m(\gamma_\azero^\m)\in\R_+^*
\end{equation*}
with $\gamma_\azero^\m$ the unique
simple closed timelike geodesic of $\m$
in the free homotopy class $\azero$.
Since the lightlike foliations
of a class A structure are suspensions,
the $\alpha$-lightlike leaf of the
singularity $\zero$ has a first intersection
with the
transverse closed curve
$\gamma_\azero^\m$
in the future (respectively in the past)
at a point denoted by $x_+$
(resp. $x_-$).
We refer to
\cite[Proof of Lemma 9.9]{mion-moutonRigiditySingularDeSitter2024}
for more details on this claim.
The concatenation of the $\alpha$-lightlike
segment $\intervalleff{x_+}{x_-}_\alpha$
and of the future segment
$\intervalleff{x_-}{x_+}_{\gamma_\azero^\m}$
on the oriented geodesic $\gamma_\azero^\m$ is a closed curve,
which can be written as
\begin{equation*}
 \label{equation-nab}
 \intervalleff{x_+}{x_-}_\alpha\cdot
 \intervalleff{x_-}{x_+}_{\gamma_\azero^\m}
 =\bzero+n_{\azero,\bzero}(\mzero)\cdot\azero
\end{equation*}
for a unique $n_{\azero,\bzero}(\mzero)\in\Z$
(depending only on the isotopy
class $\mzero$).
We adopt the convention that the segment
$\intervalleoo{x_-}{x_+}_{\gamma_\azero^\m}$
is empty if $x_-=x_+$
(\emph{i.e.} if the $\alpha$-lightlike leaf of $\zero$ is closed).
\begin{definition}
 \label{definition-Thetaab}
 The \emph{twist coordinate around $\azero$
 with longitude $\bzero$} of
 $\mzero\in\Deftheta^A_\azero$ is
 the real number
 \begin{equation*}
  \label{equation-Thetaab}
  \Theta_{\azero,\bzero}(\mzero)
 \coloneqq
 n_{\azero,\bzero}(\mzero)+
 \frac{L^\m\left(\intervalleoo{x_-}{x_+}_{\gamma_\azero^\m}\right)}{\mathcal{L}_\azero(\mzero)}
 \in\R.
 \end{equation*}
\end{definition}

We can now conclude the proof
of Theorem
\ref{theoremeintro-fenchennielsen}.
\begin{proof}[Proof
of Theorem
\ref{theoremeintro-fenchennielsen}]
We recall from Lemma \ref{lemma-uniqueretanglelengthboundary}
that the rectangle $\mathcal{R}_{\theta,l}$
is unique up to the action of $\Bisozero{1}{2}$,
and we henceforth fix once and for all such a rectangle.
The points $(x,y,x',y')$ are thus fixed.
Since $\Bisozero{1}{2}$ acts simply transitively
on future timelike segments of the same
finite length of $\dS$,
and also on proper
future $\alpha$ lightlike segments
of $\dS$,
the choice of points $(y'',x'',o,o')$
on $\partial\mathcal{R}_{\theta,l}$
entirely determines the identifications
$(A_1,A_2,B_1,B_2)$
of the Figure \ref{figure-rectanglethetal}.
The timelike edges
$\intervalleff{x}{y''}_J$ and $\intervalleff{x''}{y'}_I$
having moreover the same length,
$x''$ is actually determined by $y''$.
In the end, the three points $(y'',o,o')$
on $\partial\mathcal{R}_{\theta,l}$ entirely determine
a $\dS$-torus
$\mathcal{T}_{\theta,l}(y'',o,o')$
with at most three singularities at the respective projections
$x$, $y$ and $o$ of the marked points
on the boundary (named by the same letters
on Figure \ref{figure-rectanglethetal}).
The holonomy of a small
positively oriented closed curve
$\gamma_{x/y/o}$ around one of
those points is respectively equal to:
\begin{equation*}
 \label{equation-holonomyxyo}
 \hol(\gamma_x)=A_1^{1}B_2^{-1}B_1,
 \hol(\gamma_y)=B_2B_1^{-1}A_2
 \text{~and~}
 \hol(\gamma_o)=A_2^{-1}A_1.
\end{equation*}
We refer to \cite[Proposition 4.1]{mion-moutonRigiditySingularDeSitter2024}
for more details concerning the computation
of these holonomies.
Therefore, $x$ and $y$ are regular points
of $\mathcal{T}_{\theta,l}(y'',o,o')$
if and only if
\begin{equation}
 \label{equation-A1A2}
 A_1=B_2^{-1}B_1
 \text{~and~}
 A_2=B_1B_2^{-1}
\end{equation}
according to Lemma \ref{lemma-dSstructuresholonomy}.
In other words,
under the condition of having a unique singular
point at $o\in\mathcal{T}_{\theta,l}(y'',o,o')$,
$y''\in\partial\mathcal{R}_{\theta,l}$
entirely determines $(B_1,B_2)$ hence
$(A_1,A_2)$ according to \eqref{equation-A1A2},
which in return determines $(o,o')$.
\par Let $\gamma$ denote the timelike simple geodesic loop
of $\mathcal{T}_{\theta,l}(y'',o,o')$ defined
by the projection of a timelike edge
($I$ or $J$)
of $\mathcal{R}_{\theta,l}$.
We have shown so far that
$y''\in\partial\mathcal{R}_{\theta,l}$
determines a unique
$\dS$-torus
$\mathcal{T}_{\theta,l,y''}$ of area $\theta$
for which $\gamma$ has no singularity,
\emph{i.e.} is a simple closed geodesic.
Moreover, $\gamma$ has length $l$
and $\mathcal{T}_{\theta,l,y''}$
has a unique singularity at $o$.
Let $a$ denote the free homotopy class of $\gamma$,
and $b$ denote the free homotopy class of the simple closed
curve
of $\mathcal{T}_{\theta,l,y''}$
obtained by projecting a simple arc of
$\mathcal{R}_{\theta,l}$
intersecting its boundary only at $y$ and $y''$
and oriented from $I$ to $J$.
There exists, up to pre-composition
by a homeomorphism of $\Tn{2}$ isotopic to the identity
relative to $\zero$,
a unique homeomorphism
$\Phi\colon\Tn{2}\to\mathcal{T}_{\theta,l,y''}$
sending $\zero$ on $o$ and
the homotopy basis $(\azero,\bzero)$
on $(a,b)$.
This well-known fact is due to a result of Epstein
\cite{epsteinCurves2manifoldsIsotopies1966}
(see also \cite[Proposition 1.6, Theorem 2]{beguinFixedPointSets2020}
for more details).
We henceforth denote by
$\mzero_{\theta,l,y''}\in\Deftheta^A$
the pull-back of the $\dS$-structure of $\mathcal{T}_{\theta,l,y''}$
by $\Phi$
on $\Tn{2}$.
Let $D_\azero\in\PMod(\Tn{2},\zero)$
denote the \emph{positive Dehn twist}
around $\azero$, namely the unique mapping class
satisfying $D_\azero(\azero)=\azero$ and
$D_\azero(\bzero)=\bzero+\azero$.
Then for any $n\in\Z$,
the length-twist coordinates of
$(D_\azero^n)_*\mzero_{\theta,l,y''}\in\Deftheta^A$
are given by
\begin{equation}
 \label{equation-lengthtwistg}
 \mathcal{L}_\azero((D_\azero^n)_*\mzero_{\theta,l,y''})=l
 \text{~and~}
 \Theta_{\azero,\bzero}((D_\azero^n)_*\mzero_{\theta,l,y''})=
 n+\frac{L\left(\intervalleff{x}{y''}_J\right)}{l},
\end{equation}
by the very definition of $\Phi$.
This shows the surjectivity of the length-twist map
\begin{equation*}
 \label{equation-lengthtwistproof}
 \mathcal{L}_\azero\times\Theta_{\azero,\bzero}
\colon\Deftheta^A_\azero\to\R^*_+\times\R.
\end{equation*}
\par Let now $\mzero_1$ and $\mzero_2$
be two points of $\Deftheta^A_\azero$ having the same length-twist
coordinates $\mathcal{L}_\azero(\mzero_i)=l$
and $\Theta_{\azero,\bzero}(\mzero_i)=n+u$,
with $n\in\Z$ and $u\in\intervalleff{0}{1}$.
Lemma \ref{lemma-decoupagetoredeSitter} shows that
$(D_\azero^{-n})_*\mzero_1$ and $(D_\azero^{-n})_*\mzero_2$
are respectively isometric to
structures $\mzero_{\theta,l,y_1}$ and $\mzero_{\theta,l,y_2}$,
and \eqref{equation-lengthtwistg} shows that
$y_1=y_2$ since
$\Theta_{\azero,\bzero}(\mzero_1)=\Theta_{\azero,\bzero}(\mzero_2)$.
This shows that $\mzero_1=\mzero_2$, hence the injectivity
of $\mathcal{L}_\azero\times\Theta_{\azero,\bzero}$.
This map being clearly continuous,
it remains to show that its inverse is continuous as well.
\par We fix the bottom-left vertex $x$
of $\mathcal{R}_{\theta,l}$
as well as the future timelike half-geodesic
$d\subset\dS$ containing $J$.
Let $p_r$ denote the unique point of $d$ at distance
$r$ from $x$.
For any $l\in\R^*_+$, there exists
a unique rectangle $\mathcal{R}_{\theta,l}$
of area $\theta$ having
$\intervalleff{x}{p_l}_d$ as left timelike edge $J$.
The map
\begin{equation*}
 \label{equation-inverseLTheta}
(l,u)\in\R^*_+\times\intervallefo{0}{1}
\mapsto\mzero_{\theta,l,p_{ul}}\in\Deftheta^A_\azero
\end{equation*}
is continuous and is a local inverse of
$\mathcal{L}_\azero\times\Theta_{\azero,\bzero}$,
which concludes the proof.
\end{proof}

Theorem \ref{theoremeintro-fenchennielsen}
gives a new proof of the following result,
proved in \cite[Theorem E]{mion-moutonRigiditySingularDeSitter2024}
by using ``lightlike coordinates''.
\begin{corollary}
 \label{corollary-DefthetaAhausdorfftopologicalsurface}
 $\Deftheta^A$ is a Hausdorff topological surface.
\end{corollary}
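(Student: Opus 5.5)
The plan is to deduce the corollary from Theorem \ref{theoremeintro-fenchennielsen}, by showing that the open sets $\Deftheta^A_{\azero}$ form an open cover of $\Deftheta^A$ by charts homeomorphic to $\R_+^*\times\R$.

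\emph{Step 1: the sets $\Deftheta^A_{\azero}$ are open.} For $\mzero\in\Deftheta^A$ the two lightlike asymptotic cycles are distinct, so the open cone $\mathcal{C}^\mzero$ is a non-empty open cone in $\Homologie{1}{\R}{\Tn{2}}\simeq\R^2$. It therefore contains a primitive integral class $\azero$, since rational directions are dense. Hence the sets $\Deftheta^A_{\azero}$, for $\azero$ primitive, cover $\Deftheta^A$.

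I would also check that each $\Deftheta^A_{\azero}$ is open in $\Deftheta^A$. This amounts to the continuity of the oriented projective asymptotic cycles $A^+(\mathcal{F}^\mzero_{\alpha/\beta})$ on the class A locus, which I would quote from \cite[\S 6]{mion-moutonRigiditySingularDeSitter2024}. The underlying reason is the continuity of rotation numbers of the first-return circle homeomorphisms, which depend continuously on the structure. The paper itself already calls $\Deftheta^A_{\azero}$ an open subset, so I take this as given.

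\emph{Step 2: charts.} For each primitive $\azero$, complete it to a basis $(\azero,\bzero)$ with $\hat{i}(\azero,\bzero)=1$. Theorem \ref{theoremeintro-fenchennielsen} then gives a homeomorphism $\Deftheta^A_{\azero}\to\R_+^*\times\R$. So $\Deftheta^A$ is locally Euclidean of dimension two, and second countability is inherited from the countability of the cover together with the topology of $\Deftheta$.

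\emph{Step 3: Hausdorff, the main obstacle.} Each chart is Hausdorff, so the issue is separating $\mzero_1\neq\mzero_2$ that lie in no common chart. If some primitive $\azero$ lies in $\mathcal{C}^{\mzero_1}\cap\mathcal{C}^{\mzero_2}$, the chart $\Deftheta^A_{\azero}$ separates them. Otherwise the two timelike cones have disjoint interiors, or meet only in an open cone containing no integral point, which is impossible for a non-empty open cone.

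In the disjoint case, I would pick open cones $U_1\supset\overline{\mathcal{C}^{\mzero_1}}$ and $U_2\supset\overline{\mathcal{C}^{\mzero_2}}$ in $\mathbf{P}^+$-coordinates that are slightly larger but still have disjoint interiors, when possible. By continuity of the asymptotic cycles, the sets $\{\mzero : \mathcal{C}^\mzero\subset U_i\}$ are open neighbourhoods, and they are disjoint: a structure in both would have its non-empty cone inside $U_1\cap U_2=\emptyset$.

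The delicate subcase is when the closures of the two cones share a boundary ray, with the two lightlike cycles touching. There I would instead pick a primitive class on the far side of each cone and use its timelikeness as the separating open condition. Alternatively, I would fall back on the continuity of $\mathcal{L}_\azero$ across two overlapping charts.
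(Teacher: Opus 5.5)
Your skeleton is the paper's: the charts $\Deftheta^A_{\azero}$ given by Theorem \ref{theoremeintro-fenchennielsen}, Hausdorffness inside a chart, a common chart when $\mathcal{C}^{\mzero_1}\cap\mathcal{C}^{\mzero_2}\neq\varnothing$ (a non-empty open cone contains a primitive class), and continuity of the asymptotic cycles \cite[Lemma 6.5]{mion-moutonRigiditySingularDeSitter2024} otherwise. The gap is in this last case. You leave it open in exactly the subcase you call delicate.

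A condition of the form $\mathcal{C}^\mzero\subset U_i$ is open around $\mzero_i$ only if $U_i$ contains the closed cone (minus the origin). So when $\overline{\mathcal{C}^{\mzero_1}}$ and $\overline{\mathcal{C}^{\mzero_2}}$ share a boundary ray (adjacent cones, or $\mathcal{C}^{\mzero_2}$ equal to the spacelike cone of $\mzero_1$), $U_1\cap U_2$ contains a neighbourhood of that ray. Nothing then prevents a structure with a thin cone around the ray from lying in both sets.

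Neither fallback repairs this as stated. Timelikeness conditions alone are not mutually exclusive: a structure whose cone is wide enough to contain both chosen classes lies in both $\Deftheta^A_{\azero_1}$ and $\Deftheta^A_{\azero_2}$. Length functions cannot help either, since by hypothesis no primitive class is timelike for both structures, so no chart contains both points.

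The missing observation is that no case distinction on the position of the cones is needed. The cone $\mathcal{C}^\mzero$ is determined by the pair $\mathcal{A}(\mzero)=(A^+(\Falpha^\mzero),A^+(\Fbeta^\mzero))$, so disjoint non-empty cones force $\mathcal{A}(\mzero_1)\neq\mathcal{A}(\mzero_2)$. Since $\mathcal{A}$ is continuous with values in the Hausdorff space $(\mathbf{P}^+(\Homologie{1}{\R}{\Tn{2}}))^2$, preimages of disjoint neighbourhoods of these two values separate $\mzero_1$ and $\mzero_2$. This is the paper's argument.

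If you want to keep causal-type conditions instead, take primitive $\azero_i\in\mathcal{C}^{\mzero_i}$. The open cone $\mathcal{C}^{\mzero_2}$ misses $\overline{\mathcal{C}^{\mzero_1}}$, so $\azero_2$ is spacelike for $\mzero_1$, and symmetrically. The open sets ``$\azero_1$ timelike and $\azero_2$ spacelike'' and ``$\azero_2$ timelike and $\azero_1$ spacelike'' are then disjoint neighbourhoods. It is the spacelike half of the condition that your first fallback lacks.

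Your remark on second countability is a harmless addition that the paper does not make.
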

\begin{proof}
 Since $\Deftheta^A$ is covered by the
 $\Deftheta^A_\azero$ for $\azero\in\piun{\Tn{2}}_*$,
 Theorem \ref{theoremeintro-fenchennielsen}
 shows that the length-twist coordinates
 provide a topological atlas of $\Deftheta^A$.
 Each $\Deftheta^A_\azero$ is moreover Hausdorff
 as it is globally homeomorphic to $\R^*_+\times\R$.
 Let $\mzero_1\neq\mzero_2\in\Deftheta^A$.
 If
 $\mathcal{C}^{\mzero_1}\cap\mathcal{C}^{\mzero_2}\neq\varnothing$,
 $\mzero_1$ and $\mzero_2$
 belong to a common $\Deftheta^A_\azero$ in which they
 can thus be separated
 by open subsets.
 If
 $\mathcal{C}^{\mzero_1}\cap\mathcal{C}^{\mzero_2}=\varnothing$,
 then the pairs of oriented projective asymptotic cycles
 $\mathcal{A}(\mzero_1)\coloneqq
 (A^+(\Falpha^{\mzero_1}),A^+(\Fbeta^{\mzero_1}))$
 and $\mathcal{A}(\mzero_2)$
 are distinct.
 The asymptotic cycle map
 \begin{equation*}
  \label{equation-asymptoticcycle}
  \mathcal{A}\colon\Deftheta\to
  (\mathbf{P}^+(\Homologie{1}{\R}{\Tn{2}}))^2
 \end{equation*}
 being continuous according to
 \cite[Lemma 6.5]{mion-moutonRigiditySingularDeSitter2024},
 $\mzero_1$ and $\mzero_2$ can then be separated in $\Deftheta$
 by the pre-images of open subsets
 separating $\mathcal{A}(\mzero_1)$ and $\mathcal{A}(\mzero_2)$.
 This shows that $\Deftheta^A$
 is Hausdorff and concludes the proof.
\end{proof}

\begin{remark}
 \label{remark-coordonneesdiff}
 The coordinates
 $\mu_{\theta,x,y}$ constructed on $\Deftheta^A$
 from lightlike informations
 in \cite[Lemma 9.4]{mion-moutonRigiditySingularDeSitter2024}
 (see also
 \cite[Figure 4.2 and \S 6.2]{mion-moutonRigiditySingularDeSitter2024})
 and the length-twist coordinates
 $\mathcal{L}_\azero\times\Theta_{\azero,\bzero}$
 that we just constructed,
 have real-analytical coordinate changes.
 Consequently, both of these
 atlases define the same natural
 real-analytic structure on $\Deftheta^A$.
 \par The unique simple closed timelike geodesic
 of $\mathcal{T}_{\theta,x,y}$ in a given
 timelike free homotopy class
 $c$ is indeed the projection of a disjoint union
 of timelike arcs $\delta_c$
 in $\mathcal{L}_{\theta,x,y}$,
 whose finite number is locally constant in $c$.
 One may now observe that these arcs vary
 real-analytically in $(x,y)$.
 This shows that both the length
 and the twist coordinates are real-analytic
 in $(x,y)$.
\end{remark}

\section{Rigidity of two timelike lengths}
\label{section-rigiditytimelikeMLS}
We prove in this section Theorem
\ref{theoremintro-rigidityMLS}.
Let $\m_1$ and $\m_2$ be two class A
 de-Sitter structures
 of $\Tn{2}$
 of equal area $\theta$ and
 having a unique singularity,
 giving the same lengths
\begin{equation*}
 \label{equation-memelongueuerbasetemps}
 (\mathcal{L}_{\azero}(\mzero_1),\mathcal{L}_{\bzero}(\mzero_1))
=(\mathcal{L}_{\azero}(\mzero_2),\mathcal{L}_{\bzero}(\mzero_2))
\eqqcolon(l,k)
 \end{equation*}
to a common basis of timelike homotopy classes
$(\azero,\bzero)$.
\par \textbf{Step 1.}
With $\gamma_\azero$
the unique geodesic
of $\m_1$
in the free
homotopy class $\azero$,
we first observe that
\emph{$\mzero_2$
is a twist of $\mzero_1$ around $\gamma_\azero$}
in the following sense.
\par Let $A$ be the singular $\dS$-annulus
with two timelike geodesic boundary components
$\gamma_\azero^\pm$ obtained
by cuting the singular $\dS$-torus
$T=(\Tn{2},\m_1)$ along $\gamma_\azero$.
We name the left (respectively right)
boundary component of $A$ by
$\gamma_\azero^-$ (resp. $\gamma_\azero^+$)
with respect to the orientation of $A$.
The definition of $A$ from $T$
comes with a unique map
$\iota\colon\gamma_\azero^+\to\gamma_\azero^-$
between the boundary components
such that the quotient $\bar{A}$ of $A$
by the identification
$p\in\gamma_\azero^+\sim \iota(p)\in\gamma_\azero^-$
identifies with the original singular $\dS$-torus $T$.
We denote by $\azero_A$
the free homotopy class
of $\gamma_\azero^-$ in $A$.
To encode the isotopy class $\mzero_1$,
we need to also
keep track of the free homotopy class
$\bzero$ in $A$.
Let $\bzero_A$ be the unique class of simple arcs
$\delta\colon\intervalleff{0}{1}\to A$
going from points $\delta(0)\in\gamma_\azero^+$
to their image $\iota(\delta(0))=\delta(1)\in\gamma_\azero^-$
modulo free homotopy fixing the base points,
such that the simple closed curve $\bar{\delta}$
obtained in $\bar{A}\equiv T$ by projecting
any arc $\delta$ in $\bzero_A$
is freely homotopic to $\bzero$.
Identifying $\gamma_\azero^-$ with $\R/l\Z$
through a unit speed parametrization,
we denote by $R_u$ the rotation
$p\in\gamma_\azero^-\mapsto p+u\in\gamma_\azero^-$
for any $u\in\intervallefo{0}{l}$.
The quotient $T_u$
of $A$ by the identification
$p\in\gamma_\azero^+\sim R_u\circ\iota(p)\in\gamma_\azero^-$
of its boundary components is a $\dS$-torus
of area $\theta$ with a unique singular point.
It is endowed with the primitive free homotopy
class $\azero_u$ induced by $\azero_A$.
The concatenation of any arc $\delta$ in the
class $\bzero_A$ with the segment
$\intervalleff{\delta(1)}{\delta(1)+u}_{\gamma_\azero^-}$
of $\gamma_\azero^-$
projects in $T_u$ to a simple closed curve
whose free homotopy
class does not depend on $\delta$ and is denoted
by $\bzero_u$.
We denote by $(\mzero_1)_u$
the point of
$\Deftheta^A$ defined by the pullback
of the singular $\dS$-structure of $T_u$ by any homeomorphism
from $\Tn{2}$ to $T_u$
sending $(\azero,\bzero)$ to $(\azero_u,\bzero_u)$
and $\zero$ to the unique singular point of $T_u$.
\par Since $\mathcal{L}_{\azero}(\mzero_1)=
\mathcal{L}_{\azero}(\mzero_2)=l$,
$\mzero_1$ and $\mzero_2$ are identification spaces
of the same rectangle $\mathcal{R}_{\theta,l}$
according to
Lemma \ref{lemma-decoupagetoredeSitter},
the timelike edges $I$ and $J$ of $\mathcal{R}_{\theta,l}$
projecting to the unique simple closed timelike geodesic
of $\m_1$ (respectively $\m_2$) in the class $\azero$.
There exists thus $n\in\Z$ and $u\in\intervallefo{0}{l}$
such that
\begin{equation*}
 \label{equation-g2g1}
 \mzero_2=(D_\azero^n)_*(\mzero_1)_{u},
\end{equation*}
with $D_\azero$ the positive Dehn twist around $\azero$.
Note that, possibly exchanging the roles of $\mzero_1$ and $\mzero_2$,
we can assume without loss of generality that $n\geq0$.
Identifying henceforth $\mzero_2$ and $(D_\azero^n)_*(\mzero_1)_{u}$,
the unique simple closed
timelike geodesic
of $\m_2$
in the free homotopy class $\bzero$
appears in $A$ as a simple timelike geodesic arc
$\delta\colon\intervalleff{0}{1}\to A$
going from
$p_+\coloneqq\delta(0)\in\gamma_\azero^+$ to
$p_-\coloneqq\delta(1)\in\gamma_\azero^-$.
Since the projection of $\delta$
in $(\Tn{2},(D_\azero^n)_*(\mzero_1)_{u})$
belongs to the free homotopy class $\bzero$,
the concatenation
\begin{equation*}
 \label{equation-deltaA}
 \delta\cdot\intervalleff{p_-}{p_-+u}_{\gamma_\azero^-}
\end{equation*}
belongs to the free homotopy class $\bzero_A-n\azero_A$
of arcs in $A$.
We also denote by $\gamma_\bzero\colon\intervalleff{0}{1}\to A$
the arc of $A$
in the class $\bzero_A$
induced by the unique simple closed timelike geodesic
of $T$ in the class $\bzero$.
Let $E$ be the universal cover of $A$ endowed
with the singular $\dS$-structure induced by $A$.
This is a singular $\dS$-surface
homeomorphic to a band
$\intervalleff{0}{1}\times\R$,
with two timelike geodesic components
$\tilde{\gamma}_\azero^-\simeq\{0\}\times\R$
and $\tilde{\gamma}_\azero^+\simeq\{1\}\times\R$
which are the universal covers of the
geodesic boundary components $\gamma_\azero^\pm$ of $A$.
Fixing a lift $\tilde{p}_+\in\tilde{\gamma}_\azero^+$
of $p_+$ in $E$,
we consider the lift $\tilde{\delta}$ of $\delta$ starting
from $\tilde{p}_+$
and denote by $\tilde{p}_-$ its endpoint.

\par \textbf{Step 2.} \emph{Assume by contradiction that $n\neq0$.}
The arc $\delta$ has then $n$ intersection points
with $\gamma_\bzero\setminus\gamma_\azero^-$,
which we denote by $(p_1,\dots,p_n)$
in the increasing order in which they
are met on $\delta$ with its future orientation.
With $\tilde{p}_i\in\tilde{\delta}$
the respective lifts of the $p_i$'s
on $\tilde{\delta}$,
let $\tilde{\gamma}_\bzero^i$ be the lift
of $\gamma_\bzero$ passing through $\tilde{p}_i$,
and $q_i\in\tilde{\gamma}_\azero^+$
be the starting point of $\tilde{\gamma}_\bzero^i$.
These constructions are illustrated
on the right-hand side of Figure \ref{figure-preuven0}
for $n=3$.
Observe that it may be that
$p_+\in\gamma_\bzero$ \emph{i.e.} that
$p_+=p_1$,
in which case $\tilde{p}_+=\tilde{p}_1=q_1$
(this situation is illustrated on the left-hand side
of Figure \ref{figure-preuven0}).
Since $\azero$ is a timelike free homotopy class of $\mzero_1$,
there exists according to
\cite[Proposition A.8.(1)]{mion-moutonRigiditySingularDeSitter2024}
a simple closed timelike curve $\sigma_i$ of $A$
passing through $p_i$
and avoiding the singularity
in the free homotopy class
$\azero_A$.\footnote{Though the latter claim is not explicitly stated
in this way,
it is a byproduct of the proof
of \cite[Proposition A.8.(1)]{mion-moutonRigiditySingularDeSitter2024}.}
Let $\tilde{\sigma}_i$ be the lift of $\sigma_i$
starting at $\tilde{p}_i$,
and $\tilde{p}'_i$ be the endpoint of $\tilde{\sigma}_i$.
\begin{figure}[!h]
	\begin{center}
		\def\svgwidth{1.16 \columnwidth}
			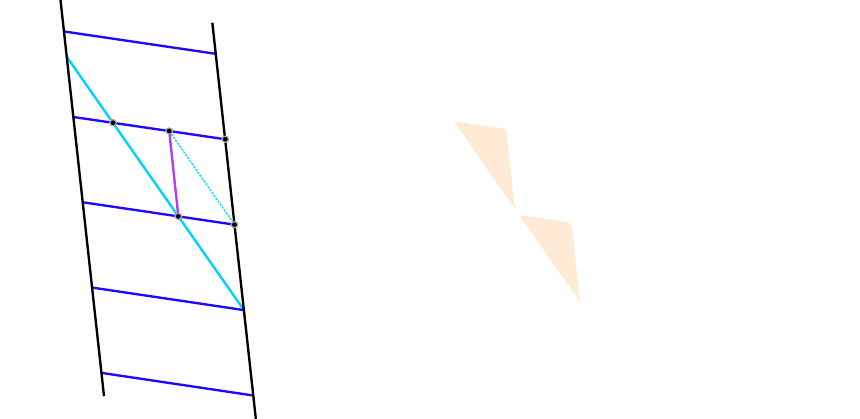
	\end{center}
	\caption{Lifts of the arcs
	$\textcolor{DodgerBlue}{\delta}$,
	$\textcolor{Blue}{\gamma_\bzero}$
	and $\gamma_\azero^\pm$
	in the universal cover $E$ of the annulus $A$
	(for $n=3$).}
			\label{figure-preuven0}
\end{figure}

\par For any $i=1,\dots,n-1$, the three future timelike segments
$\tilde{\sigma}_i$,
$\intervalleff{\tilde{p}'_i}{\tilde{p}_{i+1}}_{\tilde{\gamma}^{i+1}_\bzero}$
and
$\intervalleff{\tilde{p}_i}{\tilde{p}_{i+1}}_{\tilde{\delta}}$
bound a closed triangle $P_i\subset E$
containing at most one singularity
(in its interior),
illustrated in Figure \ref{figure-preuven0}.
There exists therefore an embedding of an open
neighbourhood $U_i$ of $\tilde{P}_i$
in $\dStilde$ or in $\dSsingularthetatilde$.
This is a consequence of
\cite[Lemma 3.5]{mion-moutonRigiditySingularDeSitter2024},
and of the fact that any class A $\dS$-torus
is isometric to a torus
$\mathcal{T}_{\theta,x,y}$ described
in Figure \ref{figure-gluingLshaped}
according to
\cite[Theorem 9.6.(1)]{mion-moutonRigiditySingularDeSitter2024}.
In particular,
any future timelike geodesic segment
in $U_i$ from $x$ to $y$
maximizes the length
of future causal curves in $U_i$ going from $x$ to $y$
according to Proposition
\cite[Proposition A.12]{mion-moutonRigiditySingularDeSitter2024}.
Since
$\tilde{\sigma}_i\cdot\intervalleff{\tilde{p}'_i}{\tilde{p}_{i+1}}_{\tilde{\gamma}^{i+1}_\bzero}$
is a future causal curve in $U_i$ from
$\tilde{p}_i$ to $\tilde{p}'_{i+1}$,
we have therefore the following reverse
triangle inequality:
\begin{equation}
 \label{equation-ingalitetrianginverse1}
 L\left(\intervalleff{\tilde{p}_i}{\tilde{p}_{i+1}}_{\tilde{\delta}}\right)
 \geq
 L\left(\intervalleff{\tilde{p}'_i}{\tilde{p}_{i+1}}_{\tilde{\gamma}^{i+1}_\bzero}\right)
 +L\left(\tilde{\sigma}_i\right)
 >L\left(\intervalleff{\tilde{p}'_i}{\tilde{p}_{i+1}}_{\tilde{\gamma}^{i+1}_\bzero}\right).
\end{equation}
The second strict inequality is due to the fact that
$L\left(\tilde{\sigma}_i\right)>0$ since it is a
non-trivial timelike curve.
Let $q_n^-\in\tilde{\gamma}_\azero^-$ denote the endpoint of
$\tilde{\gamma}^{n}_\bzero$.
Then by the same arguments as before,
the three timelike segments
$\intervalleff{\tilde{p}_n}{q_n^-}_{\tilde{\gamma}^{n}_\bzero}$,
$\intervalleff{q_n^-}{\tilde{p}_-}_{\tilde{\gamma}^-_\azero}$
and
$\intervalleff{\tilde{p}_n}{\tilde{p}_-}_{\tilde{\delta}}$
bound a closed triangle $\tilde{P}_n\subset E$
containing at most one singularity,
in which timelike geodesic segments
maximize thus the causal lengths.
Therefore
\begin{equation}
 \label{equation-ingalitetrianginverse2}
 L\left(\intervalleff{\tilde{p}_n}{\tilde{p}_-}_{\tilde{\delta}}\right)
 \geq
 L\left(\intervalleff{\tilde{p}_n}{q_n^-}_{\tilde{\gamma}^{n}_\bzero}\right)+
 L\left(\intervalleff{q_n^-}{\tilde{p}_-}_{\tilde{\gamma}^-_\azero}\right)
 >L\left(\intervalleff{\tilde{p}_n}{q_n^-}_{\tilde{\gamma}^{n}_\bzero}\right)
\end{equation}
as previously.
If $p_+\notin\gamma_\bzero$
\emph{i.e.} $\tilde{p}_1\neq\tilde{p}_+$,
we also have to consider
the triangle $\tilde{P}_0\subset E$
bounded by the timelike segments
$\intervalleff{\tilde{p}_+}{q_1}_{\tilde{\gamma}^+_\azero}$,
$\intervalleff{q_1}{\tilde{p}_1}_{\tilde{\gamma}^1_\bzero}$
and
$\intervalleff{\tilde{p}_+}{\tilde{p}_1}_{\tilde{\delta}}$
(this case is illustrated on the right-hand side
of Figure \ref{figure-preuven0}).
This
yields as previously the inequality
\begin{equation}
 \label{equation-ingalitetrianginverse3}
 L\left(\intervalleff{\tilde{p}_+}{\tilde{p}_1}_{\tilde{\delta}}\right)
 \geq
 L\left(\intervalleff{q_1}{\tilde{p}_1}_{\tilde{\gamma}^1_\bzero}\right)+
 L\left(\intervalleff{\tilde{p}_+}{q_1}_{\tilde{\gamma}^+_\azero}\right)
 >L\left(\intervalleff{q_1}{\tilde{p}_1}_{\tilde{\gamma}^1_\bzero}\right).
\end{equation}
Observe now that
$L\left(\intervalleff{\tilde{p}'_i}{\tilde{p}_{i+1}}_{\tilde{\gamma}^{i+1}_\bzero}\right)
=L\left(\intervalleff{p_i}{p_{i+1}}_{\gamma_\bzero}\right)$
for $i=1,\dots,n-1$,
$L\left(\intervalleff{\tilde{p}_n}{q_n^-}_{\tilde{\gamma}^{n}_\bzero}\right)
=L\left(\intervalleff{p_n}{\gamma_\bzero(1)}_{\gamma_\bzero}\right)$
and
$L\left(\intervalleff{q_1}{\tilde{p}_1}_{\tilde{\gamma}^1_\bzero}\right)=
L\left(\intervalleff{\gamma_\bzero(0)}{p_1}_{\gamma_\bzero}\right)$.
Therefore,
the sum
of inequalities \eqref{equation-ingalitetrianginverse1},
\eqref{equation-ingalitetrianginverse2} and
\eqref{equation-ingalitetrianginverse3}
gives
\begin{equation*}
 \label{equation-contradictionstep2}
 \mathcal{L}_\bzero(\mzero_2)=L(\delta)>
 L(\gamma_\bzero)=\mathcal{L}_\bzero(\mzero_1)
\end{equation*}
which contradicts our original assumption.
This contradiction shows that \emph{n=0}.
\par \textbf{Step 3, case A:}
\underline{$p_+\in\gamma_\bzero$},
\emph{i.e.} $\delta(0)=\gamma_\bzero(0)$.
Then since
\begin{equation}
 \label{equation-extremitesdelta}
 \delta(1)+u=\iota(\delta(0))=\gamma_\bzero(1),
\end{equation}
the timelike segments
$\tilde{\delta}$,
$\intervalleff{\tilde{p}_-}{\tilde{p}_-+u}_{\tilde{\gamma}_\azero^-}$
and $\tilde{\gamma}_\bzero$
bound in $E$ a triangle to which we can apply
our previous argument.
This shows that
$L(\tilde{\delta})\geq
L\left(\tilde{\gamma}_\bzero\right)
+L\left(\intervalleff{\tilde{p}_-}{\tilde{p}_-+u}_{\tilde{\gamma}_\azero^-}\right)$, hence that $u=0$
since $L(\delta)=L(\gamma_\bzero)$.
\par \textbf{Step 3, case B:} \underline{$p_+\notin\gamma_\bzero$}.
In this case, the arcs $\delta$
and $\gamma_\bzero$ bound together
with $\gamma_\azero^\pm$
two rectangles with timelike edges in $A$,
only of them containing the unique singularity of $A$
(in its interior).
Without lost of generality,
we assume that the closed rectangle $P\subset A$
``above $\gamma_\bzero$''
does not contain the singularity.
This situation is represented in Figure
\ref{figure-rectangleGaussBonnet}.
\begin{figure}[!h]
	\begin{center}
		\def\svgwidth{0.76 \columnwidth}
			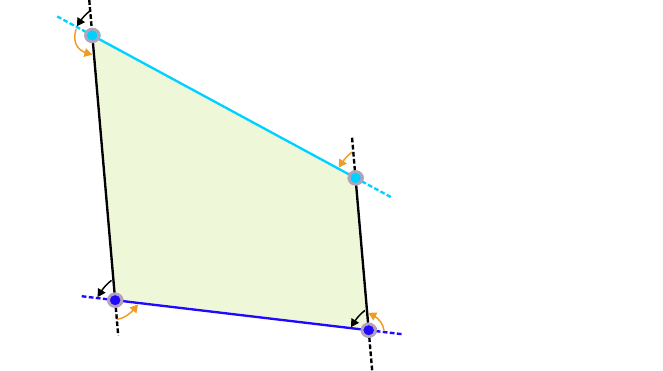
	\end{center}
	\caption{Rectangle $P\subset\dS$ bounded
	by $\textcolor{DodgerBlue}{\delta}$,
	$\textcolor{Blue}{\gamma_\bzero}$
	and $\gamma_\azero^\pm$,
	and its \textcolor{orange}{exterior angles}
	at vertices $v_i$.}
			\label{figure-rectangleGaussBonnet}
\end{figure}

We finally make use of an additional property of $\delta$
and $\gamma_\bzero$
that we had not mentioned until now.
Since $\gamma_\bzero$ derives from a
simple closed geodesic of $T$,
its angles
with $\gamma_\azero^-$ and with $\gamma_\azero^+$ are equal:
\begin{equation*}
 \label{equation-anglegammabgammaa}
 \alpha\coloneqq\angleLorentzien{\gamma_\azero^+}{\gamma_\bzero}_{\gamma_\bzero(0)}
 =\angleLorentzien{\gamma_\azero^-}{\gamma_\bzero}_{\gamma_\bzero(1)}.
\end{equation*}
In the same way,
$\delta$ projects by construction
to a simple closed geodesic of $(\mzero_1)_u=\mzero_2$.
Since the gluing
$R_u\circ\iota\colon\gamma_\azero^+\to\gamma_\azero^-$
is made by isometries,
the angles of $\delta$ with $\gamma_\azero^-$ and $\gamma_\azero^+$
are also equal:
\begin{equation*}
 \label{equation-angledeltagammaa}
 \beta\coloneqq\angleLorentzien{\gamma_\azero^+}{\delta}_{\delta(0)}
 =\angleLorentzien{\gamma_\azero^-}{\delta}_{\delta(1)}.
\end{equation*}
The exterior angles at $v_1=\delta(1)$, $v_2=\gamma_\bzero(1)$,
$v_3=\gamma_\bzero(0)$ and $v_4=\delta(0)$
are thus respectively equal to
$\nu_1=-\beta$, $\nu_2=-\alpha$, $\nu_3=\beta$
and $\nu_4=\alpha$
according to the relations \eqref{equation-relationsangle}.
They are indicated in orange in
Figure \ref{figure-rectangleGaussBonnet}.
The Gauss-Bonnet formula
of Proposition \ref{proposition-GaussBonnetBirmanNomizu}
implies then that the area of $P$
vanishes, namely that $P$ has empty interior.
Since $\delta$ and $\gamma_\bzero$ have non-empty interior,
this shows that $\delta=\gamma_\bzero$,
hence that $u=0$ according to \eqref{equation-extremitesdelta}.
Finally $n=u=0$ in both cases
\emph{i.e.} $\mzero_1=\mzero_2$,
which concludes the proof.

% \bibliographystyle{alpha}
% \bibliography{rigiditespectremarquetemps-biblio.bib}
% \section*{Bibliography}
% \addcontentsline{toc}{section}{Bibliographie}
\printbibliography[title={Bibliography}]
% \addtocontents{toc}{\protect\setcounter{tocdepth}{0}}

\end{document}